\journalname{JOTA}
\def\R{\mathbb{R}}
\def\E{\mathbb{E}}
\def\ba{\begin{array}}
\def\ea{\end{array}}
\newcommand\alert[1]{{#1}}
\newcommand\balert[1]{{#1}}
\begin{document}

\title{Smoothness Parameter of Power of Euclidean Norm
\thanks{Communicated by Liqun Qi.}}


\author{Anton Rodomanov  \and  Yurii Nesterov}

\institute{Anton Rodomanov  \at
             \balert{ICTEAM, Catholic University of Louvain,
             Louvain-la-Neuve, Belgium}\\
              anton.rodomanov@uclouvain.be
           \and
           Yurii Nesterov   \at
              \balert{Center for Operations Research and
              Economics, Catholic University of Louvain,
              Louvain-la-Neuve, Belgium}\\
              yurii.nesterov@uclouvain.be
}

\date{%
  Received: 29 July 2019 / Accepted: 6 March 2020
  / Published online: 27 March 2020 \\
  \textcopyright{} The Author(s) 2020
}

\def\subclassname{{\bfseries Mathematics Subject Classification}\enspace}
\headerboxheight=60pt
\def\makeheadbox{\noindent\small
  Journal of Optimization Theory and Applications (2020) 185:303--326 \\
  \url{https://doi.org/10.1007/s10957-020-01653-6}
}

\maketitle

\begin{abstract}
In this paper, we study derivatives of powers of Euclidean
norm. We prove their H\"older continuity and establish
explicit expressions for the corresponding constants. We
show that these constants are optimal for odd derivatives
and at most two times suboptimal for the even ones. In the
particular case of integer powers, when the H\"older
continuity transforms into the Lipschitz continuity, we
improve this result and obtain the optimal constants.
\end{abstract}

\keywords{H\"older continuity \and
    polynomials \and optimal constants}
\subclass{26A16 \and  46G05  \and  11C08}


\section{Introduction}

Starting from the paper \cite{nesterov2006cubic}, there has been an increasing interest in
the \emph{cubic regularization} of Newton's method (see, for example,
\cite{nesterov2008accelerating,cartis2011adaptive1,cartis2011adaptive2,%
carmon2016gradient,kohler2017sub,cartis2018global,doikov2018randomized}),
which has some attractive global worst-case complexity
guarantees. The main idea of this method is to approximate
the objective function with its second-order Taylor
approximation, add to it the cube of Euclidean norm with
certain coefficient and then minimize the result to obtain
a new point.

A natural generalization of this approach consists in considering a general high-order Taylor
approximation together with a certain high-order power of Euclidean norm as a regularizer.
This leads to \emph{tensor methods}
\cite{schnabel1991tensor,baes2009estimate,cartis2017improved,gasnikov2019near},
that have recently gained their popularity after it was shown in \cite{nesterov2015implementable}
that one step of the third-order tensor method for minimizing convex functions is comparable with
that of the cubic Newton method.

For some applications, involving functions with H\"older
continuous derivatives, it may also be reasonable to
regularize the models with \emph{fractional} degrees of
the Euclidean norm, as discussed in
\cite{grapiglia2017regularized} and
\cite{grapiglia2019tensor}.

The efficiency of all the aforementioned methods strongly
depends on our possibilities in solving the corresponding
auxiliary problems, that arise at each iteration.
Therefore, it is important to be able to quickly solve
minimization problems regularized by powers of Euclidean
norm.

Two of the most important characteristics of the objective
function, that influence the convergence rate of
minimization algorithms, are the constants of uniform
convexity and H\"older continuity of derivatives. It is
thus important to know these parameters for powers of
Euclidean norm in order to justify the convergence rates
of the related minimization algorithms.

The uniform convexity of powers of Euclidean norm was first investigated
in \cite{vladimirov1978uniformly}, where the authors obtained optimal constants
for all \emph{integer} powers. This result was then generalized to arbitrary \emph{real}
powers in \cite[Lemma~5]{doikov2019minimizing}. Thus, the question of uniform convexity
is completely solved.

\balert{The question of the H\"older continuity of
derivatives of powers of Euclidean norm is more subtle.
There exist only partial results for some special powers.
For example, for any real power between one and two, the
H\"older continuity of the first derivative follows from the
duality between uniform convexity and H\"older smoothness
(see \cite[Lemma~1]{nesterov2015universal}). For any real
power between two and three, the H\"older continuity of the
second derivative has recently been proved in
\cite[Example~2]{doikov2019minimizing}, where some
suboptimal constants have been obtained. However, there are
currently no general results for an arbitrary power.}

Thus, establishing H\"older continuity of derivatives of powers
of Euclidean norm and estimating the corresponding constants
is still an open problem and constitutes the main topic of this work.

This paper is organized as follows. \balert{In
Section~\ref{section:notation}, we introduce notation and
recall important facts on the norm of symmetric
multilinear operators.}

In Section~\ref{section:formula_for_derivatives}, we
derive a general formula for derivatives of powers of
Euclidean norm
(Theorem~\ref{theorem:formula_for_derivative}). The main
object in this formula is a certain family of recursively
defined polynomials
(Definition~\ref{definition:polynomials}). We give the
corresponding definition and provide several examples.

\balert{In Sections~\ref{section:main_properties_of_polynomials}
and \ref{section:holder_constants_of_polynomials}, we
study these polynomials in more detail. We establish
useful identities and prove several important properties
such as symmetry (Proposition~\ref{proposition:parity}),
non-negativity} (Proposition~
\ref{proposition:nonnegativity_of_polynomials})
and monotonicity
(Proposition~\ref{proposition:monotonicity_of_polynomials}).
Section~\ref{section:holder_constants_of_polynomials} is
devoted to estimating the H\"older constants of the
polynomials. The main results in this section are
Theorem~\ref{theorem:extending_holder_continuity_for_polynomials}
and Theorem~\ref{theorem:holder_constant_of_polynomials}.

In Section~\ref{section:holder_contiuity_of_derivatives},
we apply the auxiliary results obtained in the previous
sections for proving H\"older continuity of derivatives of
powers of Euclidean norm. Namely, in \balert{Theorem~
\ref{theorem:lower_bound_for_holder_constant}, we
derive a lower bound for the possible values of H\"older
constants. In Theorem~\ref{theorem:holder_continuity_on_line}, we prove
H\"older continuity of the derivatives along the lines
passing through the origin. Finally, in
Theorem~\ref{theorem:holder_continuity_on_whole_space}, we
extend this result onto the whole space and discuss the
optimality of the constants.

In the final Section~\ref{section:lipschitz_constants_of_derivatives},
we show how to improve our general result for integer
powers, when the H\"older condition corresponds to the
Lipschitz condition.}

\section{Notation and Generalities}
\label{section:notation}

In this text, $\E$ is a finite dimensional real vector
space. Its dual space, composed of all linear functionals
on $\E$, is denoted by $\E^*$. The value of a linear
functional $s \in \E^*$, evaluated at a point $x \in \E$, is
denoted by $\langle s, x \rangle$. To introduce a
Euclidean norm $\| \cdot \|$ on $\E$, we fix a
self-adjoint positive definite operator $B : \E \to \E^*$
and define $\| x \| := \langle B x, x \rangle^{1/2}$.

\balert{For a function $f : G \to \R$, defined on an open set
$G$ in $\E$, and for an integer $p \geq 0$, the $p$-th
derivative of $f$, if exists, is denoted by $D^p f$. This
derivative is a mapping from $G$ to the space of symmetric
$p$-multilinear forms on $\E$.}

Let $L$ be a $p$-multilinear form on $\E$. Its value, evaluated
at $h_1, \dots, h_p \in \E$, is denoted by
$L[h_1, \dots, h_p]$. When $h_1 = \dots = h_p = h$ for
some $h \in \E$, we abbreviate this as $L[h]^p$. The norm
of $L$ is defined in the standard way:
$$
\ba{c}
\| L \| := \max\limits_{ \| h_1 \| = \dots = \| h_p \| = 1}
| L[h_1, \dots, h_p] |.
\ea
$$
If the form $L$ is symmetric, it is known that the maximum
in the above definition can be achieved when all the
vectors are the same:
\begin{equation}\label{eq:norm_of_symmetric_multilinear_operator}
\ba{c}
\| L \| = \max\limits_{ \| h \| = 1} | L[h]^p |
\ea
\end{equation}
(see, for example, Appendix 1 in \cite{nesterov1994interior}).

For $q \in \R$, by $f_q : \E \to \R$ we denote the $q$-th
power of the Euclidean norm:
$$
\ba{c}
f_q(x) := \| x \|^q.
\ea
$$
The main goal of this paper is to establish that, for any integer $p \geq 0$ and
any real $\nu \in [0, 1]$, the $p$-th derivative of $f_{p+\nu}$ is $\nu$-H\"older continuous:
$$
\ba{c}
\| D^p f_{p+\nu}(x_2) - D^p f_{p+\nu}(x_1) \| \leq A_{p, \nu} \| x_2 - x_1 \|
\ea
$$
for all $x_1, x_2 \in \E$, where $A_{p, \nu}$ is an
explicit constant dependent on $p$ and $\nu$.

\section{Derivatives of Powers of Euclidean Norm}
\label{section:formula_for_derivatives}

We start with deriving a general formula for derivatives
of the function $f_q$. The main objects in this formula
\balert{are univariate polynomials, defined below.}

\begin{definition}\label{definition:polynomials}
For each integer $p \geq 0$ and each $q \in \R$, we define a polynomial $g_{p, q} : \R \to \R$
as follows. When $p=0$, we set $g_{p, q}(\tau) := 1$. For all other $p \geq 1$,
$$
\ba{c}
g_{p, q}(\tau) := (1-\tau^2) g_{p-1, q}'(\tau) + (q-p+1) \tau g_{p-1, q}(\tau).
\ea
$$
\end{definition}

\balert{Each polynomial $g_{p, q}$ is a combination of the
previous polynomial $g_{p-1, q}$ and its derivative
$g_{p-1, q}'$. The first five polynomials can be written
explicitly:}
\balert{$$
\ba{rl}
g_{0, q}(\tau) &= 1, \quad g_{1, q}(\tau) \; = q
\tau, \quad g_{2, q}(\tau) \; = q [ (q-2) \tau^2 + 1 ], \\
g_{3, q}(\tau) &= q (q-2) [ (q-4) \tau^3 + 3 \tau ], \\
g_{4, q}(\tau) &= q (q-2) [ (q-4) (q-6) \tau^4 + 6 (q-4) \tau^2 + 3 ].
\ea
$$}

Let us now describe how derivatives of $f_q$ are related to polynomials $g_{p, q}$.

\begin{theorem}\label{theorem:formula_for_derivative}
For any real $q \in \R$, the function $f_q$ is $p$ times differentiable
for all integer $0 \leq p < q$. The corresponding derivatives are
\begin{equation}\label{eq:formula_for_derivative}
\ba{c}
D^p f_q(x)[h]^p = \| x \|^{q-p} g_{p, q}(\tau_h(x)),
\ea
\end{equation}
where $h \in \E$ is an arbitrary unit vector and
\begin{equation}\label{eq:definition_tau_h_x}
\ba{c}
\tau_h(x) :=
\begin{cases}
\frac{\langle B x, h \rangle}{\| x \|}, & \text{if $x \neq 0$}, \\
0, & \text{if $x = 0$}.
\end{cases}
\ea
\end{equation}
\end{theorem}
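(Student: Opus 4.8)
The plan is to proceed by induction on $p$, establishing simultaneously the differentiability of $f_q$ and the formula \eqref{eq:formula_for_derivative}. The base case $p=0$ is immediate since $D^0 f_q(x) = f_q(x) = \|x\|^q$ and $g_{0,q} \equiv 1$. For the inductive step, I would assume that for some integer $p$ with $1 \le p < q$ the function $f_q$ is $(p-1)$ times differentiable with $D^{p-1} f_q(x)[h]^{p-1} = \|x\|^{q-p+1} g_{p-1,q}(\tau_h(x))$, and then differentiate this identity once more in $x$.

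The technical heart of the argument is the following computation, valid for $x \ne 0$. First I would record the two elementary gradient formulas $D(\|x\|)[h] = \langle Bx, h\rangle / \|x\| = \tau_h(x)$ and, more importantly, the derivative of the scalar function $\tau_h$: differentiating $\tau_h(x) = \langle Bx, h\rangle \|x\|^{-1}$ in the direction $h$ gives $D(\tau_h(x))[h] = \langle Bh, h\rangle \|x\|^{-1} - \langle Bx, h\rangle^2 \|x\|^{-3} = (1 - \tau_h(x)^2)/\|x\|$, using that $h$ is a unit vector so $\langle Bh, h\rangle = 1$. Applying the product rule and the chain rule to $\|x\|^{q-p+1} g_{p-1,q}(\tau_h(x))$ and differentiating in the direction $h$ then yields
$$
\ba{c}
D^p f_q(x)[h]^p = (q-p+1)\|x\|^{q-p} \tau_h(x)\, g_{p-1,q}(\tau_h(x)) + \|x\|^{q-p+1} g_{p-1,q}'(\tau_h(x)) \cdot \frac{1 - \tau_h(x)^2}{\|x\|},
\ea
$$
and factoring out $\|x\|^{q-p}$ gives exactly $\|x\|^{q-p}\big[(1-\tau_h(x)^2) g_{p-1,q}'(\tau_h(x)) + (q-p+1)\tau_h(x)\, g_{p-1,q}(\tau_h(x))\big] = \|x\|^{q-p} g_{p,q}(\tau_h(x))$ by Definition~\ref{definition:polynomials}. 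Here I am using the standard fact that since $D^p f_q(x)$ is a symmetric $p$-multilinear form, it is determined by its values $D^p f_q(x)[h]^p$ on the diagonal, so it suffices to differentiate the diagonal expression in a single direction $h$.

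Two points require extra care. First, the above only handles $x \ne 0$; I must separately check differentiability at the origin, which is where the constraint $p < q$ becomes essential. Since $q - p > 0$, the bound $|D^p f_q(x)[h]^p - 0| = \|x\|^{q-p}|g_{p,q}(\tau_h(x))| \le C\|x\|^{q-p} \to 0$ as $x \to 0$ (the polynomial $g_{p,q}$ is bounded on the compact interval $|\tau| \le 1$, since $|\tau_h(x)| \le \|Bx\|_*\|h\|/\|x\| = 1$ by Cauchy–Schwarz), shows that the $p$-th derivative exists at $0$ and equals the zero form — consistent with \eqref{eq:formula_for_derivative} since $\|0\|^{q-p} = 0$. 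The main obstacle, then, is not any single computation but organizing the induction so that continuity of $D^p f_q$ on all of $\E$ (needed to legitimately call $f_q$ "$p$ times differentiable" and to pass from the directional derivative to the full multilinear form) is verified at each stage — in particular gluing the explicit formula on $\E \setminus \{0\}$ with the value at the origin. This is routine given $q - p > 0$ and the boundedness of $g_{p,q}$ on $[-1,1]$, but it is the step that must be written out carefully rather than the algebraic recursion, which falls out immediately from the definition of $g_{p,q}$.
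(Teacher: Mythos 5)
Your proposal is correct and follows essentially the same route as the paper: induction on $p$, the identical product/chain-rule computation off the origin using $D\|\cdot\|(x)[h]=\tau_h(x)$ and $D\tau_h(x)[h]=(1-\tau_h^2(x))/\|x\|$, and the same use of $q-p>0$ together with the boundedness of the polynomial on $[-1,1]$ (via $|\tau_h(x)|\leq 1$) to handle the origin. The only, harmless, difference is at $x=0$: the paper verifies the difference quotient directly, showing $\|D^{p-1}f_q(x)\|/\|x\|=\|x\|^{q-p}\max_{\|h\|=1}|g_{p-1,q}(\tau_h(x))|\to 0$, whereas you show $D^{p}f_q(x)\to 0$ and appeal to the standard mean-value/derivative-limit argument --- both rest on the same estimate $\|x\|^{q-p}\to 0$.
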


\begin{proof}
Note that $f_q$ is infinitely differentiable on $\E
\setminus \{0\}$ since its restriction on this set is a
composition of two infinitely differentiable functions,
namely the quadratic function $\E \setminus \{0\} \to \R:
x \mapsto \| x \|^2 = \langle B x, x \rangle$ and the
power function $]0, +\infty[ \to \R: t \mapsto t^{q/2}$.
Hence, we only need to prove that $f_q$ is also $p$ times
differentiable at the origin for any $0 \leq p < q$, and
that \eqref{eq:formula_for_derivative} holds.

\balert{We proceed by induction. The case $p=0$ is trivial
since, by definition, the zeroth derivative of a function is the function
itself, while $g_{0, q}(\tau) = 1$ for any $\tau
\in \R$. Let us assume that $p \geq 1$, and the claim is
proved for $p' := p-1$.}

First, let us justify \eqref{eq:formula_for_derivative}
for any $x \in \E \setminus \{0\}$. By the induction
hypothesis,
$$
\ba{c}
D^{p-1} f_q(x)[h]^{p-1} = \| x \|^{q-p+1} g_{p-1, q}(\tau_h(x))
\ea
$$
for all $x \in \E$. Differentiating, we obtain that
$$
\ba{c}
D \| \cdot \|(x)[h] = \tau_h(x), \qquad
D \tau_h(x)[h] = \frac{1 - \tau_h^2(x)}{ \| x \| }
\ea
$$
for all $x \in \E \setminus \{0\}$, and hence
$$
\ba{rl}
D^p f_q(x)[h]^p &=
\| x \|^{q-p+1} g_{p-1, q}'(\tau_h(x)) \frac{1 - \tau_h^2(x)}{ \| x \| } \\
&\qquad + (q-p+1) \| x \|^{q-p} \tau_h(x) g_{p-1, q}(\tau_h(x)) \\
&= \| x \|^{q-p} [ (1 - \tau_h^2(x)) g_{p-1, q}'(\tau_h(x))
+ \tau_h(x) g_{p-1, q}(\tau_h(x)) ] \\
&= \| x \|^{q-p} g_{p, q}(\tau_h(x)),
\ea
$$
where the last equality follows from Definition~\ref{definition:polynomials}.

Now let us show that $f_q$ is also $p$ times differentiable at the origin
with $D^p f_q(0) = 0$ (this is what \eqref{eq:formula_for_derivative} says when $x=0$).
By our inductive assumption, we already know that $D^{p-1} f_q(0) = 0$.
\balert{Therefore, according to the definition of
derivative, it remains to show that $\lim_{x \to 0; x \neq
0} \frac{ \| D^{p-1} f_q(x) \| }{\| x \| } = 0$}, or,
equivalently, in view of 
\eqref{eq:norm_of_symmetric_multilinear_operator}, that
$$
\ba{c}
\lim\limits_{x \to 0; x \neq 0} \max\limits_{\| h \| = 1}
\frac{ | D^{p-1} f_q(x)[h]^{p-1} | }{ \| x \| } = 0.
\ea
$$
Applying our inductive assumption, we obtain that
\begin{equation}\label{eq:formula_for_derivative_proof1}
\ba{c}
\max\limits_{\| h \| = 1} \frac{ | D^{p-1} f_q(x)[h]^{p-1} |
}{\| x \| } = \| x \|^{q-p} \max\limits_{\| h \| = 1} |g_
{p-1, q}(\tau_h(x))|
\ea
\end{equation}
\balert{for all $x \in \E \setminus \{0\}$. Since $p < q$, we
have $\| x \|^{q-p} \to 0$ as $x \to 0$. Thus, we need to
show that $|g_{p-1, q}(\tau_h(x))|$ is uniformly bounded for
all $x \in \E$ and all unit $h \in
\E$. Indeed, by Cauchy-Schwartz inequality, we have
$|\tau_h(x)| \leq 1$. Hence $|g_{p-1, q}(\tau_h(x))| \leq
\max_{[-1, 1]} |g_{p-1, q}|$.} The right-hand side in the
above inequality is finite, since a continuous function
always achieves its maximum on a compact interval.\qed
\end{proof}

\section{Main Properties of Polynomials}
\label{section:main_properties_of_polynomials}

Let us study the polynomials $g_{p, q}$ introduced in
Definition~\ref{definition:polynomials}. Our first
observation is that $g_{p, q}$, as a function, is always either even or odd.
\begin{proposition}\label{proposition:parity}
For any integer $p \geq 0$, and any $q \in \R$, $g_{p, q}$ has the same parity as $p$,
i.e. $g_{p, q}(-\tau) = (-1)^p g_{p, q}(\tau)$ for all $\tau \in \R$.
\end{proposition}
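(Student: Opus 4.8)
The plan is to prove the parity statement by induction on $p$, following the same recursive structure used in Definition~\ref{definition:polynomials}. The base case $p=0$ is immediate: $g_{0,q}(\tau) = 1$ is an even function, which matches the parity of $0$. For the inductive step, I would assume that $g_{p-1,q}(-\tau) = (-1)^{p-1} g_{p-1,q}(\tau)$ for all $\tau \in \R$ and seek to deduce the corresponding identity for $g_{p,q}$.

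The key computation is to substitute $-\tau$ into the recursive formula
$$
\ba{c}
g_{p,q}(-\tau) = (1-\tau^2) g_{p-1,q}'(-\tau) + (q-p+1)(-\tau) g_{p-1,q}(-\tau),
\ea
$$
using the fact that $1-(-\tau)^2 = 1-\tau^2$. Here I need two facts about $g_{p-1,q}$: first, the inductive hypothesis gives $g_{p-1,q}(-\tau) = (-1)^{p-1} g_{p-1,q}(\tau)$; second, differentiating this identity with respect to $\tau$ (via the chain rule on the left-hand side) yields $-g_{p-1,q}'(-\tau) = (-1)^{p-1} g_{p-1,q}'(\tau)$, i.e. $g_{p-1,q}'(-\tau) = (-1)^{p} g_{p-1,q}'(\tau)$. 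Plugging these in, the first term becomes $(-1)^p (1-\tau^2) g_{p-1,q}'(\tau)$ and the second becomes $(-1)^p (q-p+1)\tau g_{p-1,q}(\tau)$, so that $g_{p,q}(-\tau) = (-1)^p [(1-\tau^2) g_{p-1,q}'(\tau) + (q-p+1)\tau g_{p-1,q}(\tau)] = (-1)^p g_{p,q}(\tau)$, as desired.

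There is essentially no serious obstacle here; the only point requiring a moment's care is observing that the parity of a (polynomial) function is preserved under differentiation up to a sign flip — that is, the derivative of an even function is odd and vice versa — which is exactly what makes the two terms in the recursion transform with the same factor $(-1)^p$. Everything else is routine substitution. I would present the argument as a short, clean induction, explicitly noting the differentiation step since it is the one place where the reader might otherwise pause.
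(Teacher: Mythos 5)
Your proof is correct and is exactly the induction the paper has in mind (the paper simply states that the result ``easily follows from Definition~\ref{definition:polynomials} by induction''); your expanded version, including the observation that differentiating the inductive identity flips the parity of $g_{p-1,q}'$, supplies precisely the routine details the authors omitted.
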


\begin{proof}
Easily follows from Definition~\ref{definition:polynomials} by induction.\qed
\end{proof}

\balert{Next we establish identities with the first and
second derivatives of $g_{p, q}$.}

\begin{lemma}\label{lemma:formula_for_derivative_of_polynomial_1}
For any integer $p \geq 1$, and any $q, \tau \in \R$,
\begin{equation}\label{eq:formula_for_derivative_of_polynomial_1}
g_{p, q}'(\tau) = (1-\tau^2) g_{p-1, q}''(\tau) +
(q-p-1) \tau g_{p-1, q}'(\tau) + (q-p+1) g_{p-1, q}(\tau).
\end{equation}
\end{lemma}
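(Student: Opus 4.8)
The plan is to prove the identity by straightforward differentiation of the recursive definition of $g_{p,q}$, so the argument is direct rather than inductive. Starting from
$$
\ba{c}
g_{p, q}(\tau) = (1-\tau^2) g_{p-1, q}'(\tau) + (q-p+1) \tau g_{p-1, q}(\tau),
\ea
$$
I would differentiate both sides with respect to $\tau$. The first term contributes, by the product rule, $-2\tau\, g_{p-1,q}'(\tau) + (1-\tau^2) g_{p-1,q}''(\tau)$, and the second term contributes $(q-p+1) g_{p-1,q}(\tau) + (q-p+1)\tau\, g_{p-1,q}'(\tau)$. Collecting the coefficient of $\tau g_{p-1,q}'(\tau)$ gives $-2 + (q-p+1) = q-p-1$, which is exactly the coefficient appearing in \eqref{eq:formula_for_derivative_of_polynomial_1}; the $(1-\tau^2) g_{p-1,q}''(\tau)$ and $(q-p+1) g_{p-1,q}(\tau)$ terms match verbatim.

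Concretely, the steps are: (1) write down $g_{p,q}'(\tau) = \frac{d}{d\tau}\big[(1-\tau^2) g_{p-1,q}'(\tau)\big] + (q-p+1)\frac{d}{d\tau}\big[\tau g_{p-1,q}(\tau)\big]$; (2) apply the product rule to each bracket; (3) regroup the resulting five terms by the functions $g_{p-1,q}$, $g_{p-1,q}'$, $g_{p-1,q}''$; (4) observe the cancellation $-2\tau + (q-p+1)\tau = (q-p-1)\tau$ in front of $g_{p-1,q}'$. Since $g_{p-1,q}$ is a polynomial, it is as smooth as needed, so differentiating $g_{p-1,q}'$ to obtain $g_{p-1,q}''$ poses no issue; there is no subtlety about differentiability or domain here.

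I do not expect any genuine obstacle: this is a one-line computation once the recursion is differentiated, and the only thing to be careful about is the bookkeeping of the coefficient $q-p+1$ versus $q-p-1$ arising from the extra $-2$ produced by $\frac{d}{d\tau}(1-\tau^2) = -2\tau$. The mild "hard part," if any, is purely notational — making sure the index shift $p \mapsto p-1$ in the hypothesis is applied consistently, i.e. that the recursion being differentiated is the one defining $g_{p,q}$ in terms of $g_{p-1,q}$, and that no induction is needed because the claim is simply the derivative of that single identity.
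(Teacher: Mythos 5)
Your computation is correct and is exactly the paper's argument: the paper's proof simply states that the identity follows from Definition~\ref{definition:polynomials} by standard differentiation, which is precisely the product-rule calculation you carry out, with the coefficient $q-p-1$ arising from $-2+(q-p+1)$. No gap, and no difference in approach.
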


\begin{proof}
Follows from Definition~\ref{definition:polynomials} using standard
rules of differentiation.\qed
\end{proof}

\begin{lemma}\label{lemma:formula_for_qmpg}
For any integer $p \geq 0$, and any $q, \tau \in \R$,
$$
\ba{c}
(q-p) g_{p, q}(\tau) = \tau g_{p, q}'(\tau) + q g_{p, q-2}(\tau).
\ea
$$
\end{lemma}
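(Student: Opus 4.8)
The plan is to prove the identity by induction on $p$, mirroring the recursive structure of Definition~\ref{definition:polynomials}. The base case $p=0$ is immediate: the claimed identity reads $q \cdot 1 = 0 + q \cdot 1$ since $g_{0,q} = g_{0,q-2} = 1$ and $g_{0,q}' = 0$. For the inductive step, I would assume the identity holds for $p-1$ with the parameter $q$ and also — this is the key point — for $p-1$ with the parameter $q-2$, i.e.
\[
(q-2-(p-1)) g_{p-1,q-2}(\tau) = \tau g_{p-1,q-2}'(\tau) + (q-2) g_{p-1,q-4}(\tau).
\]
Actually, a cleaner route is to carry the induction with $q$ fixed but arbitrary, so that the inductive hypothesis automatically applies to any shifted argument like $q-2$.

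The main computation is then to expand $g_{p,q}$ and $g_{p,q}'$ using Definition~\ref{definition:polynomials} and Lemma~\ref{lemma:formula_for_derivative_of_polynomial_1} respectively, substitute into $\tau g_{p,q}'(\tau) + q g_{p,q-2}(\tau)$, and collect terms involving $g_{p-1,q}$, $g_{p-1,q}'$, $g_{p-1,q}''$, $g_{p-1,q-2}$, and $g_{p-1,q-2}'$. After this expansion I would apply the inductive hypothesis (Lemma~\ref{lemma:formula_for_qmpg} at level $p-1$ for parameters $q$ and $q-2$) to rewrite the terms $\tau g_{p-1,q}'$ and $\tau g_{p-1,q-2}'$, and then verify that everything collapses to $(q-p) g_{p,q}(\tau) = (q-p)[(1-\tau^2) g_{p-1,q}'(\tau) + (q-p+1)\tau g_{p-1,q}(\tau)]$. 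One subtlety: the term $\tau g_{p-1,q}''$ appears after differentiating, and this is not directly covered by the level-$(p-1)$ identity; however, differentiating the level-$(p-1)$ identity $(q-p+1) g_{p-1,q} = \tau g_{p-1,q}' + q g_{p-1,q-2}$ once more in $\tau$ yields $(q-p) g_{p-1,q}' = \tau g_{p-1,q}'' + q g_{p-1,q-2}'$, which handles exactly that term.

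The main obstacle will be bookkeeping: keeping straight the two instances of the inductive hypothesis (at parameters $q$ and $q-2$) together with the differentiated version, and making sure the coefficients $(q-p)$, $(q-p+1)$, $(q-p-1)$ that appear in Lemma~\ref{lemma:formula_for_derivative_of_polynomial_1} and Definition~\ref{definition:polynomials} combine correctly. I expect no conceptual difficulty — the identity is ``linear'' in the polynomials and their derivatives — but the algebra must be organized carefully, for instance by grouping the final expression by the five basis quantities $g_{p-1,q}, g_{p-1,q}', g_{p-1,q}'', g_{p-1,q-2}, g_{p-1,q-2}'$ and checking that the coefficient of each matches on both sides. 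An alternative, possibly cleaner, approach would be to use the formula $D^p f_q(x)[h]^p = \|x\|^{q-p} g_{p,q}(\tau_h(x))$ from Theorem~\ref{theorem:formula_for_derivative}: differentiating the evident identity $f_q(x) = \|x\|^2 f_{q-2}(x)$ via the Leibniz rule and reading off the top-order term in $h$ should produce the claimed relation directly, but I would still need to track the lower-order contributions, so the inductive computation is likely the most self-contained.
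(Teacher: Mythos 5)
Your plan is correct and follows essentially the same route as the paper's proof: induction on $p$, expanding via Definition~\ref{definition:polynomials} and Lemma~\ref{lemma:formula_for_derivative_of_polynomial_1}, and using the differentiated level-$(p-1)$ identity $(q-p)\,g_{p-1,q}' = \tau g_{p-1,q}'' + q\,g_{p-1,q-2}'$ to absorb the second-derivative term, exactly as the paper does. One small remark: the instance of the inductive hypothesis at parameter $q-2$ turns out not to be needed, since the $g_{p-1,q-2}'$ term arises with a factor $(1-\tau^2)$ and is cancelled by the differentiated identity rather than by a separate application of the hypothesis.
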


\begin{proof}
We proceed by induction on $p$. For $p=0$, by Definition~\ref{definition:polynomials},
we have $(q-p) g_{p, q}(\tau) = q$ while $\tau g_{p, q}'(\tau) = 0$ and
$q g_{p, q-2}(\tau) = q$, so the claim is obviously true. Now let us prove the claim
for $p \geq 1$, assuming that it is already true for all integer $0 \leq p' \leq p-1$.
By Definition~\ref{definition:polynomials}, we have
$$
\ba{c}
(q-p) g_{p, q}(\tau) = (q-p) ( (1-\tau^2) g_{p-1, q}'(\tau) + (q-p+1) \tau g_{p-1, q}(\tau) ).
\ea
$$
Rearranging, we obtain
$$
\ba{rl}
(q-p) g_{p, q}(\tau) = & (q-p-1) \tau (q-p+1) g_{p-1, q}(\tau) \\
& + (1-\tau^2) (q-p) g_{p-1, q}'(\tau) + (q-p+1) \tau
g_{p-1, q}(\tau).
\ea
$$
By the induction hypothesis, applied for $p' := p-1$, we have
$$
\ba{c}
(q-p+1) g_{p-1, q}(\tau) = \tau g_{p-1, q}'(\tau) + q g_{p-1, q-2}(\tau).
\ea
$$
for all $\tau \in \R$. Differentiating both sides, we obtain from this that
$$
\ba{c}
(q-p) g_{p-1, q}'(\tau) = \tau g_{p-1, q}''(\tau) + q g_{p-1, q-2}'(\tau).
\ea
$$
Combining the above three formulas, we see that
\begin{equation}\label{eq:formula_for_qmpg:proof1}
\ba{rl}
(q-p) g_{p, q}(\tau) = & (q-p-1) \tau ( \tau g_{p-1, q}' + q g_{p-1, q-2}(\tau) ) \\
& + (1-\tau^2) ( \tau g_{p-1, q}''(\tau) + q g_{p-1, q-2}'(\tau) ) \\
& + (q-p+1) \tau g_{p-1, q}(\tau).
\ea
\end{equation}

At the same time, by Lemma~\ref{lemma:formula_for_derivative_of_polynomial_1}, we have
$$\balert{
\tau g_{p, q}'(\tau) = (1-\tau^2) \tau g_{p-1, q}''(\tau)
+ (q-p-1) \tau^2 g_{p-1, q}'(\tau) + (q-p+1) \tau g_{p-1, q}(\tau),
}$$
and, by Definition~\ref{definition:polynomials}, we also have
$$
\ba{c}
q g_{p, q-2}(\tau) = (1-\tau^2) q g_{p-1, q-2}'(\tau) + (q-p-1) \tau q g_{p-1, q-2}(\tau).
\ea
$$
Summing the above two identities, we obtain the right-hand side of
\eqref{eq:formula_for_qmpg:proof1}.\qed
\end{proof}

\begin{lemma}\label{lemma:formula_for_derivative_of_polynomial_2}
For any integer $p \geq 1$, and any $q, \tau \in \R$,
$$
\ba{c}
g_{p, q}'(\tau) = (1-\tau^2) g_{p-1, q}''(\tau)
+ (q-p) \tau g_{p-1, q}'(\tau) + q g_{p-1, q-2}(\tau).
\ea
$$
\end{lemma}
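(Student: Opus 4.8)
The plan is to derive the claimed identity for $g_{p,q}'$ by combining the two auxiliary results already established, namely Lemma~\ref{lemma:formula_for_derivative_of_polynomial_1} and Lemma~\ref{lemma:formula_for_qmpg}. The key observation is that Lemma~\ref{lemma:formula_for_derivative_of_polynomial_1} already expresses $g_{p,q}'(\tau)$ as
$$
\ba{c}
(1-\tau^2) g_{p-1, q}''(\tau) + (q-p-1) \tau g_{p-1, q}'(\tau) + (q-p+1) g_{p-1, q}(\tau),
\ea
$$
so the only discrepancy between this and the target expression is in the last two terms: we have $(q-p-1)\tau g_{p-1,q}'(\tau) + (q-p+1) g_{p-1,q}(\tau)$ where we want $(q-p)\tau g_{p-1,q}'(\tau) + q g_{p-1,q-2}(\tau)$. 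Hence it suffices to show that these two quantities coincide, i.e.
$$
\ba{c}
(q-p+1) g_{p-1, q}(\tau) - q g_{p-1, q-2}(\tau) = \tau g_{p-1, q}'(\tau).
\ea
$$

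This last equality is precisely Lemma~\ref{lemma:formula_for_qmpg} applied with $p$ replaced by $p-1$: that lemma states $(q-p') g_{p', q}(\tau) = \tau g_{p', q}'(\tau) + q g_{p', q-2}(\tau)$ for every integer $p' \geq 0$, and taking $p' = p-1$ (valid since $p \geq 1$) gives $(q-p+1) g_{p-1, q}(\tau) = \tau g_{p-1, q}'(\tau) + q g_{p-1, q-2}(\tau)$, which rearranges to exactly the identity displayed above. Substituting back into the formula from Lemma~\ref{lemma:formula_for_derivative_of_polynomial_1} yields the desired expression for $g_{p,q}'(\tau)$.

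So the whole argument is a two-line substitution: invoke Lemma~\ref{lemma:formula_for_derivative_of_polynomial_1}, then replace $(q-p-1)\tau g_{p-1,q}'(\tau) + (q-p+1)g_{p-1,q}(\tau)$ by $(q-p)\tau g_{p-1,q}'(\tau) + q g_{p-1,q-2}(\tau)$ using the consequence of Lemma~\ref{lemma:formula_for_qmpg} with index $p-1$. There is essentially no obstacle here; the only point requiring a moment's care is checking that Lemma~\ref{lemma:formula_for_qmpg} is applicable at index $p-1$, which holds because that lemma was stated for all integers $\geq 0$ and $p-1 \geq 0$. No induction is needed for this lemma itself — it is a direct corollary of the two preceding results.
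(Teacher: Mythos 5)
Your proof is correct and is exactly the paper's argument: the paper's entire proof reads ``Apply Lemma~\ref{lemma:formula_for_qmpg} to the last term in \eqref{eq:formula_for_derivative_of_polynomial_1}'', which is precisely your substitution of $(q-p+1)g_{p-1,q}(\tau) = \tau g_{p-1,q}'(\tau) + q g_{p-1,q-2}(\tau)$ (Lemma~\ref{lemma:formula_for_qmpg} at index $p-1$) into the identity of Lemma~\ref{lemma:formula_for_derivative_of_polynomial_1}. You have merely written out the one-line computation the paper leaves implicit.
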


\begin{proof}
Apply Lemma~\ref{lemma:formula_for_qmpg} to the last term in
\eqref{eq:formula_for_derivative_of_polynomial_1}.\qed
\end{proof}

The following lemma is particularly interesting. It turns out that,
up to a constant factor, the derivative of the polynomial $g_{p, q}$
is exactly the previous polynomial but
with a shifted value of $q$.

\begin{lemma}\label{lemma:formula_for_derivative_of_polynomial_3}
For any integer $p \geq 1$, and any $q \in \R$, we have $g_{p, q}' = p q g_{p-1, q-2}$.
\end{lemma}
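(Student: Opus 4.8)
The plan is to prove the identity $g_{p,q}' = p\, q\, g_{p-1,q-2}$ by induction on $p$, using the recursive Definition~\ref{definition:polynomials} together with the auxiliary identities already established. The base case $p=1$ is immediate: $g_{1,q}(\tau) = q\tau$, so $g_{1,q}'(\tau) = q = 1 \cdot q \cdot 1 = 1 \cdot q \cdot g_{0,q-2}(\tau)$, since $g_{0,q-2} \equiv 1$.

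For the inductive step, assume $p \geq 2$ and that the claim holds for $p-1$, i.e. $g_{p-1,q}' = (p-1)\, q\, g_{p-2,q-2}$ for every $q \in \R$ (in particular this is an identity in the parameter $q$, so we may also apply it with $q$ replaced by $q-2$). The natural starting point is Lemma~\ref{lemma:formula_for_derivative_of_polynomial_2}, which expresses $g_{p,q}'$ as
\[
g_{p,q}'(\tau) = (1-\tau^2) g_{p-1,q}''(\tau) + (q-p)\tau g_{p-1,q}'(\tau) + q\, g_{p-1,q-2}(\tau).
\]
I would then substitute the inductive hypothesis into the first two terms: $g_{p-1,q}' = (p-1) q\, g_{p-2,q-2}$ gives $g_{p-1,q}'' = (p-1) q\, g_{p-2,q-2}'$ as well. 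This rewrites the right-hand side as $(p-1) q \bigl[ (1-\tau^2) g_{p-2,q-2}'(\tau) + (q-p)\tau g_{p-2,q-2}(\tau) \bigr] + q\, g_{p-1,q-2}(\tau)$.

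The goal is to recognize the bracketed expression, combined with the last term, as $p\, q\, g_{p-1,q-2}(\tau)$. Here I would invoke Definition~\ref{definition:polynomials} with the shifted parameter $q-2$ in place of $q$: namely $g_{p-1,q-2}(\tau) = (1-\tau^2) g_{p-2,q-2}'(\tau) + \bigl((q-2)-(p-1)+1\bigr)\tau g_{p-2,q-2}(\tau) = (1-\tau^2) g_{p-2,q-2}'(\tau) + (q-p)\tau g_{p-2,q-2}(\tau)$. The exponent $(q-2)-(p-1)+1 = q-p$ matches exactly the coefficient appearing in the bracket, which is the crucial algebraic coincidence. Hence the bracket equals $g_{p-1,q-2}(\tau)$, and the right-hand side collapses to $(p-1) q\, g_{p-1,q-2}(\tau) + q\, g_{p-1,q-2}(\tau) = p\, q\, g_{p-1,q-2}(\tau)$, as desired.

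The main obstacle is purely bookkeeping: one must be careful that the inductive hypothesis, being an identity valid for all real $q$, can be specialized to $q-2$, and that the coefficient shift in the recursion of Definition~\ref{definition:polynomials} produces precisely $q-p$ rather than something off by one. No nontrivial estimation is involved; once Lemma~\ref{lemma:formula_for_derivative_of_polynomial_2} is in hand the result follows by a short matching of coefficients.
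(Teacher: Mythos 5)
Your proposal is correct and follows essentially the same route as the paper: induction on $p$, base case from the explicit form of $g_{1,q}$, then Lemma~\ref{lemma:formula_for_derivative_of_polynomial_2} combined with the inductive hypothesis $g_{p-1,q}'=(p-1)q\,g_{p-2,q-2}$ (hence $g_{p-1,q}''=(p-1)q\,g_{p-2,q-2}'$), and finally recognizing the bracketed expression as $g_{p-1,q-2}$ via Definition~\ref{definition:polynomials} with $q$ replaced by $q-2$. The coefficient check $(q-2)-(p-1)+1=q-p$ that you highlight is exactly the step the paper relies on, so nothing further is needed.
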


\begin{proof}
We proceed by induction on $p$. Let $\tau \in \R$. For
$p=1$, we know from
Definition~\ref{definition:polynomials} that $g_{p,
q}(\tau) = q \tau$, while $p q g_{p-1, q-2}(\tau) = q$,
therefore the claim is indeed true. Now let us prove the
claim for $p \geq 2$ assuming that it is already proved
for all integer $0 \leq p' \leq p-1$. From
Lemma~\ref{lemma:formula_for_derivative_of_polynomial_2},
we already know that
$$
\ba{c}
g_{p, q}'(\tau) = (1-\tau^2) g_{p-1, q}''(\tau)
+ (q-p) \tau g_{p-1, q}'(\tau)
+ q g_{p-1, q-2}(\tau).
\ea
$$
Therefore it remains to prove that
$$
\ba{c}
(1-\tau^2) g_{p-1, q}''(\tau) + (q-p) \tau g_{p-1, q}'(\tau) = (p-1) q g_{p-1, q-2}(\tau).
\ea
$$
\balert{By the induction hypothesis for $p' := p-1$, we
already have the identity $g_{p-1, q}' = (p-1) q g_{p-2,
q-2}$ and in particular $g_{p-1, q}'' = (p-1) q \alert{g_
{p-2, q-2}'}$. Thus,}
$$
\ba{rl}
(1-\tau^2) g_{p-1, q}''&(\tau) + (q-p) \tau g_{p-1, q}'(\tau) \\
&= (p-1) q [ (1-\tau^2) g_{p-2, q-2}'(\tau) + (q-p) \tau g_{p-2, q-2}(\tau) ].
\ea
$$
It remains to verify that
$$
\ba{c}
(1-\tau^2) g_{p-2, q-2}'(\tau) + (q-p) \tau g_{p-2, q-2}(\tau) = g_{p-1, q-2}(\tau).
\ea
$$
But this is given directly by Definition~\ref{definition:polynomials}.\qed
\end{proof}

Combined with Definition~\ref{definition:polynomials},
Lemma~\ref{lemma:formula_for_derivative_of_polynomial_3}
gives us a useful recursive formula for $g_{p, q}$, that
does not involve any derivatives.

\begin{lemma}\label{lemma:formula_for_polynomials_without_derivatives}
For any integer $p \geq 2$, and any $q, \tau \in \R$,
\begin{equation}\label{eq:formula_for_polynomials_without_derivatives}
\ba{c}
g_{p, q}(\tau) = (1-\tau^2) (p-1) q g_{p-2, q-2}(\tau) + (q-p+1) \tau g_{p-1, q}(\tau).
\ea
\end{equation}
\end{lemma}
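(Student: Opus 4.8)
The plan is a direct one-step substitution; no induction will be needed. I would start from Definition~\ref{definition:polynomials}, which for $p \geq 1$ gives
$$
g_{p, q}(\tau) = (1-\tau^2) g_{p-1, q}'(\tau) + (q-p+1) \tau g_{p-1, q}(\tau).
$$
The only term on the right that still involves a derivative is $g_{p-1, q}'$, so the whole task reduces to rewriting this single term without derivatives.

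Next, since the hypothesis $p \geq 2$ guarantees $p - 1 \geq 1$, I would invoke Lemma~\ref{lemma:formula_for_derivative_of_polynomial_3} with $p$ replaced by $p-1$, obtaining the derivative-free identity $g_{p-1, q}'(\tau) = (p-1) q\, g_{p-2, q-2}(\tau)$. Substituting this into the displayed recursion turns $g_{p-1, q}'(\tau)$ into $(p-1) q\, g_{p-2, q-2}(\tau)$, which is exactly the first term on the right-hand side of \eqref{eq:formula_for_polynomials_without_derivatives}; the second term $(q-p+1)\tau g_{p-1, q}(\tau)$ is already in the required shape. This completes the derivation.

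I do not expect any genuine obstacle here: the entire content of the lemma is the observation that Lemma~\ref{lemma:formula_for_derivative_of_polynomial_3} eliminates the derivative appearing in Definition~\ref{definition:polynomials}. The only point worth flagging explicitly is the role of the hypothesis $p \geq 2$: it is precisely what places the index $p-1$ in the admissible range $p - 1 \geq 1$ for Lemma~\ref{lemma:formula_for_derivative_of_polynomial_3}, so that the substitution is legitimate.
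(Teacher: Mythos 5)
Your proof is correct and is exactly the argument the paper intends: the lemma is stated as an immediate consequence of combining Definition~\ref{definition:polynomials} with Lemma~\ref{lemma:formula_for_derivative_of_polynomial_3} applied at index $p-1$, which is precisely your substitution. Your remark that $p \geq 2$ is needed so that $p-1 \geq 1$ falls within the scope of that lemma is also the right (and only) point to check.
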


Lemma~\ref{lemma:formula_for_polynomials_without_derivatives}
has several corollaries. The first one gives us
closed-form expressions for the values of $g_{p, q}$ at
the boundary points of the interval $[0, 1]$.

\begin{proposition}\label{proposition:boundary_values_of_polynomials}
For any integer $p \geq 0$, and any $q \in \R$, \alert{we
have}\footnote{\alert{For a positive integer $n$, by $n!!$
we denote the double factorial of $n$ (the product of all
integers between 1 and $n$ with the same parity as $n$).
We also define $(-1)!! = 0!! = 1$.}}
\begin{equation}\label{eq:boundary_value_of_polynomial_0}
\ba{c}
g_{p, q}(0) =
\begin{cases}
(p-1)!! \prod_{i=0}^{\frac{p}{2}-1} (q - 2i), & \text{if $p$ even}, \\
0, & \text{if $p$ odd},
\end{cases}
\ea
\end{equation}
and
\begin{equation}\label{eq:boundary_value_of_polynomial_1}
\ba{c}
g_{p, q}(1) = \prod_{i=0}^{p-1} (q-i).
\ea
\end{equation}
\end{proposition}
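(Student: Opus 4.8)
The statement splits naturally into three cases, each handled by a short induction that exploits the vanishing of one of the two terms in a recursion at the relevant boundary point.

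First, the value at $\tau = 0$ for odd $p$ is immediate: by Proposition~\ref{proposition:parity} the function $g_{p, q}$ is odd, so $g_{p, q}(0) = -g_{p, q}(0)$, whence $g_{p, q}(0) = 0$. This disposes of the odd case entirely.

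Next, for the value at $\tau = 0$ with $p$ even, the plan is induction on $p$ in steps of $2$. The base case $p = 0$ reads $g_{0, q}(0) = 1$, which matches the right-hand side of \eqref{eq:boundary_value_of_polynomial_0} since $(-1)!! = 1$ and the product is empty. For the inductive step with $p \geq 2$ even, I would evaluate the derivative-free recursion \eqref{eq:formula_for_polynomials_without_derivatives} from Lemma~\ref{lemma:formula_for_polynomials_without_derivatives} at $\tau = 0$: the term $(q-p+1)\tau g_{p-1, q}(\tau)$ vanishes, leaving $g_{p, q}(0) = (p-1) q\, g_{p-2, q-2}(0)$. Applying the induction hypothesis to $g_{p-2, q-2}(0)$ gives $(p-1) q (p-3)!! \prod_{i=0}^{p/2 - 2}\bigl((q-2) - 2i\bigr)$. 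The only non-trivial step is the bookkeeping: $(p-1)(p-3)!! = (p-1)!!$, and reindexing $j = i+1$ turns $q \prod_{i=0}^{p/2-2}(q - 2(i+1))$ into $\prod_{j=0}^{p/2 - 1}(q - 2j)$, yielding exactly \eqref{eq:boundary_value_of_polynomial_0}. (Alternatively, one can start from $g_{p, q}(0) = g_{p-1, q}'(0)$ via Definition~\ref{definition:polynomials} and then invoke Lemma~\ref{lemma:formula_for_derivative_of_polynomial_3}; this reaches the same reduction.)

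Finally, for the value at $\tau = 1$, the plan is a straightforward induction on $p \geq 0$ using Definition~\ref{definition:polynomials} directly. The base case $p = 0$ is $g_{0, q}(1) = 1$, the empty product. For $p \geq 1$, evaluating the defining recursion at $\tau = 1$ kills the factor $(1 - \tau^2)$, so $g_{p, q}(1) = (q - p + 1)\, g_{p-1, q}(1)$; the induction hypothesis gives $g_{p-1, q}(1) = \prod_{i=0}^{p-2}(q-i)$, and since $q - p + 1 = q - (p-1)$ this telescopes to $\prod_{i=0}^{p-1}(q-i)$, which is \eqref{eq:boundary_value_of_polynomial_1}. I do not anticipate any real obstacle here; the entire proof is driven by the observation that at each endpoint one summand of the recursion drops out, and the remaining work is elementary manipulation of products and double factorials.
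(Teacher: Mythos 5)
Your proof is correct and follows essentially the same route as the paper: an induction driven by the vanishing of one term of the recursion at each endpoint, yielding $g_{p,q}(0) = (p-1)\,q\,g_{p-2,q-2}(0)$ and $g_{p,q}(1) = (q-p+1)\,g_{p-1,q}(1)$, followed by the same double-factorial and index-shift bookkeeping. The only (harmless) differences are that you dispatch the odd case at $\tau=0$ via Proposition~\ref{proposition:parity} instead of carrying it through the induction, and you obtain the $\tau=1$ recursion directly from Definition~\ref{definition:polynomials} rather than from Lemma~\ref{lemma:formula_for_polynomials_without_derivatives}; both give identical recurrences.
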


\begin{proof}
\alert{We proceed by induction on $p$. From
Definition~\ref{definition:polynomials}, we have $g_{0,
q}(0) = g_{0, q}(1) = 1$ and $g_{1, q}(0) = 0$, $g_{1, q}(1)
= q$. Thus, the claim is indeed true for $p=0$ and
$p=1$. Now let us prove the claim for $p \geq 2$ assuming
that it is already true for all integer $0 \leq p' \leq
p-1$. Using
Lemma~\ref{lemma:formula_for_polynomials_without_derivatives},
we obtain}
\begin{equation}\label{eq:boundary_values_of_polynomials_proof1}
\ba{c}
g_{p, q}(0) = (p-1) q g_{p-2, q-2}(0).
\ea
\end{equation}
By the induction hypothesis, applied for $p' := p-2$ (and $q' := q-2$), we have
$$
\ba{c}
g_{p-2, q-2}(0) =
\begin{cases}
(p-3)!! \alert{\prod_{i=0}^{\frac{p}{2}-2}} (q-2-2i), & 
\text{if $p$ is even}, \\
0, & \text{if $p$ is odd}.
\end{cases}
\ea
$$
By shifting the index in the product, this can be rewritten as
$$
\ba{c}
g_{p-2, q-2}(0) =
\begin{cases}
(p-3)!! \alert{\prod_{i=1}^{\frac{p}{2}-1}} (q-2i), & 
\text{if $p$ is even}, \\
0, & \text{if $p$ is odd}.
\end{cases}
\ea
$$
Substituting this into \eqref{eq:boundary_values_of_polynomials_proof1},
we obtain \eqref{eq:boundary_value_of_polynomial_0}.

\balert{Similarly, by Lemma~\ref{lemma:formula_for_polynomials_without_derivatives}, we
also have $g_{p, q}(1) = (q-p+1) g_{p-1, q}(1)$. But by the
induction hypothesis, $g_{p-1, q}(1) = \prod_{i=0}^{p-2} 
(q-i)$, and we obtain
\eqref{eq:boundary_value_of_polynomial_1}.}\qed
\end{proof}

The second corollary of Lemma~\ref{lemma:formula_for_polynomials_without_derivatives}
states that $g_{p, q}$ cannot take negative values on the interval $[0, 1]$,
provided that $q$ is sufficiently large.

\begin{proposition}\label{proposition:nonnegativity_of_polynomials}
For any integer $p \geq 0$, and any real $q \geq p-1$, $g_{p, q}$ is non-negative on $[0, 1]$.
\end{proposition}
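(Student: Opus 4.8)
The plan is to argue by strong induction on $p$, using the derivative-free recursion of Lemma~\ref{lemma:formula_for_polynomials_without_derivatives}. The base cases are immediate: $g_{0, q}(\tau) = 1 \geq 0$ for every $\tau$, and $g_{1, q}(\tau) = q \tau$, which is non-negative on $[0, 1]$ since the hypothesis $q \geq p - 1 = 0$ gives $q \geq 0$ and $\tau \geq 0$ there.

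For the inductive step, fix an integer $p \geq 2$ and a real $q \geq p - 1$, and assume the claim holds for all smaller values of $p$ (in particular for $p-1$ and $p-2$, both of which are non-negative). By Lemma~\ref{lemma:formula_for_polynomials_without_derivatives}, for $\tau \in [0, 1]$ we have
\[
g_{p, q}(\tau) = (1 - \tau^2)(p-1) q \, g_{p-2, q-2}(\tau) + (q - p + 1) \tau \, g_{p-1, q}(\tau).
\]
I would then check that every factor on the right-hand side is non-negative on $[0, 1]$: the factors $1 - \tau^2$ and $\tau$ are non-negative for $\tau \in [0,1]$; the factor $q - p + 1$ is non-negative by assumption; and $(p-1) q \geq 0$ because $p \geq 2$ and $q \geq p - 1 \geq 1$.

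It remains to apply the induction hypothesis to the two polynomial factors, and here the only point requiring care is the bookkeeping of the constraint on $q$ under the index shifts. For $g_{p-1, q}$ we invoke the hypothesis with the pair $(p-1, q)$, which is valid since $q \geq p - 1 \geq (p-1) - 1$. For $g_{p-2, q-2}$ we invoke it with the pair $(p-2, q-2)$, which is valid since $q - 2 \geq (p-1) - 2 = (p-2) - 1$, i.e. precisely the inequality $q \geq p - 1$. Hence both $g_{p-1,q}$ and $g_{p-2,q-2}$ are non-negative on $[0,1]$, so the whole right-hand side is non-negative there, and the induction closes. I do not expect any genuine obstacle: the argument is a sign check on the recursion, and the constraint $q \geq p-1$ is preserved (indeed weakened, or unchanged) under both shifts $(p,q)\mapsto(p-1,q)$ and $(p,q)\mapsto(p-2,q-2)$.
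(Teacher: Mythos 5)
Your proof is correct and follows essentially the same route as the paper: strong induction on $p$ with base cases $p=0,1$, the derivative-free recursion of Lemma~\ref{lemma:formula_for_polynomials_without_derivatives}, and a sign check of each factor, including the correct bookkeeping that both shifts $(p-1,q)$ and $(p-2,q-2)$ preserve the constraint $q \geq p-1$. No gaps.
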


\begin{proof}
We proceed by induction on $p$. Let $0 \leq \tau \leq 1$. For $p=0$, we know,
by Definition~\ref{definition:polynomials}, that $g_{p, q}(\tau) = 1$, which is
actually non-negative for all real $q$.
For $p=1$, by Definition~\ref{definition:polynomials}, we have $g_{p, q}(\tau) = q \tau$,
which is indeed non-negative when $q \geq p-1 = 0$.

Now let us prove the claim for $p \geq 2$, assuming that it
is already proved for all integer $0 \leq p' \leq p-1$.
From
Lemma~\ref{lemma:formula_for_polynomials_without_derivatives},
we know that
$$
\ba{c}
g_{p, q}(\tau) = (1-\tau^2) (p-1) q g_{p-2, q-2}(\tau) + (q-p+1) \tau g_{p-1, q}(\tau).
\ea
$$
By the induction hypothesis, applied respectively for $p' := p-2$, $q' := q-2$ and
$p' := p-1$, $q' := q$ (observe that in both cases $q' \geq p'-1$ since $q \geq p$),
we have $g_{p-2, q-2}(\tau) \geq 0$ and $g_{p-1, q}(\tau) \geq 0$.
Since $q \geq p-1 \geq 1$, then also $q-p+1 \geq 0$, and $(p-1) q \geq 0$.
Thus, all parts in the right-hand side of the above formula are non-negative.\qed
\end{proof}

Combining
Proposition~\ref{proposition:nonnegativity_of_polynomials}
with
Lemma~\ref{lemma:formula_for_derivative_of_polynomial_3},
we obtain that, when $q \geq p$, the polynomial $g_{p, q}$
is not only non-negative but also monotonically
increasing.

\begin{proposition}\label{proposition:monotonicity_of_polynomials}
For any integer $p \geq 0$, and any real $q \geq p$, the
derivative $g_{p, q}'$ is non-negative on $[0, 1]$;
hence $g_{p, q}$ is monotonically increasing on $[0, 1]$.
\end{proposition}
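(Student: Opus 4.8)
The plan is to derive this directly from Lemma~\ref{lemma:formula_for_derivative_of_polynomial_3}, which identifies $g_{p, q}'$ with $p q\, g_{p-1, q-2}$ up to the scalar factor. First I would observe that for $p \geq 1$ and $q \geq p$ we have $p q \geq 0$ (indeed $q \geq p \geq 1 > 0$), so the sign of $g_{p, q}'$ on $[0, 1]$ is exactly the sign of $g_{p-1, q-2}$ there. To apply Proposition~\ref{proposition:nonnegativity_of_polynomials} to $g_{p-1, q-2}$, I need the hypothesis of that proposition, namely that the (shifted) power dominates the (shifted) degree by at least $-1$: here the shifted degree is $p-1$ and the shifted power is $q-2$, so the requirement is $q - 2 \geq (p-1) - 1 = p - 2$, i.e. $q \geq p$, which is precisely our assumption. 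Hence $g_{p-1, q-2} \geq 0$ on $[0, 1]$, and therefore $g_{p, q}'(\tau) = p q\, g_{p-1, q-2}(\tau) \geq 0$ for all $\tau \in [0, 1]$.

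The case $p = 0$ must be handled separately since Lemma~\ref{lemma:formula_for_derivative_of_polynomial_3} only applies for $p \geq 1$: when $p = 0$, $g_{0, q}(\tau) = 1$ by Definition~\ref{definition:polynomials}, so $g_{0, q}' \equiv 0$, which is trivially non-negative on $[0, 1]$. Finally, the monotonicity conclusion is immediate from the non-negativity of the derivative: a differentiable function whose derivative is non-negative on an interval is non-decreasing (monotonically increasing in the weak sense used here) on that interval, by the mean value theorem.

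There is essentially no obstacle in this argument — the real work was done in establishing Lemma~\ref{lemma:formula_for_derivative_of_polynomial_3} and Proposition~\ref{proposition:nonnegativity_of_polynomials}. The only point requiring a moment's care is the bookkeeping on the parameter shift: one must check that applying the non-negativity proposition to $g_{p-1, q-2}$ with the pair $(p', q') = (p-1, q-2)$ still satisfies $q' \geq p' - 1$, and that this is equivalent to the hypothesis $q \geq p$ rather than the weaker $q \geq p - 1$ used in Proposition~\ref{proposition:nonnegativity_of_polynomials}. That is exactly why the hypothesis here is tightened by one unit compared to the non-negativity result.
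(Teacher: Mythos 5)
Your proof is correct and matches the paper's intended argument exactly: the paper derives this proposition by combining Lemma~\ref{lemma:formula_for_derivative_of_polynomial_3} with Proposition~\ref{proposition:nonnegativity_of_polynomials}, with the same parameter-shift bookkeeping $q-2 \geq (p-1)-1 \Leftrightarrow q \geq p$ that you spell out. Your explicit treatment of the $p=0$ case is a minor addition the paper leaves implicit.
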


Finally, let us show how we can apply the properties, that we have established above,
to finding the maximal absolute value of $g_{p, q}$ on $[-1, 1]$.

\begin{proposition}\label{proposition:maximal_absolute_value_of_polynomial}
For any integer $p \geq 0$, and any real $q \geq p$,
$$
\ba{c}
\max_{[-1, 1]} |g_{p, q}| = \prod_{i=0}^{p-1} (q-i).
\ea
$$
\end{proposition}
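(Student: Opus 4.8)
The plan is to reduce the maximum of $|g_{p,q}|$ over the whole interval $[-1,1]$ to a single boundary value, by assembling the three structural facts already at our disposal: parity, non-negativity, and monotonicity.

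First I would use Proposition~\ref{proposition:parity} to halve the domain. Since $g_{p,q}(-\tau) = (-1)^p g_{p,q}(\tau)$, the function $\tau \mapsto |g_{p,q}(\tau)|$ is even on $\R$ regardless of the parity of $p$. Hence it suffices to evaluate $\max_{[0,1]} |g_{p,q}|$, and this equals $\max_{[-1,1]} |g_{p,q}|$.

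Next I would remove the absolute value. Because $q \geq p$ implies in particular $q \geq p-1$, Proposition~\ref{proposition:nonnegativity_of_polynomials} gives $g_{p,q}(\tau) \geq 0$ for all $\tau \in [0,1]$, so that $|g_{p,q}| = g_{p,q}$ on $[0,1]$. Moreover, Proposition~\ref{proposition:monotonicity_of_polynomials} (this time genuinely using $q \geq p$) shows that $g_{p,q}$ is monotonically increasing on $[0,1]$. Therefore its maximum over $[0,1]$ is attained at the right endpoint: $\max_{[0,1]} |g_{p,q}| = g_{p,q}(1)$. Finally I would invoke the closed-form expression for the boundary value from Proposition~\ref{proposition:boundary_values_of_polynomials}, namely $g_{p,q}(1) = \prod_{i=0}^{p-1}(q-i)$, to obtain the claim.

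Since the argument is purely a bookkeeping combination of previously established results, there is no substantive obstacle here; the only point meriting a moment's attention is verifying that the single hypothesis $q \geq p$ is simultaneously enough to apply the non-negativity statement (which requires only $q \geq p-1$) and the monotonicity statement (which requires $q \geq p$) — and both are covered.
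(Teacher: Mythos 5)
Your proposal is correct and follows exactly the same route as the paper's own proof: parity to reduce to $[0,1]$, non-negativity to drop the absolute value, monotonicity to locate the maximum at $\tau=1$, and the boundary-value formula of Proposition~\ref{proposition:boundary_values_of_polynomials} to evaluate it. Nothing is missing.
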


\begin{proof}
By Proposition~\ref{proposition:parity}, we have
$\max_{[-1, 1]} | g_{p, q} | = \max_{[0, 1]} |g_{p, q}|$.
Since $g_{p, q}$ is non-negative on $[0, 1]$
(Proposition~\ref{proposition:nonnegativity_of_polynomials}),
$\max_{[0, 1]} |g_{p, q}| = \max_{[0, 1]} g_{p, q}$.
By Proposition~\ref{proposition:monotonicity_of_polynomials},
$\max_{[0, 1]} g_{p, q} = g_{p, q}(1)$. But
$g_{p, q}(1) = \prod_{i=0}^{p-1} (q-i)$ according to
Proposition~\ref{proposition:boundary_values_of_polynomials}.\qed
\end{proof}

\section{H\"older Constants of Polynomials}
\label{section:holder_constants_of_polynomials}

We continue our study of polynomials $g_{p, q}$, but now we restrict our attention
to the particular case when $q=p+\nu$ for some real $\nu \in [0, 1]$.

Clearly, the polynomial $g_{p, p+\nu}$ is $\nu$-H\"older
continuous on $[-1, 1]$, since this is true for any other
polynomial on a compact interval. The goal of this section
is to obtain an explicit expression for the corresponding
\emph{H\"older constant}. \balert{We start with the result,
allowing us to reduce our task to that on $[0, 1]$.}

\begin{theorem}\label{theorem:extending_holder_continuity_for_polynomials}
For any integer $p \geq 0$, and any real $\nu \in [0, 1]$, the polynomial
$g_{p, p+\nu}$ is $\nu$-H\"older continuous on $[-1, 1]$ with constant
\alert{$$
\ba{c}
\tilde{H}_{p, \nu} :=
\begin{cases}
H_{p, \nu}, & \text{if $p$ is even}, \\
2^{1-\nu} H_{p, \nu}, & \text{if $p$ is odd},
\end{cases}
\ea
$$}
where $H_{p, \nu}$ is the corresponding H\"older constant of $g_{p, p+\nu}$ on $[0, 1]$.
\end{theorem}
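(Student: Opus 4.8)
The point is that $\nu$-Hölder continuity of $g_{p,p+\nu}$ on $[-1,1]$ can be deduced from $\nu$-Hölder continuity on $[0,1]$, with only a mild loss in the constant that depends on the parity of $p$ (via Proposition~\ref{proposition:parity}). So the task splits into two cases according to whether $p$ is even or odd, and in each case I must bound $|g_{p,p+\nu}(\tau_2) - g_{p,p+\nu}(\tau_1)|$ by $\tilde H_{p,\nu}\,|\tau_2 - \tau_1|^\nu$ for all $\tau_1,\tau_2 \in [-1,1]$, knowing the bound already holds when both arguments lie in $[0,1]$.

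\textbf{The even case.} When $p$ is even, $g := g_{p,p+\nu}$ is an even function. Take $\tau_1,\tau_2 \in [-1,1]$. If they have the same sign, then after replacing both by their absolute values (which does not change the values of $g$, and does not increase $|\tau_2-\tau_1|$), we are reduced to the case $\tau_1,\tau_2 \in [0,1]$ and the bound with constant $H_{p,\nu}$ applies directly. If they have opposite signs, say $\tau_1 \le 0 \le \tau_2$, write $g(\tau_2) - g(\tau_1) = g(\tau_2) - g(-\tau_1)$ using evenness, and now both $\tau_2$ and $-\tau_1$ lie in $[0,1]$, so $|g(\tau_2)-g(\tau_1)| \le H_{p,\nu}|\tau_2 - (-\tau_1)|^\nu = H_{p,\nu}|\tau_2 + \tau_1|^\nu$. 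Since $\tau_1 \le 0 \le \tau_2$, we have $|\tau_2 + \tau_1| \le |\tau_2 - \tau_1|$, and the bound with constant $H_{p,\nu}$ follows. This gives $\tilde H_{p,\nu} = H_{p,\nu}$ for even $p$.

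\textbf{The odd case.} When $p$ is odd, $g$ is odd, so $g(0) = 0$ by Proposition~\ref{proposition:parity} (or \eqref{eq:boundary_value_of_polynomial_0}). The same-sign subcase is handled exactly as above and gives constant $H_{p,\nu}$. For the opposite-sign subcase $\tau_1 \le 0 \le \tau_2$, I split at the origin: $g(\tau_2) - g(\tau_1) = [g(\tau_2) - g(0)] + [g(0) - g(\tau_1)]$. The first bracket is bounded by $H_{p,\nu}\,\tau_2^\nu$; for the second, oddness gives $g(0) - g(\tau_1) = g(-\tau_1) - g(0)$ with $-\tau_1 \in [0,1]$, so it is bounded by $H_{p,\nu}\,(-\tau_1)^\nu = H_{p,\nu}\,|\tau_1|^\nu$. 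Hence $|g(\tau_2) - g(\tau_1)| \le H_{p,\nu}(\tau_2^\nu + |\tau_1|^\nu)$. Now I use the elementary inequality $a^\nu + b^\nu \le 2^{1-\nu}(a+b)^\nu$ for $a,b \ge 0$ and $\nu \in [0,1]$ (concavity of $t \mapsto t^\nu$, equivalently the power-mean inequality), together with $\tau_2 + |\tau_1| = \tau_2 - \tau_1 = |\tau_2 - \tau_1|$ in this subcase, to conclude $|g(\tau_2)-g(\tau_1)| \le 2^{1-\nu} H_{p,\nu}\,|\tau_2-\tau_1|^\nu$. Since $2^{1-\nu} \ge 1$, this dominates the same-sign bound, giving $\tilde H_{p,\nu} = 2^{1-\nu} H_{p,\nu}$ for odd $p$.

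\textbf{Main obstacle.} There is no serious obstacle; the only non-routine ingredient is the subadditivity-type estimate $a^\nu + b^\nu \le 2^{1-\nu}(a+b)^\nu$, which one should state and justify briefly (it follows from Jensen's inequality applied to the concave function $t \mapsto t^\nu$ at the midpoint). Care is needed only in bookkeeping the sign cases and in noting that passing to absolute values never increases the distance between the arguments when they share a sign; and that the split-at-origin trick in the odd case is exactly what forces the factor $2^{1-\nu}$, which cannot be avoided in general because $g$ changes sign at $0$.
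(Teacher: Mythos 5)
Your proof is correct and follows essentially the same route as the paper: reduce to the cases where the two points straddle the origin, use the parity of $g_{p,p+\nu}$ to reflect back into $[0,1]$, and in the odd case split at the origin and invoke $a^\nu+b^\nu\le 2^{1-\nu}(a+b)^\nu$. The only cosmetic difference is that you bound the two brackets via the triangle inequality rather than first using non-negativity to drop the absolute value, which if anything makes the argument marginally more self-contained.
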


\begin{proof}
Let $\tau_1, \tau_2 \in [-1, 1]$. We need to prove that
\begin{equation}\label{eq:extending_holder_continuity_for_polynomials_proof1}
\ba{c}
| g_{p, p+\nu}(\tau_2) - g_{p, p+\nu}(\tau_1) |
\leq \tilde{H}_{p, \nu} | \tau_2 - \tau_1 |^\nu.
\ea
\end{equation}

By Proposition~\ref{proposition:parity}, this inequality is invariant to negation transformations
$(\tau_1, \tau_2) \mapsto (-\tau_1, -\tau_2)$. Therefore, we can assume that $\tau_2 \geq 0$.
Furthermore, we can assume that $\tau_1 < 0$, since otherwise the claim is trivial.

\balert{\underline{Case I}. Suppose $p$ is even. Then, by
Proposition~\ref{proposition:parity},}
$$
\ba{c}
| g_{p, p+\nu}(\tau_2) - g_{p, p+\nu}(\tau_1) | = | g_{p, p+\nu}(\tau_2) - g_{p, p+\nu}(-\tau_1) |.
\ea
$$
Note that $-\tau_1, \tau_2 \in [0, 1]$. Therefore, by H\"older condition on $[0, 1]$,
$$
\ba{c}
| g_{p, p+\nu}(\tau_2) - g_{p, p+\nu}(-\tau_1) | \leq H_{p, \nu} |\tau_2 + \tau_1|^\nu.
\ea
$$
At the same time, $|\tau_2 + \tau_1| \leq \tau_2 - \tau_1$ by the triangle inequality,
and \eqref{eq:extending_holder_continuity_for_polynomials_proof1} follows.

\underline{Case II}. Now suppose $p$ is odd. By Proposition~\ref{proposition:parity}
and Proposition~\ref{proposition:nonnegativity_of_polynomials},
$$
\ba{c}
| g_{p, p+\nu}(\tau_2) - g_{p, p+\nu}(\tau_1) |
= g_{p, p+\nu}(\tau_2) + g_{p, p+\nu}(-\tau_1).
\ea
$$
Recall that $g_{p, p+\nu}(0) = 0$ (Proposition~\ref{proposition:boundary_values_of_polynomials}).
Therefore,
$$
\ba{rl}
g_{p, p+\nu}(\tau_2) &= g_{p, p+\nu}(\tau_2) - g_{p, p+\nu}
(0) \leq H_{p, \nu} \tau_2^\nu, \\
g_{p, p+\nu}(-\tau_1) &= g_{p, p+\nu}(-\tau_1) - g_{p, p+\nu}(0) \leq H_{p, \nu} (-\tau_1)^\nu.
\ea
$$
Hence,
$$
\ba{c}
g_{p, p+\nu}(\tau_2) + g_{p, p+\nu}(-\tau_1) \leq H_{p, \nu} ( \tau_2^\nu + (-\tau_1)^\nu ).
\ea
$$
\balert{To prove
\eqref{eq:extending_holder_continuity_for_polynomials_proof1},
it remains to show that $\tau_2^\nu + (-\tau_1)^\nu \leq 2^
{1-\nu} (\tau_2 - \tau_1)^\nu$. But this follows from the
concavity of power function $t \mapsto t^\nu$.}\qed
\end{proof}

Our next task is to estimate the H\"older constant of
$g_{p, p+\nu}$ on $[0, 1]$:
\begin{equation}\label{eq:def_H_p_nu}
\ba{c}
H_{p, \nu} := \max\limits_{0 \leq \tau_1 < \tau_2 \leq 1}
\frac{ g_{p, p+\nu}(\tau_2) - g_{p, p+\nu}(\tau_1) }{ (\tau_2 - \tau_1)^\nu }.
\ea
\end{equation}
\balert{Note that
Proposition~\ref{proposition:monotonicity_of_polynomials}
allows us to remove the absolute value sign.}

\begin{theorem}\label{theorem:holder_constant_of_polynomials}
For any integer $p \geq 0$, and any real $\nu \in [0, 1]$, we have
\begin{equation}\label{eq:holder_constant_of_polynomials}
\ba{c}
H_{p, \nu} \leq \prod_{i=1}^p (\nu+i).
\ea
\end{equation}
\end{theorem}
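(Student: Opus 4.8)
The plan is to recast \eqref{eq:holder_constant_of_polynomials} as a statement purely about $[0,1]$ and then prove it by induction on $p$, exploiting the recursive identities of Section~\ref{section:main_properties_of_polynomials}.

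First I would observe that, by Proposition~\ref{proposition:monotonicity_of_polynomials}, the numerator in the definition \eqref{eq:def_H_p_nu} of $H_{p,\nu}$ is automatically nonnegative, and that, combining Proposition~\ref{proposition:boundary_values_of_polynomials} with Proposition~\ref{proposition:maximal_absolute_value_of_polynomial}, the target constant equals the maximal value of the polynomial,
$$
\ba{c}
\prod_{i=1}^{p}(\nu+i) = g_{p,p+\nu}(1) = \max_{[0,1]} g_{p,p+\nu}.
\ea
$$
Hence \eqref{eq:holder_constant_of_polynomials} is equivalent to the estimate $g_{p,p+\nu}(\tau_2)-g_{p,p+\nu}(\tau_1)\le g_{p,p+\nu}(1)\,(\tau_2-\tau_1)^{\nu}$ for all $0\le\tau_1<\tau_2\le1$. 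I would prove this by induction on $p$; the cases $p=0$ (where $g_{0,\nu}\equiv1$) and $p=1$ (where $g_{1,1+\nu}(\tau)=(1+\nu)\tau$, giving equality) are immediate. For the step, the pivotal identity is Lemma~\ref{lemma:formula_for_derivative_of_polynomial_3}, $g_{p,p+\nu}'=p(p+\nu)\,g_{p-1,p+\nu-2}$, which reduces the task to estimating $\int_{\tau_1}^{\tau_2}g_{p-1,p+\nu-2}$. The obstruction is that the power $p+\nu-2$ now lies strictly below the index $p-1$, so the inductive hypothesis cannot be applied to $g_{p-1,p+\nu-2}$ directly. To get around this, I would iterate Definition~\ref{definition:polynomials} together with Lemma~\ref{lemma:formula_for_derivative_of_polynomial_3} to write $g_{p-1,p+\nu-2}$ as a sum of terms of the form $(\text{positive constant})\cdot\tau^{\varepsilon}(1-\tau^2)^{k}\,g_{m,m+\nu}(\tau)$ with $0\le m<p$ of the same parity as $p$; each such $g_{m,m+\nu}$ is nonnegative and increasing on $[0,1]$ (Propositions~\ref{proposition:nonnegativity_of_polynomials} and~\ref{proposition:monotonicity_of_polynomials}) and, being of the admissible form, is controlled by the inductive hypothesis. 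One then integrates this expansion over $[\tau_1,\tau_2]$, using the explicit primitives of $\tau(1-\tau^2)^{k}$, the monotonicity and boundary values of the $g_{m,m+\nu}$, and the elementary facts that $t\mapsto t^{\nu}$ is concave and subadditive on $[0,\infty)$, so as to produce a bound of the form $(\text{constant})\,(\tau_2-\tau_1)^{\nu}$.

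The part I expect to be the main obstacle is making these estimates tight enough: crude term-by-term bounds — replacing each $g_{m,m+\nu}(\tau)$ by its maximum $g_{m,m+\nu}(1)$ and each $\int_{\tau_1}^{\tau_2}\tau(1-\tau^2)^{k}$ by $\tau_2-\tau_1$ — already overshoot $\prod_{i=1}^{p}(\nu+i)$ at $p=4$, since the factor $(1-\tau^2)$ in the recursion supplies a cancellation near $\tau=1$ that such bounds discard (indeed $g_{p,p+\nu}'$ can itself exceed $\prod_{i=1}^{p}(\nu+i)$ on $[0,1]$). One therefore has to keep track of this cancellation, and I expect the cleanest way to be a split on the parity of $p$. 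For odd $p$ the polynomial $g_{p,p+\nu}$ is concave on $[0,1]$: its second derivative equals, up to a positive factor, $g_{p-2,(p-4)+\nu}$, which is $\le0$ on $[0,1]$ — a fact one establishes by an auxiliary induction on the index (coupled with a companion nonnegativity statement for the neighbouring polynomials) via Lemma~\ref{lemma:formula_for_polynomials_without_derivatives}. Concavity together with $g_{p,p+\nu}(0)=0$ forces the supremum of the H\"older quotient, taken over $\tau_1$ with $\tau_2-\tau_1$ fixed, to be attained at $\tau_1=0$; and there $\tau\mapsto g_{p,p+\nu}(\tau)/\tau^{\nu}$ is nondecreasing on $(0,1]$, because Lemma~\ref{lemma:formula_for_qmpg} applied with $q=p+\nu$ gives $\tau g_{p,p+\nu}'-\nu g_{p,p+\nu}=-(p+\nu)\,g_{p,p+\nu-2}\ge0$ on $[0,1]$; hence $g_{p,p+\nu}(\tau)/\tau^{\nu}\le g_{p,p+\nu}(1)$, which closes the odd case. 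For even $p$, where $g_{p,p+\nu}$ is no longer globally concave and the bound is no longer sharp, I would run the analogous argument after the substitution $s=\tau^{2}$, writing $g_{p,p+\nu}(\tau)=w(\tau^{2})$ with $w$ increasing on $[0,1]$ and relating the H\"older quotient in $\tau$ to quantities in $s$ through $\tau_2-\tau_1=(\tau_2^{2}-\tau_1^{2})/(\tau_1+\tau_2)$; the delicate point there is to control the extra $\tau_1+\tau_2$ factor uniformly, and it is precisely here that one can afford a possibly non-optimal — but still $\le\prod_{i=1}^{p}(\nu+i)$ — estimate.
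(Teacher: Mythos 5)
Your argument for odd $p$ is correct and genuinely different from the paper's. The auxiliary claim that $g_{m,m-2+\nu}\le 0$ on $[0,1]$ for odd $m$ does follow by induction from Lemma~\ref{lemma:formula_for_polynomials_without_derivatives}: both terms on its right-hand side are then non-positive, using Proposition~\ref{proposition:nonnegativity_of_polynomials} for the second. Hence $g_{p,p+\nu}$ is concave on $[0,1]$, the extremal $\tau_1$ is $0$, and Lemma~\ref{lemma:formula_for_qmpg} gives $\tau g_{p,p+\nu}'(\tau)-\nu g_{p,p+\nu}(\tau)=-(p+\nu)g_{p,p-2+\nu}(\tau)\ge 0$, so $g_{p,p+\nu}(\tau)/\tau^{\nu}$ increases up to $g_{p,p+\nu}(1)=\prod_{i=1}^{p}(\nu+i)$. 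For this half of the statement your route is more elementary than the paper's.

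The even case, however, is a genuine gap, and it is precisely the half where the paper's machinery does the real work. Your reduction via $s=\tau^{2}$ does not close: writing $g_{p,p+\nu}(\tau)=w(\tau^{2})$ and $\tau_2-\tau_1=(s_2-s_1)/(\tau_1+\tau_2)$, the only uniform control of the extra factor is $\tau_1+\tau_2\le 2$, which would require $w(s_2)-w(s_1)\le 2^{-\nu}g_{p,p+\nu}(1)(s_2-s_1)^{\nu}$; already for $p=4$, $\nu=1$, $s_1=0$, $s_2=1$ this reads $75\le 60$ and is false (here $g_{4,5}(0)=45$, $g_{4,5}(1)=120$). Nor can the concavity-based shift of $\tau_1$ to $0$ be transplanted: concavity of $w$ increases the numerator of the quotient, but the substitution increases the denominator as well (one checks $\sqrt{s_2-s_1}\ge\tau_2-\tau_1$ whenever $\tau_1\ge 0$), so the two effects pull in opposite directions; and indeed for $p=2$ the supremum in \eqref{eq:def_H_p_nu} is attained at the interior point $\tau_1=\nu/(2-\nu)$, $\tau_2=1$, not at $\tau_1=0$. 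The paper handles both parities uniformly with two monotonicity statements: Proposition~\ref{proposition:monotone_fraction} pins the extremal $\tau_2$ at $1$, and Proposition~\ref{proposition:monotone_fraction1} shows that $g_{p,p+\nu}(\tau)/(1-(1-\tau)^{\nu})$ is decreasing, which rearranges into exactly $(g_{p,p+\nu}(1)-g_{p,p+\nu}(\tau_1))/(1-\tau_1)^{\nu}\le g_{p,p+\nu}(1)$; both rest on the same identity from Lemma~\ref{lemma:formula_for_qmpg} that you use, supplemented by the inductive inequalities of Lemmas~\ref{lemma:inequality_g_minus_tau_derivative}--\ref{lemma:auxiliary_inequality_for_monotone_fraction1}. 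Until you supply the non-uniform control of $\tau_1+\tau_2$ that you only allude to, the even half of the theorem remains unproved.
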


The proof of Theorem~\ref{theorem:holder_constant_of_polynomials}
is based on two auxiliary propositions.

\begin{proposition}\label{proposition:monotone_fraction}
For any integer $p \geq 0$ and any real $\nu, \tau_1 \in [0, 1]$, the function
\begin{equation}\label{eq:proposition_monotone_fraction}
\ba{c}
]\tau_1, +\infty[ \to \R:
\tau_2 \mapsto \frac{g_{p, p+\nu}(\tau_2) - g_{p, p+\nu}(\tau_1)}{(\tau_2 - \tau_1)^\nu}
\ea
\end{equation}
is monotonically increasing on $]\tau_1, 1]$.
\end{proposition}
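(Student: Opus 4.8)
The plan is to differentiate $\phi$, reduce the statement to a polynomial inequality, and then use the identities and shape results established above for the polynomials $g_{p,q}$. Write $g:=g_{p,p+\nu}$. Since $g$ is a polynomial, $\phi$ is differentiable on $]\tau_1,1]$ and
$$
\phi'(\tau_2)=\frac{g'(\tau_2)(\tau_2-\tau_1)-\nu(g(\tau_2)-g(\tau_1))}{(\tau_2-\tau_1)^{\nu+1}},
$$
so it suffices to show that $h(\tau_2):=g'(\tau_2)(\tau_2-\tau_1)-\nu(g(\tau_2)-g(\tau_1))$ is non-negative on $]\tau_1,1]$. Note $h(\tau_1)=0$. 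The case $\nu=0$ is trivial, because then $h(\tau_2)=g'(\tau_2)(\tau_2-\tau_1)\ge 0$ by Proposition~\ref{proposition:monotonicity_of_polynomials}, and $p\le 1$ is equally immediate; so from now on $\nu\in\,]0,1]$ and $p\ge 2$.

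I would first settle the case $\tau_1=0$. Lemma~\ref{lemma:formula_for_qmpg} with $q=p+\nu$ gives $\tau g'(\tau)=\nu g(\tau)-(p+\nu)g_{p,p+\nu-2}(\tau)$, and the same identity at $\tau=0$ gives $\nu g(0)=(p+\nu)g_{p,p+\nu-2}(0)$; substituting both into $h$ (with $\tau_1=0$) yields $h(\tau_2)=(p+\nu)(g_{p,p+\nu-2}(0)-g_{p,p+\nu-2}(\tau_2))$. So the claim reduces to: $g_{p,p+\nu-2}$ attains its maximum over $[0,1]$ at $\tau=0$. I would prove this by induction on $p$ via Lemma~\ref{lemma:formula_for_polynomials_without_derivatives} with $q=p+\nu-2$, which reads
$$
g_{p,p+\nu-2}(\tau)=(1-\tau^2)(p-1)(p+\nu-2)\,g_{p-2,p+\nu-4}(\tau)+(\nu-1)\tau\,g_{p-1,p+\nu-2}(\tau).
$$
On $[0,1]$ the second summand is $\le 0$, since $\nu-1\le 0$ and $g_{p-1,p+\nu-2}\ge 0$ there by Proposition~\ref{proposition:nonnegativity_of_polynomials} (because $p+\nu-2\ge(p-1)-1$); and, since $0\le 1-\tau^2\le 1$ and $g_{p-2,p+\nu-4}(0)\ge 0$ by Proposition~\ref{proposition:boundary_values_of_polynomials}, the induction hypothesis $g_{p-2,p+\nu-4}(\tau)\le g_{p-2,p+\nu-4}(0)$ implies $(1-\tau^2)g_{p-2,p+\nu-4}(\tau)\le g_{p-2,p+\nu-4}(0)$, so the first summand is at most $(p-1)(p+\nu-2)g_{p-2,p+\nu-4}(0)$; evaluating the display at $\tau=0$ shows this last quantity equals $g_{p,p+\nu-2}(0)$, completing the induction.

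For general $\tau_1\in\,]0,1[$ I would regard $h$ as a function of $\tau_1$, with $\tau_2$ fixed: it vanishes at $\tau_1=\tau_2$, is non-negative at $\tau_1=0$ by the previous step, and has second derivative $\nu g''(\tau_1)$ in $\tau_1$. By Lemma~\ref{lemma:formula_for_derivative_of_polynomial_3}, $g''$ is the non-negative constant $p(p+\nu)(p-1)(p+\nu-2)$ times $g_{p-2,p+\nu-4}$; together with the fact that $g_{p-2,p+\nu-4}$ changes sign at most once on $[0,1]$ (and from $+$ to $-$ when it does), this shows that on $[0,1]$ the polynomial $g$ is concave when $p$ is odd, and when $p$ is even has at most one inflection point $a$, being convex on $[0,a]$ and concave on $[a,1]$. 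If $g$ is concave on $[0,\tau_2]$, then so is $h$ in $\tau_1$, so $h$ lies above the chord joining its value at $\tau_1=0$ (which is $\ge 0$) and at $\tau_1=\tau_2$ (which is $0$), hence $h\ge 0$. If $p$ is even and $\tau_2\le a$, then $g$ is convex on $[0,\tau_2]$, so $h$ is convex in $\tau_1$ and lies above its tangent at $\tau_1=\tau_2$, whose height at $\tau_1$ is $(1-\nu)g'(\tau_2)(\tau_2-\tau_1)\ge 0$; again $h\ge 0$.

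The main obstacle is the remaining configuration, $p$ even with $\tau_2>a$, where $h$ is convex in $\tau_1$ on $[0,a]$ and concave on $[a,\tau_2]$, so neither the chord nor the tangent argument applies directly. For this I would estimate $g(\tau_2)-g(\tau_1)=\int_{\tau_1}^{\tau_2}g'\le(\tau_2-\tau_1)\max_{[\tau_1,\tau_2]}g'$ and use that $g'=p(p+\nu)g_{p-1,p+\nu-2}$ is unimodal on $[0,1]$ (its derivative is a non-negative multiple of $g_{p-2,p+\nu-4}$, which changes sign at most once), reducing matters to $g'(\tau_2)\ge\nu\max_{[\tau_1,\tau_2]}g'$; its worst case $\tau_2=1$ amounts to $\max_{[0,1]}g_{p-1,p+\nu-2}\le g_{p-1,p+\nu-2}(1)/\nu$, which I expect to follow — with equality at $\nu=1$ — from the explicit boundary values in Proposition~\ref{proposition:boundary_values_of_polynomials}. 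The auxiliary single-sign-change property of $g_{p-2,p+\nu-4}$ on $[0,1]$ underlying the shape facts should itself be proved by a short induction on $p$ from the recursion in Lemma~\ref{lemma:formula_for_polynomials_without_derivatives}.
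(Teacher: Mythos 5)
Your reduction to showing $h(\tau_2):=g'(\tau_2)(\tau_2-\tau_1)-\nu(g(\tau_2)-g(\tau_1))\ge 0$ matches the paper's first step, and your treatment of $\tau_1=0$ is correct and complete: it is equivalent to the special case $\tau_1=0$ of the paper's Lemma~\ref{lemma:inequality_qgpm2}, and your induction showing that $g_{p,p-2+\nu}$ is maximized at $0$ on $[0,1]$ is sound. The chord and tangent arguments for the configurations where $g$ is entirely concave or entirely convex on $[0,\tau_2]$ are also fine.

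The genuine gap is in the remaining configuration ($p$ even, $\tau_2$ past the inflection point), which is precisely the hard case, and two unproved claims are load-bearing there. First, the shape facts --- that $g_{p-2,p+\nu-4}$ changes sign at most once on $[0,1]$ and only from $+$ to $-$, hence that $g'$ is unimodal --- are only asserted; the recursion of Lemma~\ref{lemma:formula_for_polynomials_without_derivatives} couples $g_{p,p+\nu-2}$ to $g_{p-1,(p-1)+(\nu-1)}$ and $g_{p-2,(p-2)+(\nu-2)}$, i.e.\ to polynomials with a \emph{different} shift of $q$ relative to $p$, so the ``short induction'' would have to carry a whole family of sign-pattern statements and is not routine. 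Second, and more seriously, the final inequality $\max_{[0,1]}g_{p-1,p-2+\nu}\le g_{p-1,p-2+\nu}(1)/\nu$ cannot ``follow from the explicit boundary values in Proposition~\ref{proposition:boundary_values_of_polynomials}'': the maximum of $g_{p-1,p-2+\nu}$ on $[0,1]$ is attained at an interior point (already for $p=4$ it sits at $\tau=1/\sqrt{2-\nu}$), and Proposition~\ref{proposition:maximal_absolute_value_of_polynomial} does not apply since here $q=p-2+\nu<p-1$. The inequality appears to be true numerically, but proving it is a problem of essentially the same difficulty as the proposition itself. The paper avoids this entirely: after the same identity from Lemma~\ref{lemma:formula_for_qmpg}, it shows that $\tau_1\mapsto\bigl(\nu g(\tau_1)-(p+\nu)g_{p,p-2+\nu}(\tau_2)\bigr)/\tau_1$ is decreasing (Lemma~\ref{lemma:auxiliary_function_for_monotone_fraction}), which rests on the inductive inequality $(p+\nu)g_{p,p-2+\nu}(\tau_2)\le\nu(g(\tau_1)-\tau_1 g'(\tau_1))$ of Lemma~\ref{lemma:inequality_qgpm2} --- a uniform-in-$\tau_1$ strengthening of your $\tau_1=0$ step. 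To repair your argument you would either need to prove that strengthened inequality (at which point you have reproduced the paper's route) or supply genuine proofs of the unimodality and of the maximum-versus-endpoint bound.
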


\begin{proposition}\label{proposition:monotone_fraction1}
For any integer $p \geq 0$ and any real $\nu \in [0, 1]$, the function
\begin{equation}\label{eq:proposition:monotone_fraction1}
\ba{c}
]0, 1] \to \R: \tau \mapsto \frac{g_{p, p+\nu}(\tau)}{1 - (1-\tau)^\nu}
\ea
\end{equation}
is monotonically decreasing on $]0, 1]$.
\end{proposition}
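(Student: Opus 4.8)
The plan is to prove that the function $\phi(\tau) := g_{p,p+\nu}(\tau)/\bigl(1-(1-\tau)^\nu\bigr)$ is non-increasing by establishing $\phi'(\tau)\le 0$ on $]0,1[$ (for $\nu=0$ the denominator vanishes identically, so we take $\nu>0$); since $\phi$ is continuous at $\tau=1$, this gives monotonicity on all of $]0,1]$. Write $g:=g_{p,p+\nu}$ and $q:=p+\nu$. Differentiating $\phi$ and clearing the positive denominator, the sign of $\phi'(\tau)$ equals that of $g'(\tau)\bigl(1-(1-\tau)^\nu\bigr)-\nu(1-\tau)^{\nu-1}g(\tau)$. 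Using the elementary identity $1-(1-\tau)^\nu=(1-\tau)^{\nu-1}\bigl((1-\tau)^{1-\nu}-(1-\tau)\bigr)$ and factoring out the positive number $(1-\tau)^{\nu-1}$, it remains to prove
$$
\bigl((1-\tau)^{1-\nu}-(1-\tau)\bigr)\,g'(\tau)\;\le\;\nu\,g(\tau),\qquad \tau\in\,]0,1[.\qquad(\star)
$$

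Next I would linearize the awkward coefficient on the left. By the weighted arithmetic--geometric mean inequality applied to $(1-\tau)^{1-\nu}\cdot 1^{\nu}$ one gets $(1-\tau)^{1-\nu}\le(1-\nu)(1-\tau)+\nu$, hence $(1-\tau)^{1-\nu}-(1-\tau)\le\nu\tau$ on $[0,1]$. Since $q\ge p$, Proposition~\ref{proposition:monotonicity_of_polynomials} gives $g'(\tau)\ge 0$ on $[0,1]$, so the left-hand side of $(\star)$ is at most $\nu\tau g'(\tau)$, and $(\star)$ follows from the purely polynomial inequality
$$
\tau\,g_{p,p+\nu}'(\tau)\;\le\;g_{p,p+\nu}(\tau),\qquad \tau\in[0,1].\qquad(\star\star)
$$

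I would prove $(\star\star)$ by induction on $p$ in steps of two, with $\nu$ held fixed. For $p\in\{0,1\}$ it is immediate from the explicit formulas $g_{0,\nu}\equiv 1$ and $g_{1,1+\nu}(\tau)=(1+\nu)\tau$ (equality in the latter case). For $p\ge 2$, put $\Delta_p(\tau):=g_{p,p+\nu}(\tau)-\tau g_{p,p+\nu}'(\tau)$. Subtracting $\tau$ times the derivative identity of Lemma~\ref{lemma:formula_for_derivative_of_polynomial_1} from Definition~\ref{definition:polynomials}, the terms $(q-p+1)\tau g_{p-1,q}$ cancel and one finds $\Delta_p=(1-\tau^2)\bigl(g_{p-1,q}'-\tau g_{p-1,q}''\bigr)+(1-\nu)\tau^2 g_{p-1,q}'$; substituting $g_{p-1,q}'=(p-1)q\,g_{p-2,q-2}$ and $g_{p-1,q}''=(p-1)q\,g_{p-2,q-2}'$ from Lemma~\ref{lemma:formula_for_derivative_of_polynomial_3}, and noting $q-2=(p-2)+\nu$, this becomes the clean recursion
$$
\Delta_p(\tau)=(p-1)(p+\nu)\Bigl[(1-\tau^2)\,\Delta_{p-2}(\tau)+(1-\nu)\tau^2\,g_{p-2,p+\nu-2}(\tau)\Bigr].
$$
On $[0,1]$ every factor on the right is non-negative: $\Delta_{p-2}\ge 0$ by the induction hypothesis, $g_{p-2,p+\nu-2}\ge 0$ by Proposition~\ref{proposition:nonnegativity_of_polynomials}, and $1-\tau^2$, $1-\nu$, $(p-1)(p+\nu)$ are all $\ge 0$. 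Hence $\Delta_p\ge 0$, which proves $(\star\star)$ and therefore the proposition.

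The main obstacle is discovering this chain of reductions: in particular, peeling off the factor $(1-\tau)^{\nu-1}$ and dominating $(1-\tau)^{1-\nu}-(1-\tau)$ by $\nu\tau$ turns the transcendental inequality into the polynomial statement $(\star\star)$, and then one must spot the two-step induction that keeps $\nu$ fixed and feeds on the identity $g_{p-1,q}'=(p-1)q\,g_{p-2,q-2}$. Once the recursion for $\Delta_p$ is written down, its positivity is transparent, and verifying the underlying algebraic identity for $\Delta_p$ is a routine manipulation of the lemmas already established.
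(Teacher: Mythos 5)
Your proof is correct, and it takes a genuinely different (and lighter) route than the paper's. The paper reduces the sign condition on the derivative to the inequality $(p+\nu)\, g_{p,p-2+\nu}(\tau) \geq -(1-(1-\tau)^{1-\nu})\, g_{p,p+\nu}'(\tau)$ via Lemma~\ref{lemma:formula_for_qmpg}, and then proves that inequality by a separate, fairly involved induction (Lemma~\ref{lemma:auxiliary_inequality_for_monotone_fraction1}), which forces it to control the sign behaviour of the ``subcritical'' polynomials $g_{p,p-2+\nu}$. You instead peel off the factor $(1-\tau)^{\nu-1}$, dominate $(1-\tau)^{1-\nu}-(1-\tau)$ by $\nu\tau$ using weighted AM--GM, and reduce everything to $\tau\, g_{p,p+\nu}'(\tau) \leq g_{p,p+\nu}(\tau)$ together with $g_{p,p+\nu}'\geq 0$. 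This avoids Lemma~\ref{lemma:formula_for_qmpg} and Lemma~\ref{lemma:auxiliary_inequality_for_monotone_fraction1} entirely. Note, moreover, that your inequality $(\star\star)$ is literally inequality \eqref{eq:inequality_g_minus_tau_derivative_1} of Lemma~\ref{lemma:inequality_g_minus_tau_derivative}, which the paper has already established at this point (by essentially the same two-step induction you give; your version sharpens the paper's inequality \eqref{eq:lemma_inequality_g_minus_tau_derivative} to an exact identity for $\Delta_p$, which is a nice bonus but not needed). So you could simply cite that lemma and your argument would be a few lines long. What the paper's heavier route buys is not clear to me --- your estimate $(1-\tau)^{1-\nu}-(1-\tau)\le\nu\tau$ is exactly the concavity bound the paper itself uses at the end of the proof of Lemma~\ref{lemma:auxiliary_inequality_for_monotone_fraction1}, just deployed earlier and to greater effect. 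The only caveat, which you already flag, is the degenerate case $\nu=0$, where the function in the statement is undefined; that is an issue with the proposition as stated, not with your proof.
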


Let us assume for a moment that these propositions are
already proved. Then, the proof of
Theorem~\ref{theorem:holder_constant_of_polynomials} is
simple.

\begin{proof}
Let $0 \leq \tau_1 < \tau_2 \leq 1$. From Proposition~\ref{proposition:monotone_fraction},
we know that
$$
\ba{c}
\frac{ g_{p, p+\nu}(\tau_2) - g_{p, p+\nu}(\tau_1) }{ (\tau_2 - \tau_1)^\nu }
\leq \frac{ g_{p, p+\nu}(1) - g_{p, p+\nu}(\tau_1) }{ (1-\tau_1)^\nu }.
\ea
$$
Therefore, to prove \eqref{eq:holder_constant_of_polynomials}, it remains to show that
$$
\ba{c}
\frac{ g_{p, p+\nu}(1) - g_{p, p+\nu}(\tau_1) }{ (1-\tau_1)^\nu }
\leq \prod_{i=1}^p (\nu+i).
\ea
$$

\balert{Recall that, by Proposition~\ref{proposition:boundary_values_of_polynomials}, we have
$\prod_{i=1}^p (\nu+i) = g_{p, p+\nu}(1)$.} Thus, the
inequality we need to prove is
$$
\ba{c}
\frac{ g_{p, p+\nu}(1) - g_{p, p+\nu}(\tau_1) }{ (1-\tau_1)^\nu } \leq g_{p, p+\nu}(1),
\ea
$$
or, equivalently,
$$
\ba{c}
\frac{ g_{p, p+\nu}(\tau_1) }{ 1 - (1-\tau_1)^\nu } \geq g_{p, p+\nu}(1).
\ea
$$
But this follows from Proposition~\ref{proposition:monotone_fraction1}.
\end{proof}

Our goal now is to prove Proposition~\ref{proposition:monotone_fraction}
and Proposition~\ref{proposition:monotone_fraction1}.

We start with Proposition~\ref{proposition:monotone_fraction}. It requires three
technical lemmas.

\begin{lemma}\label{lemma:inequality_g_minus_tau_derivative}
For any integer $p \geq 0$, and any real $\nu, \tau \in [0, 1]$,
\begin{equation}\label{eq:inequality_g_minus_tau_derivative_1}
\ba{c}
g_{p, p+\nu}(\tau) \geq \tau g_{p, p+\nu}'(\tau).
\ea
\end{equation}
Moreover, when $p \geq 2$,
\begin{equation}\label{eq:lemma_inequality_g_minus_tau_derivative}
\ba{rl}
g_{p, p+\nu}(\tau) & - \tau g_{p, p+\nu}'(\tau) \\
&\geq (1-\tau^2) (p-1) (p+\nu) (g_{p-2, p-2+\nu}(\tau) - \tau g_{p-2, p-2+\nu}'(\tau)).
\ea
\end{equation}
\end{lemma}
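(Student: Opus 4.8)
The plan is to work with the shorthand $\psi_p(\tau) := g_{p,\,p+\nu}(\tau) - \tau\, g_{p,\,p+\nu}'(\tau)$, so that \eqref{eq:inequality_g_minus_tau_derivative_1} becomes the assertion $\psi_p(\tau)\ge 0$ on $[0,1]$ and \eqref{eq:lemma_inequality_g_minus_tau_derivative} becomes $\psi_p(\tau)\ge (1-\tau^2)(p-1)(p+\nu)\,\psi_{p-2}(\tau)$. Both will follow from a single exact identity expressing $\psi_p$ through $\psi_{p-2}$ and $g_{p-2,\,p-2+\nu}$, valid for $p\ge 2$, together with an easy induction covering the base cases.

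To produce that identity I would start from the derivative-free recursion of Lemma~\ref{lemma:formula_for_polynomials_without_derivatives}, written for $q=p+\nu$ as $g_{p,\,p+\nu}=(1-\tau^2)(p-1)(p+\nu)\,g_{p-2,\,p-2+\nu}+(\nu+1)\tau\,g_{p-1,\,p+\nu}$, differentiate it to get an expression for $g_{p,\,p+\nu}'$, multiply by $\tau$, and subtract from the recursion for $g_{p,\,p+\nu}$. The two terms linear in $g_{p-1,\,p+\nu}$ cancel, and after recognising $g_{p-2,\,p-2+\nu}-\tau g_{p-2,\,p-2+\nu}'=\psi_{p-2}$ one is left with $\psi_p=(p-1)(p+\nu)[(1-\tau^2)\psi_{p-2}+2\tau^2 g_{p-2,\,p-2+\nu}]-(\nu+1)\tau^2 g_{p-1,\,p+\nu}'$. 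The remaining derivative is removed with Lemma~\ref{lemma:formula_for_derivative_of_polynomial_3} (applied at index $p-1\ge 1$), which gives $g_{p-1,\,p+\nu}'=(p-1)(p+\nu)\,g_{p-2,\,p-2+\nu}$; substituting and using $2-(\nu+1)=1-\nu$ yields
$$
\psi_p(\tau)=(p-1)(p+\nu)[(1-\tau^2)\,\psi_{p-2}(\tau)+(1-\nu)\,\tau^2\,g_{p-2,\,p-2+\nu}(\tau)],\qquad p\ge 2 .
$$

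Once this is in hand the rest is short. For $\tau,\nu\in[0,1]$ one has $1-\tau^2\ge 0$, $1-\nu\ge 0$, $(p-1)(p+\nu)\ge 0$, and $g_{p-2,\,p-2+\nu}\ge 0$ on $[0,1]$ by Proposition~\ref{proposition:nonnegativity_of_polynomials} (its hypothesis $q\ge p-1$ holds, since $p-2+\nu\ge(p-2)-1$); hence the last summand in the bracket is nonnegative, and dropping it gives exactly \eqref{eq:lemma_inequality_g_minus_tau_derivative}. For \eqref{eq:inequality_g_minus_tau_derivative_1} I would induct on $p$: the cases $p=0$ and $p=1$ are immediate from Definition~\ref{definition:polynomials} ($\psi_0\equiv 1$ and $\psi_1\equiv 0$), and for $p\ge 2$ the displayed identity together with the inductive hypothesis $\psi_{p-2}\ge 0$ and the nonnegativity of $g_{p-2,\,p-2+\nu}$ forces $\psi_p\ge 0$.

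The only genuinely nontrivial point is choosing the right recursion to differentiate. The naive attempt, applying Lemma~\ref{lemma:formula_for_qmpg} directly at $q=p+\nu$, quickly produces $\psi_p=(1-\nu)g_{p,\,p+\nu}+(p+\nu)g_{p,\,p-2+\nu}$, which is useless for a sign argument: the second polynomial has its $q$-parameter two units below its order $p$, so Proposition~\ref{proposition:nonnegativity_of_polynomials} does not apply to it. One must instead lower the order by two at once — which restores $q\ge p-1$ — and that is precisely what differentiating the $p\to p-2$ recursion \eqref{eq:formula_for_polynomials_without_derivatives} accomplishes. Beyond that, the derivation of the identity is just careful bookkeeping with the product and chain rules, which I would not spell out in full.
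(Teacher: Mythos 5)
Your proof is correct and takes essentially the same route as the paper's: both reduce $\psi_p(\tau):=g_{p,p+\nu}(\tau)-\tau g_{p,p+\nu}'(\tau)$ to $\psi_{p-2}$ via the recursions and Lemma~\ref{lemma:formula_for_derivative_of_polynomial_3}, both discard exactly the same nonnegative quantity $(p-1)(p+\nu)(1-\nu)\tau^2 g_{p-2,p-2+\nu}(\tau)$ (the paper drops it earlier in the form $(1-\nu)\tau^2 g_{p-1,p+\nu}'(\tau)$, justified by Proposition~\ref{proposition:monotonicity_of_polynomials}), and both then obtain \eqref{eq:inequality_g_minus_tau_derivative_1} by a step-two induction from the base cases $p=0,1$. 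The only cosmetic difference is that you start from Lemma~\ref{lemma:formula_for_polynomials_without_derivatives} and first record an exact identity for $\psi_p$ (which is correct --- it checks out for $p=2,3$ --- and slightly sharper as a statement), whereas the paper starts from Lemma~\ref{lemma:formula_for_derivative_of_polynomial_1} and works with inequalities throughout; the resulting bounds are identical.
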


\begin{proof}
First, let us prove \eqref{eq:lemma_inequality_g_minus_tau_derivative}.
By Lemma~\ref{lemma:formula_for_derivative_of_polynomial_1}, we have
$$
\ba{c}
g_{p, p+\nu}'(\tau) = (1-\tau^2) g_{p-1, p+\nu}''(\tau)
+ (\nu-1) \tau g_{p-1, p+\nu}'(\tau) + (\nu+1) g_{p-1, p+\nu}(\tau).
\ea
$$
Since $g_{p-1, p+\nu}'(\tau) \geq 0$
(Proposition~\ref{proposition:monotonicity_of_polynomials})
and $\nu \leq 1$, it follows that
$$
\ba{c}
g_{p, p+\nu}'(\tau) \leq (1-\tau^2) g_{p-1, p+\nu}''(\tau) + (\nu+1) g_{p-1, p+\nu}(\tau).
\ea
$$
At the same time, by Definition~\ref{definition:polynomials},
$$
\ba{c}
g_{p, p+\nu}(\tau) = (1-\tau^2) g_{p-1, p+\nu}'(\tau) + 
\alert{(\nu+1) \tau g_{p-1, p+\nu}(\tau)}.
\ea
$$
Thus,
$$
\ba{c}
g_{p, p+\nu}(\tau) - \tau g_{p, p+\nu}'(\tau)
\geq (1-\tau^2) (g_{p-1, p+\nu}'(\tau) - \tau g_{p-1, p+\nu}''(\tau)).
\ea
$$
Applying Lemma~\ref{lemma:formula_for_derivative_of_polynomial_3}, we obtain that
$$
\ba{rl}
g_{p-1, p+\nu}'(\tau) &= (p-1) (p+\nu) g_{p-2, p-2+\nu}(\tau), \\
g_{p-1, p+\nu}''(\tau) &= (p-1) (p+\nu) g_{p-2, p-2+\nu}'(\tau),
\ea
$$
and \eqref{eq:lemma_inequality_g_minus_tau_derivative} follows.

It remains to prove \eqref{eq:inequality_g_minus_tau_derivative_1}.
For $p=0$, we have $g_{p, p+\nu}(\tau) = 1$ (Definition~\ref{definition:polynomials}),
hence $\tau g_{p, p+\nu}'(\tau) = 0$, and \eqref{eq:inequality_g_minus_tau_derivative_1}
is indeed true. For $p=1$, by Definition~\ref{definition:polynomials}, we have
$g_{p, p+\nu}(\tau) = (p+\nu) \tau$,
hence $\tau g_{p, p+\nu}'(\tau) = (p+\nu) \tau$,
and \eqref{eq:inequality_g_minus_tau_derivative_1} is again true.
The general case $p \geq 2$ easily follows
from \eqref{eq:lemma_inequality_g_minus_tau_derivative} by induction.\qed
\end{proof}

\begin{lemma}\label{lemma:inequality_qgpm2}
For any integer $p \geq 0$, any real $\nu \in [0, 1]$, and
$0 \leq \tau_1 \leq \tau_2 \leq 1$,
\begin{equation}\label{eq:lemma_inequality_qgpm2}
\ba{c}
(p+\nu) g_{p, p-2+\nu}(\tau_2) \leq \nu (g_{p, p+\nu}(\tau_1) - \tau_1 g_{p, p+\nu}'(\tau_1)).
\ea
\end{equation}
\end{lemma}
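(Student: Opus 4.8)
The plan is to reduce the two-variable inequality \eqref{eq:lemma_inequality_qgpm2} to a one-variable statement by separating the roles of $\tau_1$ and $\tau_2$. First I would observe that the left-hand side depends only on $\tau_2$ and is nondecreasing in $\tau_2$ on $[0,1]$: indeed, since $q' := p-2+\nu \ge p-2 = p' - (2-\nu) $ may fall slightly below $p$, one should instead note that $g_{p, p-2+\nu}$ is, up to the positive factor $p(p-2+\nu)$ coming from Lemma~\ref{lemma:formula_for_derivative_of_polynomial_3} applied backwards, proportional to the derivative of $g_{p+1, p-1+\nu}$, and then invoke Proposition~\ref{proposition:monotonicity_of_polynomials} only where the hypotheses permit; alternatively, and more robustly, I would directly check monotonicity of $g_{p, p-2+\nu}$ on $[0,1]$ by the non-negativity of $g_{p,q}'$ from Proposition~\ref{proposition:monotonicity_of_polynomials} combined with Lemma~\ref{lemma:formula_for_derivative_of_polynomial_3}. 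Granting this, the worst case for the left side is $\tau_2 = 1$, so it suffices to prove
$$
\ba{c}
(p+\nu) g_{p, p-2+\nu}(1) \le \nu \bigl( g_{p, p+\nu}(\tau_1) - \tau_1 g_{p, p+\nu}'(\tau_1) \bigr)
\ea
$$
for all $\tau_1 \in [0,1]$.

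Next I would handle the right-hand side. By Lemma~\ref{lemma:inequality_g_minus_tau_derivative}, the function $\tau_1 \mapsto g_{p, p+\nu}(\tau_1) - \tau_1 g_{p, p+\nu}'(\tau_1)$ is non-negative on $[0,1]$, but I need its \emph{minimum}. The recursion \eqref{eq:lemma_inequality_g_minus_tau_derivative} shows this quantity dominates $(1-\tau_1^2)(p-1)(p+\nu)$ times the analogous quantity at level $p-2$; iterating this down to the base cases $p=0$ (value $1$) and $p=1$ (value $0$) suggests the minimum over $[0,1]$ is attained at $\tau_1 = 1$, where $g_{p, p+\nu}(1) - g_{p, p+\nu}'(1)$ can be evaluated in closed form: by Proposition~\ref{proposition:boundary_values_of_polynomials}, $g_{p, p+\nu}(1) = \prod_{i=0}^{p-1}(p+\nu-i)$, and by Lemma~\ref{lemma:formula_for_derivative_of_polynomial_3} together with that same proposition, $g_{p, p+\nu}'(1) = p(p+\nu) g_{p-1, p-2+\nu}(1) = p(p+\nu)\prod_{i=0}^{p-2}(p-2+\nu-i)$. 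So I would try to show $g_{p,p+\nu}(\tau_1) - \tau_1 g_{p,p+\nu}'(\tau_1) \ge g_{p,p+\nu}(1) - g_{p,p+\nu}'(1)$ by differentiating: $\frac{d}{d\tau}\bigl(g - \tau g'\bigr) = -\tau g''$, which is $\le 0$ on $[0,1]$ once $g'' \ge 0$ there — and $g'' = g_{p,p+\nu}'' $ is, by two applications of Lemma~\ref{lemma:formula_for_derivative_of_polynomial_3}, a positive multiple of $g_{p-2, p-4+\nu}$, whose non-negativity on $[0,1]$ needs care since its $q$-parameter is $p-4+\nu$, possibly below $p-3$. This is the one delicate point; I would resolve it either by noting $p-4+\nu \ge (p-2)-1$ exactly when $\nu \ge 1$, which fails, and hence instead by using Lemma~\ref{lemma:inequality_g_minus_tau_derivative} directly: equation \eqref{eq:lemma_inequality_g_minus_tau_derivative} already tells me the function decreases as I move inward in a controlled way, so monotonic decrease of $g - \tau g'$ on $[0,1]$ can be extracted by induction from \eqref{eq:lemma_inequality_g_minus_tau_derivative} without ever needing $g''\ge 0$ globally.

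It then remains to verify the purely arithmetic inequality
$$
\ba{c}
(p+\nu)\, g_{p, p-2+\nu}(1) \le \nu \bigl( g_{p, p+\nu}(1) - g_{p, p+\nu}'(1) \bigr),
\ea
$$
which, after substituting the closed forms above, reads
$$
\ba{c}
(p+\nu)\prod_{i=0}^{p-1}(p-2+\nu-i) \;\le\; \nu\Bigl( \prod_{i=0}^{p-1}(p+\nu-i) - p(p+\nu)\prod_{i=0}^{p-2}(p-2+\nu-i) \Bigr).
\ea
$$
I would prove this by induction on $p$, pulling out the common factor $\prod_{i=0}^{p-2}(p-2+\nu-i)$ from both sides and reducing to a comparison of the remaining linear-in-$\nu$ factors; the base cases $p=0,1$ are immediate (for $p \le 1$ both sides are easily checked directly, the left side even being non-positive when $p\le 1$ since $g_{p,p-2+\nu}(1)$ contains a non-positive factor), and the inductive step should collapse to an elementary polynomial identity in $p$ and $\nu$. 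I expect the main obstacle to be exactly the monotonicity claim for $g_{p,p+\nu}(\tau) - \tau g_{p,p+\nu}'(\tau)$ on $[0,1]$ — establishing it cleanly from \eqref{eq:lemma_inequality_g_minus_tau_derivative} rather than from an unavailable global convexity statement — while the final factorial bookkeeping, though tedious, is routine.
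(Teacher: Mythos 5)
There is a genuine gap: both monotonicity claims on which your decoupling strategy rests are false, and without them the reduction to the corner $\tau_1=\tau_2=1$ does not prove the lemma. First, $g_{p,p-2+\nu}$ is \emph{not} nondecreasing on $[0,1]$. Proposition~\ref{proposition:monotonicity_of_polynomials} requires $q\geq p$, whereas here $q=p-2+\nu<p$; and indeed, by Lemma~\ref{lemma:formula_for_derivative_of_polynomial_3}, $g_{p,p-2+\nu}'=p(p-2+\nu)g_{p-1,p-4+\nu}$, where $g_{p-1,p-4+\nu}$ is not sign-definite on $[0,1]$. Concretely, for $p=2$ one has $g_{2,\nu}(\tau)=\nu[(\nu-2)\tau^2+1]$, which is \emph{decreasing} on $[0,1]$, with maximum $\nu>0$ at $\tau=0$ and value $\nu(\nu-1)\leq 0$ at $\tau=1$. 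So the worst case of the left-hand side over $\tau_2\in[\tau_1,1]$ is at $\tau_2=\tau_1$, not $\tau_2=1$, and substituting $\tau_2=1$ yields a strictly weaker (in fact nearly vacuous, since $g_{p,p-2+\nu}(1)=\prod_{i=0}^{p-1}(p-2+\nu-i)\leq 0$ always) statement that does not imply \eqref{eq:lemma_inequality_qgpm2}. For $p=2$ and $\tau_1=\tau_2=\tau$ the lemma amounts to $2(\nu-1)\tau^2\leq 0$, which is tight as $\nu\to 1$; your corner inequality carries none of this information. Second, the function $G_p(\tau):=g_{p,p+\nu}(\tau)-\tau g_{p,p+\nu}'(\tau)$ is \emph{not} minimized at $\tau=1$ either: for $p=3$ a direct computation gives $G_3(\tau)=2(1-\nu)(3+\nu)(1+\nu)\tau^3$, which is increasing with minimum $0$ at $\tau=0$. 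Inequality \eqref{eq:lemma_inequality_g_minus_tau_derivative} compares $G_p$ and $G_{p-2}$ at the \emph{same} point $\tau$ and cannot yield monotonicity of $G_p$ in $\tau$, so the proposed fallback does not repair this step.

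The underlying problem is that the constraint $\tau_1\leq\tau_2$ is essential and cannot be discarded by extremizing the two sides separately. The paper's proof keeps the coupling: it inducts on $p$ in steps of two, expands $g_{p,p-2+\nu}(\tau_2)$ via Lemma~\ref{lemma:formula_for_polynomials_without_derivatives} to expose a factor $(1-\tau_2^2)$, drops a non-positive term, uses $\tau_2\geq\tau_1$ precisely to replace $(1-\tau_2^2)$ by $(1-\tau_1^2)$ (after disposing of the case $g_{p-2,p-4+\nu}(\tau_2)\leq 0$), applies the induction hypothesis at level $p-2$, and closes with \eqref{eq:lemma_inequality_g_minus_tau_derivative}. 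You correctly anticipated that Lemma~\ref{lemma:inequality_g_minus_tau_derivative} is the key auxiliary tool and that the base cases are easy, but the reduction to a one-variable corner inequality has to be abandoned.
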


\begin{proof}
We use induction in $p$. For $p=0$, we have $g_{p,
p-2+\nu}(\tau_2) = 1$, while $g_{p, p+\nu}(\tau_1) - \tau_1
g_{p, p+\nu}'(\tau_1) = 1$ (see
Definition~\ref{definition:polynomials}), so the claim is
true. For $p=1$, we have $g_{p, p-2+\nu}(\tau_2) =
-(1-\nu) \tau_2 \leq 0$ while $g_{p, p+\nu}(\tau_1) -
\tau_1 g_{p, p+\nu}'(\tau_1) = 0$, (see
Definition~\ref{definition:polynomials}), hence the claim
is again true.

Now we prove the claim for $p \geq 2$ assuming that it is already true
for all integer $0 \leq p' \leq p-1$.
According to Lemma~\ref{lemma:formula_for_polynomials_without_derivatives}, we have
$$
\ba{rl}
g_{p, p-2+\nu}(\tau_2) =
&(1-\tau_2^2) (p-1) (p-2+\nu) g_{p-2, p-4+\nu}(\tau_2) \\
&- (1-\nu) \tau_2 g_{p-1, p-2+\nu}(\tau_2).
\ea
$$
Since $g_{p-1, p-2+\nu}(\tau_2) \geq 0$
(Proposition~\ref{proposition:nonnegativity_of_polynomials}), we further have
$$
\ba{c}
g_{p, p-2+\nu}(\tau_2) \leq (1-\tau_2^2) (p-1) (p-2+\nu) g_{p-2, p-4+\nu}(\tau_2).
\ea
$$

If $g_{p-2, p-4+\nu}(\tau_2) \leq 0$,
it follows that $g_{p, p-2+\nu}(\tau_2) \leq 0$, and the proof in this case
is finished, because the right-hand side in \eqref{eq:lemma_inequality_qgpm2} is
always non-negative in view of Lemma~\ref{lemma:inequality_g_minus_tau_derivative}.
Therefore, we can assume that $g_{p-2, p-4+\nu}(\tau_2) \geq 0$.

Since $\tau_2 \geq \tau_1$, then
$$
\ba{c}
g_{p, p-2+\nu}(\tau_2) \leq (1-\tau_1^2) (p-1) (p-2+\nu) g_{p-2, p-4+\nu}(\tau_2).
\ea
$$
Applying the inductive assumption to $p' := p-2$, we obtain
$$
\ba{c}
(p-2+\nu) g_{p-2, p-4+\nu}(\tau_2)
\leq \nu (g_{p-2, p-2+\nu}(\tau_1) - \tau_1 g_{p-2, p-2+\nu}'(\tau_1)).
\ea
$$
Hence,
$$
\ba{c}
g_{p, p-2+\nu}(\tau_2) \leq \nu (1-\tau_1^2) (p-1) (g_{p-2, p-2+\nu}(\tau_1)
- \tau_1 g_{p-2, p-2+\nu}'(\tau_1)).
\ea
$$
Thus, to finish the proof, it remains to show that
$$
\ba{rl}
(1-\tau_1^2) (p-1) (p+\nu) (g_{p-2, p-2+\nu}&(\tau_1) - \tau_1 g_{p-2, p-2+\nu}'(\tau_1)) \\
& \leq g_{p, p+\nu}(\tau_1) - \tau_1 g_{p, p+\nu}'(\tau_1).
\ea
$$
But this is guaranteed by Lemma~\ref{lemma:inequality_g_minus_tau_derivative}.\qed
\end{proof}

\begin{lemma}\label{lemma:auxiliary_function_for_monotone_fraction}
For any integer $p \geq 0$, and any real $\nu, \tau_2 \in [0, 1]$, the function
\begin{equation}\label{eq:lemma_auxiliary_function_for_monotone_fraction}
\ba{c}
]0, +\infty[ \to \R:
\tau_1 \mapsto \frac{ \nu g_{p, p+\nu}(\tau_1) - (p+\nu) g_{p, p-2+\nu}(\tau_2) }{\tau_1}
\ea
\end{equation}
is monotonically decreasing on $]0, \tau_2]$.
\end{lemma}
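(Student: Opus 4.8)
The function in question is
$$
\phi(\tau_1) := \frac{ \nu g_{p, p+\nu}(\tau_1) - (p+\nu) g_{p, p-2+\nu}(\tau_2) }{\tau_1},
$$
where $\tau_2$ is regarded as a fixed parameter. I would prove monotonicity by differentiating with respect to $\tau_1$ and showing the derivative is non-positive on $]0, \tau_2]$. Writing $c := (p+\nu) g_{p, p-2+\nu}(\tau_2)$ for the (constant-in-$\tau_1$) numerator term, we have $\phi(\tau_1) = \bigl(\nu g_{p, p+\nu}(\tau_1) - c\bigr)/\tau_1$, so
$$
\phi'(\tau_1) = \frac{\nu \tau_1 g_{p, p+\nu}'(\tau_1) - \nu g_{p, p+\nu}(\tau_1) + c}{\tau_1^2}.
$$
Hence $\phi'(\tau_1) \leq 0$ on $]0, \tau_2]$ is equivalent to
$$
(p+\nu) g_{p, p-2+\nu}(\tau_2) \leq \nu\bigl(g_{p, p+\nu}(\tau_1) - \tau_1 g_{p, p+\nu}'(\tau_1)\bigr)
\qquad \text{for all } \tau_1 \in \,]0, \tau_2].
$$

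But this is exactly the content of Lemma~\ref{lemma:inequality_qgpm2}, whose hypothesis $0 \leq \tau_1 \leq \tau_2 \leq 1$ is satisfied here since $\tau_1 \in \,]0, \tau_2]$ and $\tau_2 \in [0,1]$. So the argument is: compute $\phi'$, reduce the sign condition to the displayed inequality, and invoke Lemma~\ref{lemma:inequality_qgpm2}. One should also note that $\phi$ is differentiable on $]0, +\infty[$ because $g_{p, p+\nu}$ is a polynomial and $\tau_1 \mapsto 1/\tau_1$ is smooth there, so non-positivity of $\phi'$ on the interval indeed gives monotone decrease of $\phi$ on $]0, \tau_2]$.

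There is essentially no obstacle: the lemma has been arranged precisely so that the derivative computation lands on Lemma~\ref{lemma:inequality_qgpm2}. The only point requiring a small amount of care is bookkeeping in the differentiation — keeping track that the term $(p+\nu) g_{p, p-2+\nu}(\tau_2)$ is constant with respect to $\tau_1$, so that in $\phi' = \frac{\tau_1 \cdot (\text{numerator})' - (\text{numerator})}{\tau_1^2}$ this term contributes $+\,c$ to the numerator of $\phi'$ with the correct sign. After that, dividing through by $\tau_1^2 > 0$ and multiplying the remaining inequality by $-1$ (reversing it) produces exactly the statement of Lemma~\ref{lemma:inequality_qgpm2}. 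I would therefore present this as a two-line proof: state $\phi'$, and then say ``the inequality $\phi'(\tau_1) \le 0$ is equivalent, after clearing the positive denominator $\tau_1^2$, to \eqref{eq:lemma_inequality_qgpm2}, which holds by Lemma~\ref{lemma:inequality_qgpm2} since $0 < \tau_1 \le \tau_2 \le 1$.''
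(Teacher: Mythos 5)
Your proof is correct and follows exactly the paper's argument: differentiate, obtain the numerator $\nu(\tau_1 g_{p,p+\nu}'(\tau_1) - g_{p,p+\nu}(\tau_1)) + (p+\nu)g_{p,p-2+\nu}(\tau_2)$ over $\tau_1^2$, and observe that its non-positivity on $]0,\tau_2]$ is precisely Lemma~\ref{lemma:inequality_qgpm2}. Nothing is missing.
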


\begin{proof}
The function \eqref{eq:lemma_auxiliary_function_for_monotone_fraction} is differentiable
with derivative
$$
\ba{c}
\frac{ \nu (\tau_1 g_{p, p+\nu}'(\tau_1) - g_{p, p+\nu}(\tau_1))
+ (p+\nu) g_{p, p-2+\nu}(\tau_2) }{ \tau_1^2 },
\ea
$$
which is non-positive on $]0, \tau_2]$ by Lemma~\ref{lemma:inequality_qgpm2}.\qed
\end{proof}

Now we can present the proof of Proposition~\ref{proposition:monotone_fraction}:

\begin{proof}
Since \eqref{eq:proposition_monotone_fraction} is differentiable,
it suffices to prove that its derivative
$$
\ba{c}
\frac{g_{p, p+\nu}'(\tau_2)}{(\tau_2 - \tau_1)^\nu}
- \frac{\nu (g_{p, p+\nu}(\tau_2) - g_{p, p+\nu}(\tau_1))}{(\tau_2 - \tau_1)^{\nu+1}}
\ea
$$
is non-negative for all $0 < \tau_1 < \tau_2 \leq 1$, or, equivalently, that
$$
\ba{c}
g_{p, p+\nu}'(\tau_2) (\tau_2 - \tau_1) \geq \nu (g_{p, p+\nu}(\tau_2) - g_{p, p+\nu}(\tau_1)).
\ea
$$
By Lemma~\ref{lemma:formula_for_qmpg},
\begin{equation}\label{eq:proposition:monotone_fraction:proof:eq1}
\ba{c}
\nu g_{p, p+\nu}(\tau_2) = \tau_2 g_{p, p+\nu}'(\tau_2) + (p+\nu) g_{p, p-2+\nu}(\tau_2).
\ea
\end{equation}
Therefore, it is enough to prove that
$$
\ba{c}
\nu g_{p, p+\nu}(\tau_1) - (p+\nu) g_{p, p-2+\nu}(\tau_2) \geq \tau_1 g_{p, p+\nu}'(\tau_2),
\ea
$$
or, equivalently,
\begin{equation}\label{eq:monotone_fraction_proof1}
\ba{c}
\frac{ \nu g_{p, p+\nu}(\tau_1) - (p+\nu) g_{p, p-2+\nu}(\tau_2) }{ \tau_1 }
\geq g_{p, p+\nu}'(\tau_2).
\ea
\end{equation}
But this immediately follows from Lemma~\ref{lemma:auxiliary_function_for_monotone_fraction}
using \eqref{eq:proposition:monotone_fraction:proof:eq1}.\qed
\end{proof}

It remains to prove Proposition~\ref{proposition:monotone_fraction1}. For this, we need
one more lemma.

\begin{lemma}\label{lemma:auxiliary_inequality_for_monotone_fraction1}
For any integer $p \geq 0$, and any real $\nu, \tau \in [0, 1]$, we have
\begin{equation}\label{eq:auxiliary_inequality_for_monotone_fraction1}
\ba{c}
(p+\nu) g_{p, p-2+\nu}(\tau) \geq -(1 - (1-\tau)^{1-\nu}) g_{p, p+\nu}'(\tau).
\ea
\end{equation}
\end{lemma}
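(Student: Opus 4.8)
The statement I want to prove is the inequality
$$
\ba{c}
(p+\nu) g_{p, p-2+\nu}(\tau) \geq -(1 - (1-\tau)^{1-\nu}) g_{p, p+\nu}'(\tau)
\ea
$$
for all integer $p \geq 0$ and real $\nu, \tau \in [0, 1]$. I expect this to be handled by induction on $p$, in exactly the same spirit as Lemmas~\ref{lemma:inequality_g_minus_tau_derivative} and \ref{lemma:inequality_qgpm2}: reduce the degree $p$ by two using Lemma~\ref{lemma:formula_for_polynomials_without_derivatives} and a companion identity for the derivative, then match up the pieces. The base cases $p=0$ and $p=1$ should be immediate from Definition~\ref{definition:polynomials}: for $p=0$ the left side is $p+\nu = \nu \ge 0$ and the right side equals $-(1-(1-\tau)^{1-\nu})\cdot 0 = 0$; for $p=1$ we have $g_{1, \nu-1}(\tau) = (\nu-1)\tau$, $g_{1, 1+\nu}'(\tau) = 1+\nu$, so the claimed inequality reads $(1+\nu)(\nu-1)\tau \geq -(1-(1-\tau)^{1-\nu})(1+\nu)$, i.e. $(1-(1-\tau)^{1-\nu}) \geq (1-\nu)\tau$, which is concavity (or Bernoulli) for the map $t\mapsto t^{1-\nu}$ on $[0,1]$.

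For the inductive step $p \ge 2$, the plan is: first apply Lemma~\ref{lemma:formula_for_polynomials_without_derivatives} (with the shift $q = p-2+\nu$) to write $g_{p, p-2+\nu}(\tau)$ as $(1-\tau^2)(p-1)(p-2+\nu) g_{p-2, p-4+\nu}(\tau) - (1-\nu)\tau g_{p-1, p-2+\nu}(\tau)$; the last term is $\le 0$ by Proposition~\ref{proposition:nonnegativity_of_polynomials}, so as in Lemma~\ref{lemma:inequality_qgpm2} I can drop it (after multiplying by $(p+\nu)$), reducing to the case $g_{p-2, p-4+\nu}(\tau) \ge 0$ (if it is $\le 0$, then $g_{p, p-2+\nu}(\tau)\le 0$ and we are stuck — so actually the sign analysis must be done carefully; note that when $g_{p-2,p-4+\nu}(\tau) \le 0$ the left side can be negative, so I would instead need a direct argument, perhaps bounding $g_{p,p+\nu}'$ separately). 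Second, on the right-hand side I use Lemma~\ref{lemma:formula_for_derivative_of_polynomial_3} to write $g_{p, p+\nu}'(\tau) = p(p+\nu) g_{p-1, p-2+\nu}(\tau)$, and then Lemma~\ref{lemma:formula_for_polynomials_without_derivatives} once more to expand $g_{p-1, p-2+\nu}$ in terms of $g_{p-3, p-4+\nu}$ and $g_{p-2, p-2+\nu}$ — or, more likely, I instead expand $g_{p,p+\nu}'$ via Lemma~\ref{lemma:formula_for_derivative_of_polynomial_1} so that the $(1-\tau^2) g_{p-1,\cdot}''$ term reappears and can be matched, using Lemma~\ref{lemma:formula_for_derivative_of_polynomial_3} again, against $g_{p-2, p-4+\nu}'$, which is what the inductive hypothesis at level $p-2$ controls.

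The crux is to arrange everything so that the inductive hypothesis at level $p-2$, namely $(p-2+\nu) g_{p-2, p-4+\nu}(\tau) \ge -(1-(1-\tau)^{1-\nu}) g_{p-2, p-2+\nu}'(\tau)$, plugs in cleanly, together with the factor $(1-\tau^2)(p-1)$ from the degree-reduction, and — crucially — with Lemma~\ref{lemma:formula_for_derivative_of_polynomial_3} converting $g_{p-2, p-2+\nu}'$ back into $(p-2)(p-2+\nu) g_{p-3, p-4+\nu}$ so that both sides are expressed in the same family. I anticipate that the ratio $1-(1-\tau)^{1-\nu}$ is exactly the quantity preserved by this recursion (it does not shift when $p \mapsto p-2$), which is why it appears unchanged on both levels; the main obstacle will be the bookkeeping of the sign conditions — specifically handling the branch where $g_{p-2, p-4+\nu}(\tau)$ is negative, where one must argue directly (likely via $g_{p,p+\nu}'(\tau) = p(p+\nu)g_{p-1,p-2+\nu}(\tau)$ and a crude bound $1-(1-\tau)^{1-\nu}\ge 0$ combined with monotonicity of $g_{p-1,p-2+\nu}$ from Proposition~\ref{proposition:monotonicity_of_polynomials} where applicable) rather than through the inductive hypothesis. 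Once the sign cases are separated, the remaining inequality should collapse to the $p-2$ hypothesis multiplied by a manifestly non-negative factor plus a non-negative remainder, finishing the induction.
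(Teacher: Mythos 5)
Your overall strategy (induction on $p$ with a step of two, using Lemma~\ref{lemma:formula_for_polynomials_without_derivatives} to expand $g_{p,p-2+\nu}$ and Lemma~\ref{lemma:formula_for_derivative_of_polynomial_3} to convert $g_{p-2,p-2+\nu}'$ into $g_{p-1,p+\nu}''$ so that the inductive hypothesis at level $p-2$ can be matched against a second derivative) is the right one, and your base cases are correct. But the inductive step as sketched has a genuine gap, and the specific move you propose goes in the wrong direction. After Lemma~\ref{lemma:formula_for_polynomials_without_derivatives} you get
$$
\ba{c}
g_{p, p-2+\nu}(\tau) = (1-\tau^2)(p-1)(p-2+\nu)\, g_{p-2, p-4+\nu}(\tau) - (1-\nu)\tau\, g_{p-1, p-2+\nu}(\tau),
\ea
$$
and since the last term is $\leq 0$, dropping it produces an \emph{upper} bound on $g_{p,p-2+\nu}(\tau)$ --- but the lemma requires a \emph{lower} bound on the left-hand side, so this term cannot be discarded (unlike in Lemma~\ref{lemma:inequality_qgpm2}, where an upper bound was wanted). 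This is precisely why you then find yourself forced into a sign split on $g_{p-2,p-4+\nu}(\tau)$ with no workable plan for the negative branch.

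The paper's proof avoids both problems. It keeps the term $-(p+\nu)(1-\nu)\tau\, g_{p-1,p-2+\nu}(\tau)$ and first establishes the intermediate inequality
$$
\ba{rl}
(p+\nu) g_{p, p-2+\nu}(\tau) \geq& -(1-\tau^2)(1 - (1-\tau)^{1-\nu})\, g_{p-1, p+\nu}''(\tau) \\
& - (p+\nu)(1-\nu)\tau\, g_{p-1, p-2+\nu}(\tau),
\ea
$$
which follows from the inductive hypothesis at $p-2$ multiplied by the non-negative constant $(p+\nu)(p-1)$, together with the identity $g_{p-1,p+\nu}'' = (p+\nu)(p-1)\, g_{p-2,p-2+\nu}'$; no case analysis on the sign of $g_{p-2,p-4+\nu}$ is ever needed. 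The second, and in your sketch entirely missing, ingredient is the bound $(1-\tau^2) g_{p-1,p+\nu}''(\tau) \leq g_{p,p+\nu}'(\tau) - (p+\nu) g_{p-1,p-2+\nu}(\tau)$, obtained from Lemma~\ref{lemma:formula_for_derivative_of_polynomial_2} by discarding $\nu\tau\, g_{p-1,p+\nu}'(\tau) \geq 0$. Substituting it leaves a residual term $(p+\nu)\bigl(1-(1-\nu)\tau - (1-\tau)^{1-\nu}\bigr) g_{p-1,p-2+\nu}(\tau)$, which is non-negative because $g_{p-1,p-2+\nu} \geq 0$ and $(1-\tau)^{1-\nu} \leq 1-(1-\nu)\tau$ by concavity --- the very inequality you used in your $p=1$ base case. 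Without this pairing of the kept $(1-\nu)\tau$-term against the $(p+\nu) g_{p-1,p-2+\nu}$ contribution from $g_{p,p+\nu}'$, the induction does not close.
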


\begin{proof}
As usual, we use induction in $p$. The base case $p=0$ is trivial, since $g_{p, p-2+\nu}(\tau) = 1$,
while $g_{p, p+\nu}'(\tau) = 0$ (see Definition~\ref{definition:polynomials}).
To prove the general case $p \geq 1$, we assume that
\eqref{eq:auxiliary_inequality_for_monotone_fraction1} is already true
for all integer $0 \leq p' \leq p-1$.

Our first step is to show that
\begin{equation}\label{eq:auxiliary_inequality_for_monotone_fraction1_proof1}
\ba{rl}
(p+\nu) g_{p, p-2+\nu}(\tau) \geq& -(1-\tau^2) (1 - (1-\tau)^{1-\nu}) g_{p-1, p+\nu}''(\tau) \\
&- (p+\nu) (1-\nu) \tau g_{p-1, p-2+\nu}(\tau).
\ea
\end{equation}
If $p=1$, we have $g_{p, p-2+\nu}(\tau) = -(1-\nu) \tau$,
while $g_{p-1, p+\nu}''(\tau) = 0$ and
$g_{p-1, p-2+\nu}(\tau) = 1$ (see Definition~\ref{definition:polynomials}),
so \eqref{eq:auxiliary_inequality_for_monotone_fraction1_proof1} is indeed true.
To justify it for all other $p \geq 2$, we proceed as follows.
By Lemma~\ref{lemma:formula_for_polynomials_without_derivatives}, we know that
$$
\ba{rl}
g_{p, p-2+\nu}(\tau) =&
(1-\tau^2) (p-1) (p-2+\nu) g_{p-2, p-4+\nu}(\tau) \\
& - (1-\nu) \tau g_{p-1, p-2+\nu}(\tau).
\ea
$$
Therefore, \eqref{eq:auxiliary_inequality_for_monotone_fraction1_proof1} is equivalent to
$$
\ba{c}
(p+\nu) (p-1) (p-2+\nu) g_{p-2, p-4+\nu}(\tau)
\geq -(1 - (1-\tau)^{1-\nu}) g_{p-1, p+\nu}''(\tau).
\ea
$$
By our inductive assumption \eqref{eq:auxiliary_inequality_for_monotone_fraction1},
applied to $p' := p-2$, we already have
$$
\ba{c}
(p-2+\nu) g_{p-2, p-4+\nu}(\tau) \geq -(1 - (1-\tau)^{1-\nu}) g_{p-2, p-2+\nu}'(\tau).
\ea
$$
At the same time, by Lemma~\ref{lemma:formula_for_derivative_of_polynomial_3},
$$
\ba{c}
(p+\nu) (p-1) g_{p-2, p-2+\nu}'(\tau) = g_{p-1, p+\nu}''(\tau).
\ea
$$
Thus, \eqref{eq:auxiliary_inequality_for_monotone_fraction1_proof1} is established.

Now we estimate the right-hand side
in \eqref{eq:auxiliary_inequality_for_monotone_fraction1_proof1}.
Applying Lemma~\ref{lemma:formula_for_derivative_of_polynomial_2} and the fact that
$g_{p-1, p+\nu}'(\tau) \geq 0$ (Proposition~\ref{proposition:monotonicity_of_polynomials}),
we obtain
$$
\ba{rl}
g_{p, p+\nu}'(\tau) &= (1-\tau^2) g_{p-1, p+\nu}''(\tau)
+ \nu \tau g_{p-1, p+\nu}'(\tau) + (p+\nu) g_{p-1, p-2+\nu}(\tau) \\
&\geq (1-\tau^2) g_{p-1, p+\nu}''(\tau) + (p+\nu) g_{p-1, p-2+\nu}(\tau).
\ea
$$
From this, it follows that
$$
\ba{c}
(1-\tau^2) g_{p-1, p+\nu}''(\tau) \leq g_{p, p+\nu}'(\tau) - (p+\nu) g_{p-1, p-2+\nu}(\tau).
\ea
$$

Substituting the above equation
into \eqref{eq:auxiliary_inequality_for_monotone_fraction1_proof1}, we obtain
$$
\ba{rl}
(p+\nu) g_{p, p-2+\nu}(\tau) &\geq -(1 - (1-\tau)^{1-\nu}) g_{p, p+\nu}'(\tau) \\
& + (p+\nu) (1 - (1-\nu) \tau - (1-\tau)^{1-\nu}) g_{p-1, p-2+\nu}(\tau).
\ea
$$
\balert{Since $g_{p-1, p-2+\nu}(\tau) \geq 0$ (by
Proposition~\ref{proposition:nonnegativity_of_polynomials}),
it only remains to show that $(1-\tau)^{1-\nu} \leq 1 - 
(1-\nu) \tau$.} But this follows from the concavity of power
function $\tau \mapsto (1 - \tau)^{1-\nu}$.\qed
\end{proof}

Now we can give the proof of Proposition~\ref{proposition:monotone_fraction1}:

\begin{proof}
Since \eqref{eq:proposition:monotone_fraction1} is differentiable,
it suffices to prove that its derivative
\alert{$$
\ba{c}
\frac{ ( 1 - (1-\tau)^\nu ) g_{p, p+\nu}'(\tau)
- \nu (1-\tau)^{\nu-1} g_{p, p+\nu}(\tau) }{ (1 - (1-\tau)^\nu)^2 }
\ea
$$}
is \alert{non-positive} for all $0 < \tau < 1$. By Lemma~\ref{lemma:formula_for_qmpg}, we have
$$
\ba{c}
\nu g_{p, p+\nu}(\tau) = \tau g_{p, p+\nu}'(\tau) + (p+\nu) g_{p, p-2+\nu}(\tau).
\ea
$$
Thus, we need to show that
\alert{$$
(1-\tau)^{\nu-1} \left( \tau g_{p, p+\nu}'(\tau) + (p+\nu)
g_{p, p-2+\nu} (\tau) \right) \geq (1 - (1-\tau)^\nu) g_
{p, p+\nu}'(\tau),
$$
or, equivalently (by multiplying both sides by $(1-\tau)^
{1-\nu}$), that
$$
\ba{c}
\tau g_{p, p+\nu}'(\tau) + (p+\nu) g_{p, p-2+\nu}
(\tau) \geq ((1-\tau)^{1-\nu} - 1+\tau) g_{p, p+\nu}'(\tau),
\ea
$$
or, equivalently (by moving the first term into the
right-hand side), that}
$$
\ba{c}
(p+\nu) g_{p, p-2+\nu}(\tau) \geq -(1 - (1-\tau)^{1-\nu}) g_{p, p+\nu}'(\tau).
\ea
$$
But this is given by Lemma~\ref{lemma:auxiliary_inequality_for_monotone_fraction1}.\qed
\end{proof}

To conclude this section, let us discuss the optimality
of Theorem~\ref{theorem:holder_constant_of_polynomials}.

\balert{For odd values of $p$, the obtained constant $\prod_
{i=1}^p (\nu+i)$ turns out to be optimal.} Indeed, using
$\tau_1 := 0$, $\tau_2 := 1$ in \eqref{eq:def_H_p_nu} and
taking into account
Proposition~\ref{proposition:boundary_values_of_polynomials},
we obtain that
$$
\ba{c}
H_{p, \nu} \geq \frac{g_{p, p+\nu}(\tau_2) - g_{p, p+\nu}(\tau_1)}{(\tau_2 - \tau_1)^\nu}
= g_{p, p+\nu}(1) = \prod_{i=1}^p (\nu+i).
\ea
$$

However, for even $p$, this constant is suboptimal. For
example, consider the case when $p=2$. We know that
$$
\ba{c}
g_{2, 2+\nu}(\tau) = (\nu+2) (\nu \tau^2 + 1).
\ea
$$
The corresponding optimal constant, according to
Proposition~\ref{proposition:monotone_fraction}, is
$$
\ba{c}
H_{2, \nu} = \max\limits_{0 \leq \tau < 1}
\frac{ g_{2, 2+\nu}(1) - g_{2, 2+\nu}(\tau) }{ (1-\tau)^\nu }
= \nu (\nu+2) \max\limits_{0 \leq \tau < 1} (1-\tau)^{1-\nu}
(1+\tau).
\ea
$$
Note that this maximization problem is logarithmically concave in $\tau$.
Taking the logarithm and setting the derivative to zero, we find that the maximal point
corresponds to $\tau := \frac{\nu}{2-\nu} \in [0, 1]$, and the corresponding optimal value is
$$
\ba{c}
H_{2, \nu} = \nu (\nu+2) \frac{2^{2-\nu} (1-\nu)^{1-\nu}}{(2-\nu)^{2-\nu}} \leq (\nu+1) (\nu+2).
\ea
$$
Of course, the last inequality is strict for all $0 \leq \nu < 1$.

\section{H\"older Continuity of Derivatives of Powers of Euclidean Norm}
\label{section:holder_contiuity_of_derivatives}

We have established the main properties of polynomials
$g_{p, q}$ and obtained an explicit upper bound on their
H\"older constant. Hence, we are ready to prove the
H\"older continuity of derivatives of powers of
Euclidean norm. Let us start with a simple result, that
gives us a lower bound on the H\"older constant.

\begin{theorem}\label{theorem:lower_bound_for_holder_constant}
For any integer $p \geq 0$, and any real $\nu \in [0, 1]$, the H\"older constant
of $D^p f_{p+\nu}$, corresponding to degree $\nu$, cannot be smaller than
\begin{equation}\label{eq:definition_of_C_p_nu}
\ba{c}
C_{p, \nu} :=
\begin{cases}
\prod_{i=1}^p (\nu+i), & \text{if $p$ is even}, \\
2^{1-\nu} \prod_{i=1}^p (\nu+i), & \text{if $p$ is odd}.
\end{cases}
\ea
\end{equation}
\end{theorem}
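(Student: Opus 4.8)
The plan is to prove the lower bound by exhibiting two specific points $x_1, x_2 \in \E$ at which the difference $\| D^p f_{p+\nu}(x_2) - D^p f_{p+\nu}(x_1) \|$ divided by $\| x_2 - x_1 \|^\nu$ equals $C_{p,\nu}$; this suffices since the H\"older constant is by definition the supremum of this ratio. The natural candidates come from Theorem~\ref{theorem:formula_for_derivative}: the formula $D^p f_{p+\nu}(x)[h]^p = \| x \|^{\nu} g_{p, p+\nu}(\tau_h(x))$ together with \eqref{eq:norm_of_symmetric_multilinear_operator} tells us that the norm of $D^p f_{p+\nu}(x)$ is $\| x \|^\nu \max_{\| h \| = 1} |g_{p,p+\nu}(\tau_h(x))|$, and since $\tau_h(x)$ ranges over all of $[-1,1]$ as $h$ varies over the unit sphere (for $x \neq 0$, take $h$ along $B^{-1}x$ to get $\tau_h = 1$, and interpolate), this norm equals $\| x \|^\nu \max_{[-1,1]} |g_{p,p+\nu}|$, which by Proposition~\ref{proposition:maximal_absolute_value_of_polynomial} equals $\| x \|^\nu \prod_{i=1}^p (\nu+i)$.

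First I would choose a unit vector $e \in \E$ (say with $\| e \| = 1$) and consider points along the line $\R e$. \textbf{For $p$ even:} take $x_1 = 0$ and $x_2 = e$. Then $D^p f_{p+\nu}(x_1) = 0$ by Theorem~\ref{theorem:formula_for_derivative}, and $\| D^p f_{p+\nu}(e) \| = \prod_{i=1}^p(\nu+i)$ by the computation above, while $\| x_2 - x_1 \| = 1$, giving ratio exactly $\prod_{i=1}^p(\nu+i) = C_{p,\nu}$. \textbf{For $p$ odd:} here $g_{p,p+\nu}$ is odd, so $g_{p,p+\nu}(0)=0$ and the trick with $x_1=0$ only yields $\prod_{i=1}^p(\nu+i)$, missing the factor $2^{1-\nu}$. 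Instead take $x_1 = -e$ and $x_2 = e$, so $\| x_2 - x_1 \| = 2$ and $\| x_2 - x_1 \|^\nu = 2^\nu$. The key point is that by oddness, $D^p f_{p+\nu}(-e) = -D^p f_{p+\nu}(e)$ (as $p$-multilinear forms: $D^p f_{p+\nu}(-e)[h]^p = \|-e\|^\nu g_{p,p+\nu}(\tau_h(-e)) = g_{p,p+\nu}(-\tau_h(e)) = -g_{p,p+\nu}(\tau_h(e)) = -D^p f_{p+\nu}(e)[h]^p$), so $D^p f_{p+\nu}(e) - D^p f_{p+\nu}(-e) = 2 D^p f_{p+\nu}(e)$, whose norm is $2 \prod_{i=1}^p(\nu+i)$. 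Dividing by $2^\nu$ gives $2^{1-\nu}\prod_{i=1}^p(\nu+i) = C_{p,\nu}$.

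I expect the main (minor) obstacle to be the clean justification that $\| D^p f_{p+\nu}(e) \| = \prod_{i=1}^p(\nu+i)$: one must invoke \eqref{eq:norm_of_symmetric_multilinear_operator} to reduce the operator norm to $\max_{\|h\|=1}|D^p f_{p+\nu}(e)[h]^p|$, then use Theorem~\ref{theorem:formula_for_derivative} and the observation that $h \mapsto \tau_h(e)$ surjects onto $[-1,1]$ (by Cauchy--Schwarz it lands in $[-1,1]$, and $h = \pm B^{-1}e / \| B^{-1}e \|_*$-type choices hit $\pm 1$, with a continuity/intermediate-value argument filling in the rest), and finally Proposition~\ref{proposition:maximal_absolute_value_of_polynomial} with $q = p+\nu \geq p$. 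Everything else is a short direct computation once the right pair of points is chosen; the only genuinely distinct idea between the two parities is the use of oddness of $g_{p,p+\nu}$ to double the operator-norm gap while the base-point separation doubles only by a factor $2$, producing the extra $2^{1-\nu}$.
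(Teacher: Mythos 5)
Your proof is correct and follows essentially the same route as the paper's: the same test points ($x_2$ a unit vector, $x_1=0$ for even $p$ and $x_1=-x_2$ for odd $p$) and the same use of the parity of $g_{p,p+\nu}$ to gain the factor $2$ against $\|x_2-x_1\|^\nu = 2^\nu$. The only difference is cosmetic: you compute the full operator norm $\|D^p f_{p+\nu}(e)\|$ via surjectivity of $h\mapsto\tau_h(e)$ onto $[-1,1]$ and Proposition~\ref{proposition:maximal_absolute_value_of_polynomial}, whereas the paper simply evaluates the forms at the single direction $h=x_2$ (so that $\tau_h=\pm1$ and Proposition~\ref{proposition:boundary_values_of_polynomials} applies), which already suffices for a lower bound on the norm of the difference; also note the maximizing $h$ is $\pm e$ itself, not $B^{-1}e$, since the norm is induced by $B$.
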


\begin{proof}
According to \eqref{eq:norm_of_symmetric_multilinear_operator}, we need to show that
$$
\ba{c}
| D^p f_{p+\nu}(x_2)[h]^p - D^p f_{p+\nu}(x_1)[h]^p |
\geq C_{p, \nu} \| x_2 - x_1 \|^\nu
\ea
$$
for some $x_1, x_2 \in \E$ and some unit $h \in \E$.
\balert{Let us choose an arbitrary unit vector $h \in \E$,
and set $x_2 := h$. By
Theorem~\ref{theorem:formula_for_derivative} and
Proposition~\ref{proposition:boundary_values_of_polynomials},}
$$
\ba{c}
D^p f_{p+\nu}(x_2)[h]^p = \| x_2 \|^\nu g_{p, p+\nu}(1)
= \prod_{i=1}^p (\nu+i).
\ea
$$
To specify $x_1$, we consider two cases.

\balert{If $p$ is even, set $x_1 := 0$. Then, $D^p
f_{p+\nu}(x_1)[h]^p = 0$ by
Theorem~\ref{theorem:formula_for_derivative}, and}
$$
\ba{c}
| D^p f_{p+\nu}(x_2)[h]^p - D^p f_{p+\nu}(x_1)[h]^p |
= \prod_{i=1}^p (\nu+i),
\ea
$$
which is exactly $C_{p, \nu} \| x_2 - x_1 \|^\nu$. 
\balert{If $p$ is odd, we take $x_1 := -h$. This gives us}
$$
\ba{c}
D^p f_{p+\nu}(x_1)[h]^p = \| x_1 \|^\nu g_{p, p+\nu}(-1)
= -\prod_{i=1}^p (\nu+i),
\ea
$$
\balert{where we apply Proposition~\ref{proposition:parity}
to rewrite $g_{p, p+\nu}(-1) = g_{p, p+\nu}(1)$. Hence,}
$$
\ba{c}
| D^p f_{p+\nu}(x_2)[h]^p - D^p f_{p+\nu}(x_1)[h]^p |
= 2 \prod_{i=1}^p (\nu+i),
\ea
$$
which is again precisely $C_{p, \nu} \| x_2 - x_1 \|^\nu$.\qed
\end{proof}

Next we prove H\"older continuity with the optimal
constant along any line, passing through the origin.

\begin{theorem}\label{theorem:holder_continuity_on_line}
For any integer $p \geq 0$, and any real $\nu \in [0, 1]$,
\balert{the restriction of $D^p f_{p+\nu}$ to a line, passing
through the origin, is $\nu$-H\"older continuous with
constant $C_{p, \nu}$.}
\end{theorem}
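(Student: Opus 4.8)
The plan is to parametrize the line and reduce the whole statement, via Theorem~\ref{theorem:formula_for_derivative} together with \eqref{eq:norm_of_symmetric_multilinear_operator}, to a one-dimensional H\"older estimate for the scalar map $t \mapsto \operatorname{sgn}(t)^p |t|^\nu$. That estimate is then dispatched by the same concavity arguments already used in the proof of Theorem~\ref{theorem:extending_holder_continuity_for_polynomials}. First I would fix a unit vector $e \in \E$ and write the line as $\{ t e : t \in \R \}$, so that any two of its points are $x_1 = t_1 e$ and $x_2 = t_2 e$ with $t_1, t_2 \in \R$; the goal becomes $\| D^p f_{p+\nu}(x_2) - D^p f_{p+\nu}(x_1) \| \leq C_{p, \nu} |t_2 - t_1|^\nu$. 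Since the difference of two symmetric $p$-multilinear forms is again symmetric, \eqref{eq:norm_of_symmetric_multilinear_operator} lets me compute this norm as $\max_{\|u\| = 1} | D^p f_{p+\nu}(x_2)[u]^p - D^p f_{p+\nu}(x_1)[u]^p |$.

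Next, for a fixed unit $u$, the generalized Cauchy--Schwarz inequality gives $s := \langle B e, u \rangle \in [-1, 1]$, and from \eqref{eq:definition_tau_h_x} one has $\tau_u(t e) = \operatorname{sgn}(t)\, s$. Substituting into \eqref{eq:formula_for_derivative} and using the parity of $g_{p, p+\nu}$ (Proposition~\ref{proposition:parity}), I obtain
$$
\ba{c}
D^p f_{p+\nu}(t e)[u]^p = |t|^\nu\, g_{p, p+\nu}(\operatorname{sgn}(t)\, s) = \operatorname{sgn}(t)^p |t|^\nu\, g_{p, p+\nu}(s),
\ea
$$
so that the $u$-dependence factors out of the difference:
$$
\ba{c}
D^p f_{p+\nu}(x_2)[u]^p - D^p f_{p+\nu}(x_1)[u]^p = g_{p, p+\nu}(s) \bigl( \phi(t_2) - \phi(t_1) \bigr), \qquad \phi(t) := \operatorname{sgn}(t)^p |t|^\nu.
\ea
$$
Since $p + \nu \geq p$, Proposition~\ref{proposition:maximal_absolute_value_of_polynomial} bounds $|g_{p, p+\nu}(s)| \leq \max_{[-1,1]} |g_{p, p+\nu}| = \prod_{i=1}^p (\nu + i)$ uniformly in $u$. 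Hence, after taking the maximum over $u$, it remains to prove the scalar inequality $|\phi(t_2) - \phi(t_1)| \leq \bigl( C_{p, \nu} / \prod_{i=1}^p (\nu + i) \bigr) |t_2 - t_1|^\nu$, i.e.\ with constant $1$ when $p$ is even and $2^{1-\nu}$ when $p$ is odd.

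For even $p$ we have $\phi(t) = |t|^\nu$, and the required bound follows from $\bigl| |t_2|^\nu - |t_1|^\nu \bigr| \leq \bigl| |t_2| - |t_1| \bigr|^\nu \leq |t_2 - t_1|^\nu$, using subadditivity and monotonicity of $\tau \mapsto \tau^\nu$ on $[0, +\infty[$. For odd $p$ the function $\phi$ is odd, so by flipping the signs of both $t_1$ and $t_2$ I may assume $t_2 \geq 0$. If $t_1 \geq 0$ as well, the claim reduces to the even case and $2^{1-\nu} \geq 1$ gives room to spare. If $t_1 < 0$, then $\phi(t_2) - \phi(t_1) = t_2^\nu + (-t_1)^\nu$ while $t_2 - t_1 = t_2 + (-t_1)$, so the inequality becomes $a^\nu + b^\nu \leq 2^{1-\nu}(a + b)^\nu$ for $a, b \geq 0$, which is precisely the concavity of $\tau \mapsto \tau^\nu$ (the same estimate invoked at the end of the proof of Theorem~\ref{theorem:extending_holder_continuity_for_polynomials}). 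Combining these with the bound on $|g_{p,p+\nu}(s)|$ yields the asserted H\"older inequality with constant $C_{p, \nu}$. (The degenerate case $\nu = 0$ is immediate: for even $p$, $D^p f_p$ is constant on $\E$, and for odd $p$ one compares points $t_1 e, t_2 e$ off the origin, where $\phi$ takes the values $\pm 1$ and the above bookkeeping still applies.)

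The genuinely routine parts here are the sign bookkeeping (carried out through Proposition~\ref{proposition:parity}) and the choice of parametrization; the one place where something substantive is used is the opposite-sign case for odd $p$, where the factor $2^{1-\nu}$ enters. This is no accident: comparing with Theorem~\ref{theorem:lower_bound_for_holder_constant} (where the extremal pair is exactly $x_1 = -h$, $x_2 = h$), one sees that it is precisely this configuration that forces the constant to be $C_{p,\nu}$ rather than $\prod_{i=1}^p(\nu+i)$, so no slack is available in that step and the constant obtained along the line is optimal.
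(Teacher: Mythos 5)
Your proof is correct and follows essentially the same route as the paper's: both reduce via \eqref{eq:norm_of_symmetric_multilinear_operator} and Theorem~\ref{theorem:formula_for_derivative} to bounding $\bigl|\,\|x_2\|^\nu g_{p,p+\nu}(\tau_2)-\|x_1\|^\nu g_{p,p+\nu}(\tau_1)\bigr|$, factor out $\max_{[-1,1]}|g_{p,p+\nu}|=\prod_{i=1}^p(\nu+i)$ via Proposition~\ref{proposition:maximal_absolute_value_of_polynomial}, and then split into the same-sign case (handled by $r_2^\nu-r_1^\nu\le (r_2-r_1)^\nu$) and the opposite-sign odd-$p$ case (handled by concavity, producing the factor $2^{1-\nu}$). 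Your explicit parametrization $\phi(t)=\operatorname{sgn}(t)^p|t|^\nu$ is merely a notational repackaging of the paper's case analysis on whether $g_{p,p+\nu}(\tau_1)=\pm g_{p,p+\nu}(\tau_2)$.
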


\begin{proof}
Let $x_1, x_2 \in \E$ be arbitrary points, lying on a line,
passing through the origin, and let $h \in \E$ be an
arbitrary unit vector. According to
\eqref{eq:norm_of_symmetric_multilinear_operator} and
Theorem~\ref{theorem:formula_for_derivative}, we need to
show that
$$
\ba{c}
| \| x_2 \|^\nu g_{p, p+\nu}(\tau_2) - \| x_1 \|^\nu g_{p, p+\nu}(\tau_1) |
\leq C_{p, \nu} \| x_2 - x_1 \|^\nu,
\ea
$$
where $\tau_1 := \tau_h(x_1)$, $\tau_2 := \tau_h(x_2)$.

Observe that this inequality is symmetric in $x_1$ and $x_2$ and is invariant
when we replace the pair $(x_1, x_2)$ with $(-x_1, -x_2)$.
Therefore, we can assume that $\| x_2 \| \geq \| x_1 \|$ and $\tau_2 \geq 0$.

\balert{Since $x_1$ and $x_2$ lie on a line, passing through
the origin, $\tau_1$ and $\tau_2$ can differ only in sign.
Hence, by Proposition~\ref{proposition:parity}, we have
two options: either $g_{p, p+\nu}(\tau_1) = g_{p,
p+\nu}(\tau_2)$ or $g_{p, p+\nu}(\tau_1) = -g_{p,
p+\nu}(\tau_2)$.

\underline{Case I}. Suppose $g_{p, p+\nu}(\tau_1) = g_{p,
p+\nu}(\tau_2)$ (while $\tau_1$ can be of any sign). Then,}
$$
\ba{c}
| \| x_2 \|^\nu g_{p, p+\nu}(\tau_2) - \| x_1 \|^\nu g_{p, p+\nu}(\tau_1) |
= |g_{p, p+\nu}(\tau_2)| (\| x_2 \|^\nu - \| x_1 \|^\nu).
\ea
$$
By Proposition~\ref{proposition:maximal_absolute_value_of_polynomial} and
\eqref{eq:definition_of_C_p_nu}, we know that
\begin{equation}\label{eq:holder_continuity_on_line_proof1}
\ba{c}
|g_{p, p+\nu}(\tau_2)| \leq \prod_{i=1}^p (\nu+i) \leq C_{p, \nu}.
\ea
\end{equation}
\balert{Thus, it suffices to prove that $\| x_2 \|^\nu - \|
x_1 \|^\nu \leq \| x_2 - x_1 \|^\nu$.} But this follows from
the well-known inequality $r_2^\nu - r_1^\nu \leq (r_2 -
r_1)^\nu$ (which is valid for any real $0 \leq r_1 \leq
r_2$) combined with the reverse triangle inequality.

\balert{\underline{Case II}. \alert{Suppose $g_{p, p+\nu}
(\tau_1) = -g_{p, p+\nu}(\tau_2)$ ($\neq 0$)}. By
Proposition~\ref{proposition:parity} \alert{and
Proposition~\ref{proposition:nonnegativity_of_polynomials}},
this happens only if $p$ is odd and $\tau_1 \leq 0$. Thus
$\tau_1 = -\tau_2$, and}
\begin{equation}\label{eq:holder_continuity_on_line_proof4}
\ba{c}
| \| x_2 \|^\nu g_{p, p+\nu}(\tau_2) - \| x_1 \|^\nu g_{p, p+\nu}(\tau_1) |
= |g_{p, p+\nu}(\tau_2)| (\| x_2 \|^\nu + \| x_1 \|^\nu).
\ea
\end{equation}
\balert{Due to \eqref{eq:holder_continuity_on_line_proof1},
it remains to prove that $\| x_2 \|^\nu + \| x_1 \|^\nu \leq
2^{1-\nu} \| x_2 - x_1
\|^\nu$. But this is immediate. Indeed, $\| x_2 \|^\nu + \|
x_1 \|^\nu \leq 2^{1-\nu} (\| x_2 \| + \| x_1 \|)^\nu$ by
the concavity of the
power function $t \mapsto t^\nu$, while $\| x_2 \| + \| x_1
\| = \| x_2 - x_1 \|$ since the segment $
[x_1, x_2]$ contains the origin.}.\qed
\end{proof}

Our final step is to extend H\"older continuity from lines,
passing through the origin, onto the whole space. The main
instrument for doing this is exploiting H\"older
continuity of $g_{p, p+\nu}$, that we studied in
Section~\ref{section:holder_constants_of_polynomials}.

\begin{theorem}\label{theorem:holder_continuity_on_whole_space}
For any integer $p \geq 0$, and any real $\nu \in [0, 1]$,
$D^p f_{p+\nu}$ is $\nu$-H\"older continuous
with constant
\begin{equation}\label{eq:definition_tilde_A}
\ba{c}
A_{p, \nu} :=
\begin{cases}
(p-1)!! \prod_{i=1}^{p/2} (\nu + 2i) + H_{p, \nu}, & \text{if $p$ is even}, \\
2^{1-\nu} \prod_{i=1}^p (\nu + i), & \text{if $p$ is odd},
\end{cases}
\ea
\end{equation}
where $H_{p, \nu}$ is the constant of $\nu$-H\"older continuity
of $g_{p, p+\nu}$ on $[0, 1]$. In particular, $D^p f_{p+\nu}$ is $\nu$-H\"older continuous
with constant
$$
\ba{c}
\tilde{A}_{p, \nu} :=
\begin{cases}
(p-1)!! \prod_{i=1}^{p/2} (\nu + 2i) + \prod_{i=1}^p (\nu+i), & \text{if $p$ is even}, \\
2^{1-\nu} \prod_{i=1}^p (\nu+i), & \text{if $p$ is odd}.
\end{cases}
\ea
$$
\end{theorem}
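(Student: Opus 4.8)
The plan is to reduce the general problem to the two results we already have in hand: H\"older continuity of $D^p f_{p+\nu}$ along lines through the origin (Theorem~\ref{theorem:holder_continuity_on_line}, which gives constant $C_{p,\nu}$) and H\"older continuity of the scalar polynomial $g_{p, p+\nu}$ on $[-1,1]$ (Theorem~\ref{theorem:extending_holder_continuity_for_polynomials}, giving constant $\tilde{H}_{p,\nu}$). For the odd case there is nothing new to do: $C_{p,\nu} = 2^{1-\nu}\prod_{i=1}^p(\nu+i) = A_{p,\nu}$, so Theorem~\ref{theorem:holder_continuity_on_line} already covers arbitrary $x_1, x_2$; the claim for $\tilde A_{p,\nu}$ follows since $H_{p,\nu}$ plays no role when $p$ is odd. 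So the whole argument is about even $p$.

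For even $p$, fix arbitrary $x_1, x_2 \in \E$ and a unit vector $h$. By \eqref{eq:norm_of_symmetric_multilinear_operator} and Theorem~\ref{theorem:formula_for_derivative}, it suffices to bound $|\,\|x_2\|^\nu g_{p,p+\nu}(\tau_2) - \|x_1\|^\nu g_{p,p+\nu}(\tau_1)\,|$ where $\tau_i := \tau_h(x_i)$, by $A_{p,\nu}\|x_2-x_1\|^\nu$. The idea is to split along an intermediate ``radial'' point. WLOG $\|x_2\| \ge \|x_1\|$. Write $x_1' := \frac{\|x_1\|}{\|x_2\|} x_2$ if $x_2 \neq 0$ (the point on the ray through $x_2$ with the same norm as $x_1$; if $x_2 = 0$ then $x_1 = 0$ and there is nothing to prove). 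Then insert $D^p f_{p+\nu}(x_1')[h]^p = \|x_1\|^\nu g_{p,p+\nu}(\tau_2)$ and use the triangle inequality to break the difference into
$$
\bigl|\,\|x_2\|^\nu g_{p,p+\nu}(\tau_2) - \|x_1\|^\nu g_{p,p+\nu}(\tau_2)\,\bigr|
+ \bigl|\,\|x_1\|^\nu g_{p,p+\nu}(\tau_2) - \|x_1\|^\nu g_{p,p+\nu}(\tau_1)\,\bigr|.
$$
The first term equals $|g_{p,p+\nu}(\tau_2)|\,(\|x_2\|^\nu - \|x_1\|^\nu) \le \prod_{i=1}^p(\nu+i)\cdot(\|x_2\|^\nu - \|x_1\|^\nu)$ by Proposition~\ref{proposition:maximal_absolute_value_of_polynomial}. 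Hmm, but this coefficient is $\prod_{i=1}^p(\nu+i)$, not $(p-1)!!\prod_{i=1}^{p/2}(\nu+2i)$; I would instead bound $|g_{p,p+\nu}(\tau_2)| \le g_{p,p+\nu}(1) = \prod_{i=1}^p(\nu+i)$ only if that is what's needed, but in fact the claimed first summand in $A_{p,\nu}$ is $g_{p,p+\nu}(0) = (p-1)!!\prod_{i=1}^{p/2}(\nu+2i)$ by Proposition~\ref{proposition:boundary_values_of_polynomials}. So the correct splitting must instead route through $\tau = 0$: write the difference as $[\,\|x_2\|^\nu g(\tau_2) - \|x_1\|^\nu g(0)\,] - [\,\|x_1\|^\nu g(\tau_1) - \|x_1\|^\nu g(0)\,]$ minus a correction, or more cleanly, use that the norm-map difference $|\,\|x_2\|^\nu - \|x_1\|^\nu\,|$ should be paired with the boundary value $g(0)$ via a homogeneity/scaling argument along the ray. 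The cleanest route: bound $|\,\|x_2\|^\nu g(\tau_2) - \|x_1\|^\nu g(\tau_1)\,| \le |\,\|x_2\|^\nu - \|x_1\|^\nu\,|\cdot|g(\tau_2)|$ only when $g(\tau_1)=g(\tau_2)$; otherwise interpolate. Given the structure of $A_{p,\nu}$, the intended decomposition is almost surely: the ``radial'' part contributes $g_{p,p+\nu}(0)$ times $(\|x_2\|^\nu-\|x_1\|^\nu) \le \|x_2-x_1\|^\nu$ after one checks $|g(\tau_2)|\le g(0)$ fails in general, so instead one uses the monotone-fraction machinery. I would therefore handle the first term by writing $\|x_2\|^\nu g(\tau_2) - \|x_1\|^\nu g(\tau_2) = g(\tau_2)(\|x_2\|^\nu - \|x_1\|^\nu)$ and bounding $g(\tau_2)$ by $g(1) = \prod(\nu+i)$ when $g(\tau_2)\ge 0$, which it always is for even $p$ and $\tau_2 \in [-1,1]$ by Proposition~\ref{proposition:nonnegativity_of_polynomials} (even polynomials, $q = p+\nu \ge p-1$). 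The second term is $\|x_1\|^\nu |g_{p,p+\nu}(\tau_2) - g_{p,p+\nu}(\tau_1)| \le \|x_1\|^\nu \tilde{H}_{p,\nu}|\tau_2 - \tau_1|^\nu = \|x_1\|^\nu H_{p,\nu}|\tau_2-\tau_1|^\nu$ (even $p$) by Theorem~\ref{theorem:extending_holder_continuity_for_polynomials}.

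The main obstacle, and the place requiring care, is the geometric estimate tying $(\|x_2\|^\nu - \|x_1\|^\nu) + \|x_1\|^\nu|\tau_2 - \tau_1|^\nu$ to $\|x_2 - x_1\|^\nu$ with the right coefficients — i.e. proving
$$
g_{p,p+\nu}(1)(\|x_2\|^\nu - \|x_1\|^\nu) + H_{p,\nu}\|x_1\|^\nu|\tau_2-\tau_1|^\nu \;\le\; A_{p,\nu}\|x_2-x_1\|^\nu
$$
for the even case, where $A_{p,\nu} = g_{p,p+\nu}(0) + H_{p,\nu}$. The key inequalities are $\|x_2\|^\nu - \|x_1\|^\nu \le \|x_2 - x_1\|^\nu$ (reverse triangle inequality plus subadditivity of $t\mapsto t^\nu$) and $\|x_1\|\,|\tau_2 - \tau_1| \le C\|x_2 - x_1\|$ for a suitable geometric constant $C$ — here $\|x_1\|\tau_i = \langle Bx_i, h\rangle$-type quantities so $\|x_1\||\tau_2-\tau_1| = \bigl|\,\|x_1\|\tau_2 - \|x_1\|\tau_1\,\bigr| = \bigl|\,\tfrac{\|x_1\|}{\|x_2\|}\langle Bx_2,h\rangle - \langle Bx_1,h\rangle\,\bigr|$, which must be controlled by $\|x_2-x_1\|$ using $\|x_1\|\le\|x_2\|$ and Cauchy--Schwarz. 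One then needs the elementary fact that $a + b \le c$ follows from $a^{1/\nu}, b^{1/\nu}$ being suitably dominated; more precisely one splits $\|x_2-x_1\|^\nu$ between the two terms. Since the coefficient in front of the radial term should collapse from $g(1)$ to $g(0)$, I expect the correct argument actually bounds $g(\tau_2)$ differently: using that along the relevant configuration $\tau_2$ is small when $\|x_2\|-\|x_1\|$ is large, so that $g(\tau_2)$ is close to $g(0)$. Pinning down this trade-off — showing the bound never exceeds $g(0) + H_{p,\nu}$ rather than the cruder $g(1) + H_{p,\nu}$ — is the technical heart of the theorem. Finally, one substitutes the bound $H_{p,\nu} \le \prod_{i=1}^p(\nu+i)$ from Theorem~\ref{theorem:holder_constant_of_polynomials} to obtain the explicit constant $\tilde A_{p,\nu}$.
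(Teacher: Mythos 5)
There are two genuine gaps. First, your dismissal of the odd case is wrong: Theorem~\ref{theorem:holder_continuity_on_line} applies only when $x_1$ and $x_2$ lie on a \emph{common} line through the origin, which arbitrary points in $\E$ do not. So the identity $C_{p,\nu}=A_{p,\nu}$ for odd $p$ does not by itself give Hölder continuity on the whole space; the odd case needs the same full spatial argument as the even one (in the paper it passes through the same case analysis, the final constant collapsing to $2^{1-\nu}H_{p,\nu}\leq 2^{1-\nu}\prod_{i=1}^p(\nu+i)$ because $g_{p,p+\nu}(0)=0$ for odd $p$).

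Second, for even $p$ you correctly observe that the naive triangle splitting through the intermediate point $\frac{\|x_1\|}{\|x_2\|}x_2$ only yields the coefficient $g_{p,p+\nu}(1)=\prod_{i=1}^p(\nu+i)$ on the radial term, hence the constant $\prod_{i=1}^p(\nu+i)+H_{p,\nu}$ rather than the claimed $A_{p,\nu}=g_{p,p+\nu}(0)+H_{p,\nu}$, and you then explicitly leave the improvement from $g(1)$ to $g(0)$ unproven (``the technical heart''). That improvement is the substance of the theorem. The paper obtains it by (i) proving the two one-sided inequalities separately; (ii) for $\|x_1\|^\nu g(\tau_1)-\|x_2\|^\nu g(\tau_2)$, using $g(\tau_2)\geq 0$ to shrink $\|x_2\|$ down to $\|x_1\|$, so that only the polynomial constant $\tilde H_{p,\nu}$ appears; (iii) for $\|x_2\|^\nu g(\tau_2)-\|x_1\|^\nu g(\tau_1)$ with $\tau_1\geq\tau_2$, projecting $x_1$ onto the line through $x_2$ and the origin and invoking Theorem~\ref{theorem:holder_continuity_on_line}; and (iv) for $\tau_1\leq\tau_2$, proving the key inequality $g(\tau_2)-g(0)\leq\rho^\nu\bigl(g(\tau_1)-g(0)\bigr)+\tilde H_{p,\nu}(\tau_2-\rho\tau_1)^\nu$ with $\rho=\|x_1\|/\|x_2\|$ by noting that the right-hand side is \emph{concave} in $\rho\in[0,1]$, so it suffices to check the endpoints $\rho=0$ and $\rho=1$, both of which reduce to the definition of $\tilde H_{p,\nu}$. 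This concavity-in-$\rho$ argument is the missing idea. A further small gap: your second term requires $\|x_1\|\,|\tau_2-\tau_1|\leq\|x_2-x_1\|$, which rests on the fact that the radial projection of $x_2$ onto the sphere of radius $\|x_1\|$ does not increase the distance to $x_1$; this needs the short explicit verification given in the paper.
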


\begin{proof}
First of all, observe that the constant $A_{p, \nu}$ is
not smaller than the corresponding lower bound $C_{p,
\nu}$ given by
Theorem~\ref{theorem:lower_bound_for_holder_constant}:
\begin{equation}\label{eq:holder_continuity_on_whole_space_proof_lower_bound}
\ba{c}
C_{p, \nu} \leq A_{p, \nu}.
\ea
\end{equation}
Indeed, for odd values of $p$, these constants coincide. When $p$ is even,
\eqref{eq:holder_continuity_on_whole_space_proof_lower_bound} follows from
the following trivial lower bound for the H\"older constant $H_{p, \nu}$:
$$
\ba{c}
H_{p, \nu} \geq g_{p, p+\nu}(1) - g_{p, p+\nu}(0)
= \prod_{i=1}^p (\nu+i) - (p-1)!! \prod_{i=1}^{p/2} (\nu+2i),
\ea
$$
where the last equality is due to Proposition~\ref{proposition:boundary_values_of_polynomials}.

Secondly, observe that we only need to prove the first
claim, since the other one follows directly from the first
one and
Theorem~\ref{theorem:holder_constant_of_polynomials}.

Let $x_1, x_2 \in \E$, and let $h \in \E$ be an arbitrary
unit vector. In view of
\eqref{eq:norm_of_symmetric_multilinear_operator} and
Theorem~\ref{theorem:formula_for_derivative}, we need to
show that
\begin{equation}\label{eq:holder_continuity_on_whole_space_proof1}
\ba{c}
| \| x_2 \|^\nu g_{p, p+\nu}(\tau_2) - \| x_1 \|^\nu g_{p, p+\nu}(\tau_1) |
\leq A_{p, \nu} \| x_2 - x_1 \|^\nu,
\ea
\end{equation}
where $\tau_1 := \tau_h(x_1)$, $\tau_2 := \tau_h(x_2)$.

Due to invariance of the above inequality to transformations of the form $(x_1, x_2) \mapsto (x_2, x_1)$
and $(x_1, x_2) \mapsto (-x_1, -x_2)$, we can assume in what follows, that $\| x_1 \| \leq \| x_2 \|$
and $\tau_2 \geq 0$. Furthermore, we can also assume that $x_1 \neq 0$ (and hence $x_2 \neq 0$),
since otherwise the claim follows from Theorem~\ref{theorem:holder_continuity_on_line}.

There are now several cases to consider.

\balert{\underline{Case I}. \alert{Suppose $g_{p, p+\nu} 
(\tau_1) < 0$.} By Propositions \ref{proposition:parity} and
\ref{proposition:nonnegativity_of_polynomials}, this
happens only if $p$ is odd and $\tau_1 \leq 0$. Then,
$g_{p, p+\nu}(\tau_1) = -g_{p, p+\nu}(-\tau_1)$, and}
$$
\ba{c}
| \| x_2 \|^\nu g_{p, p+\nu}(\tau_2) - \| x_1 \|^\nu g_{p, p+\nu}(\tau_1) |
= \| x_2 \|^\nu g_{p, p+\nu}(\tau_2) + \| x_1 \|^\nu g_{p, p+\nu}(-\tau_1),
\ea
$$
where we have removed the absolute value sign, because all terms in the right-hand side
are non-negative (see Proposition~\ref{proposition:nonnegativity_of_polynomials}).

Since $p$ is odd, $g_{p, p+\nu}(0) = 0$ (see Proposition~\ref{proposition:parity}).
Therefore, by the definition of $H_{p, \nu}$, it follows that
$$
\ba{rl}
g_{p, p+\nu}(-\tau_1) &= g_{p, p+\nu}(-\tau_1) - g_{p, p+\nu}
(0) \leq H_{p, \nu} (-\tau_1^\nu), \\
g_{p, p+\nu}(\tau_2) &= g_{p, p+\nu}(\tau_2) - g_{p, p+\nu}(0) \leq H_{p, \nu} \tau_2^\nu.
\ea
$$
Combining this with the concavity of power function $t \mapsto t^\nu$,
we obtain
$$
\ba{rl}
\| x_2 \|^\nu g_{p, p+\nu}(\tau_2) + \| x_1 \|^\nu g_{p, p+\nu}(-\tau_1)
&\leq H_{p, \nu} ( (\| x_2 \| \tau_2)^\nu + (-\| x_1 \| \tau_1)^\nu ) \\
&\leq 2^{1-\nu} H_{p, \nu} ( \| x_2 \| \tau_2 - \| x_1 \| \tau_1 )^\nu.
\ea
$$
Note that $2^{1-\nu} H_{p, \nu} \leq A_{p, \nu}$ by
Theorem~\ref{theorem:holder_constant_of_polynomials}.
\balert{Thus, it remains to show that $\| x_2 \| \tau_2 - \|
x_1 \| \tau_1 \leq \| x_2 - x_1 \|$. But this follows from
the Cauchy-Schwartz inequality since $\| x_2
\| \tau_2 - \| x_1 \| \tau_1 = \langle B (x_2 - x_1), h
\rangle$ by the definition of $\tau_1$, $\tau_2$.}

\underline{Case II}. Now suppose $g_{p, p+\nu}(\tau_1)
\geq 0$ \alert{(while $\tau_1$ can have any sign)}. We prove
\eqref{eq:holder_continuity_on_whole_space_proof1} by
proving separately two inequalities with the removed
absolute value sign.

First, we show that
\begin{equation}\label{eq:holder_continuity_on_whole_space_proof2}
\ba{c}
\| x_1 \|^\nu g_{p, p+\nu}(\tau_1) - \| x_2 \|^\nu g_{p, p+\nu}(\tau_2)
\leq A_{p, \nu} \| x_2 - x_1 \|^\nu.
\ea
\end{equation}

Let $x_2':= \frac{\| x_1 \|}{\| x_2 \|} x_2$ be the
radial projection of $x_2$ onto the sphere with radius $r :=
\| x_1 \|$, centered at the origin. Note that
\begin{equation}\label{eq:holder_continuity_on_whole_space_proof2_1}
\ba{c}
\tau_2' := \tau_h(x_2') = \tau_2,
\qquad \| x_2' \| = r \leq \| x_2 \|,
\qquad \| x_2' - x_1 \| \leq \| x_2 - x_1 \|.
\ea
\end{equation}
\alert{The first two relations are evident. The last one
follows from the fact that projections onto convex sets
decrease distances, and can be explicitly proved as follows.
First, by the Cauchy-Schwartz inequality, we have $\langle B
x_1, x_2 \rangle \leq \rho \| x_2 \|^2$, where $\rho := 
\frac{\| x_1 \|}{\| x_2 \|} \leq 1$. Therefore, using the
fact that $x_2' = \rho x_2$, we obtain
$$
\begin{aligned}
\| x_2 & - x_1 \|^2 - \| x_2' - x_1 \|^2
= \| x_2 \|^2 - \| x_2' \|^2 - 2 \langle B x_1, x_2 - x_2'
\rangle \\
&= (1 - \rho^2) \| x_2 \|^2 - 2 (1 - \rho) \langle
B x_1, x_2 \rangle \geq (1 - \rho^2) \| x_2 \|^2 - 2 (1 -
\rho) \rho \| x_2 \|^2 \\
&= (1 - \rho)^2 \| x_2 \|^2 \geq 0.
\end{aligned}
$$}

Since $g_{p, p+\nu}(\tau_2) \geq 0$ (Proposition~\ref{proposition:nonnegativity_of_polynomials}), from 
\eqref{eq:holder_continuity_on_whole_space_proof2_1} it
follows that
$$
\ba{c}
\| x_1 \|^\nu g_{p, p+\nu}(\tau_1) - \| x_2 \|^\nu g_{p, p+\nu}(\tau_2)
\leq r^\nu (g_{p, p+\nu}(\tau_1) - g_{p, p+\nu}(\tau_2')).
\ea
$$
At the same time, by Theorem~\ref{theorem:extending_holder_continuity_for_polynomials},
$$
\ba{c}
g_{p, p+\nu}(\tau_1) - g_{p, p+\nu}(\tau_2') \leq \tilde{H}_{p, \nu} |\tau_2' - \tau_1|^\nu.
\ea
$$
Hence,
\begin{equation}\label{eq:holder_continuity_on_whole_space_proof2_2}
\ba{c}
\| x_1 \|^\nu g_{p, p+\nu}(\tau_1) - \| x_2 \|^\nu g_{p, p+\nu}(\tau_2)
\leq \tilde{H}_{p, \nu} (r |\tau_2' - \tau_1|)^\nu.
\ea
\end{equation}
Note that
$$
\ba{c}
r |\tau_2' - \tau_1| = | \| x_2' \| \tau_2' - \| x_1 \| \tau_1 |
= | \langle B (x_2' - x_1), h \rangle |.
\ea
$$
Therefore, by Cauchy-Schwartz inequality and
\eqref{eq:holder_continuity_on_whole_space_proof2_1}, we have
$$
\ba{c}
r |\tau_2' - \tau_1| \leq \| x_2' - x_1 \| \leq \| x_2 - x_1 \|.
\ea
$$
Substituting this into \eqref{eq:holder_continuity_on_whole_space_proof2_2}, we obtain
$$
\ba{c}
\| x_1 \|^\nu g_{p, p+\nu}(\tau_1) - \| x_2 \|^\nu g_{p, p+\nu}(\tau_2)
\leq \tilde{H}_{p, \nu} \| x_2 - x_1 \|^\nu.
\ea
$$
This finishes the proof of \eqref{eq:holder_continuity_on_whole_space_proof2}, because
$\tilde{H}_{p, \nu} \leq A_{p, \nu}$
by Theorem~\ref{theorem:holder_constant_of_polynomials}.

It remains to show the reverse inequality
\begin{equation}\label{eq:holder_continuity_on_whole_space_proof3}
\ba{c}
\| x_2 \|^\nu g_{p, p+\nu}(\tau_2) - \| x_1 \|^\nu g_{p, p+\nu}(\tau_1)
\leq A_{p, \nu} \| x_2 - x_1 \|^\nu.
\ea
\end{equation}
For this, we consider two subcases.

\balert{\underline{Case II(a)}. Suppose $\tau_1 \geq \tau_2$.
Let $x_1' := \frac{\langle B x_1, x_2 \rangle}{ \| x_2 \|^2
} x_2$ be the projection of $x_1$ onto the line, connecting
$x_2$ with the origin, and let $\tau_1':= \tau_h
(x_1')$. Then,}
\begin{equation}\label{eq:holder_continuity_on_whole_space_proof2a_1}
\ba{c}
\| x_1' \| \leq \| x_1 \|, \qquad \| x_2 - x_1' \| \leq \| x_2 - x_1 \|.
\ea
\end{equation}
Furthermore,
\begin{equation}\label{eq:holder_continuity_on_whole_space_proof2a_2}
\ba{c}
g_{p, p+\nu}(\tau_1') \leq g_{p, p+\nu}(\tau_2).
\ea
\end{equation}
Indeed, if $\langle B x_1, x_2 \rangle \geq 0$, then $\tau_1' = \tau_2$
and $g_{p, p+\nu}(\tau_1') = g_{p, p+\nu}(\tau_2)$; otherwise $\tau_1' = -\tau_2$,
and hence $g_{p, p+\nu}(\tau_1') = (-1)^p g_{p, p+\nu}(\tau_2)$
(Proposition~\ref{proposition:parity}), which either coincides with $g_{p, p+\nu}(\tau_2)$
when $p$ is even, or becomes $-g_{p, p+\nu}(\tau_2) \leq 0$ when $p$ is odd
(see Proposition~\ref{proposition:nonnegativity_of_polynomials}).

Since $g_{p, p+\nu}(\tau_2) \leq g_{p, p+\nu}(\tau_1)$
(Proposition~\ref{proposition:monotonicity_of_polynomials}),
\balert{it follows from
\eqref{eq:holder_continuity_on_whole_space_proof2a_2} that
$g_{p, p+\nu}(\tau_1') \leq g_{p, p+\nu}(\tau_1)$.}
\alert{Using also
\eqref{eq:holder_continuity_on_whole_space_proof2a_1} and
the fact that $g_{p, p+\nu}(\tau_1) \geq 0$, we obtain $\|
x_1' \|^{\nu} g_{p, p+\nu}(\tau_1') \leq \| x_1' \|^{\nu}
g_{p, p+\nu}(\tau_1) \leq \| x_1 \|^{\nu} g_{p, p+\nu}
(\tau_1)$. Thus,}
$$
\ba{c}
\| x_2 \|^\nu g_{p, p+\nu}(\tau_2) - \| x_1 \|^\nu g_{p, p+\nu}(\tau_1)
\leq \| x_2 \|^\nu g_{p, p+\nu}(\tau_2) - \| x_1' \|^\nu g_{p, p+\nu}(\tau_1').
\ea
$$
Note that in the right-hand side, we have the difference of the derivatives
$D^p f_{p+\nu}(x_2)[h]^p$ and $D^p f_{p+\nu}(x_1')[h]^p$,
where the points $x_1'$ and $x_2$ lie on a line, passing through the origin.
Therefore, from Theorem~\ref{theorem:holder_continuity_on_line}, it follows that
$$
\ba{c}
\| x_2 \|^\nu g_{p, p+\nu}(\tau_2) - \| x_1 \|^\nu g_{p, p+\nu}(\tau_1)
\leq C_{p, \nu} \| x_2 - x_1' \|^\nu,
\ea
$$
which proves \eqref{eq:holder_continuity_on_whole_space_proof3},
in view of \eqref{eq:holder_continuity_on_whole_space_proof_lower_bound}
and \eqref{eq:holder_continuity_on_whole_space_proof2a_1}.

\underline{Case II(b)}. Now suppose $\tau_1 \leq \tau_2$. 
\alert{Denote by $\tilde{H}_{p, \nu}$ the constant of
$\nu$-H\"older continuity of the polynomial $g_{p, p+\nu}$
on the interval $[-1, 1]$.} To prove 
\eqref{eq:holder_continuity_on_whole_space_proof3}, it suffices to show that
\begin{equation}\label{eq:holder_continuity_on_whole_space_proof2b}
\ba{rl}
\| x_2 \|^\nu g_{p, p+\nu}&(\tau_2) - \| x_1 \|^\nu g_{p, p+\nu}(\tau_1) \\
&\leq g_{p, p+\nu}(0) (\| x_2 \|^\nu - \| x_1 \|^\nu)
+ \alert{\tilde{H}_{p, \nu}} ( \| x_2 \| \tau_2 - \| x_1 \|
\tau_1
)^\nu,
\ea
\end{equation}
\balert{Indeed, recall that $\| x_2 \|^\nu - \| x_1 \|^\nu
\leq \| x_2 - x_1 \|^\nu$. Also
$$
\ba{c}
( \| x_2 \| \tau_2 - \| x_1 \| \tau_1 )^\nu
= \langle B (x_2 - x_1), h \rangle^\nu \leq \| x_2 - x_1 \|^\nu
\ea
$$
by the Cauchy-Schwartz inequality.} Therefore, if 
\eqref{eq:holder_continuity_on_whole_space_proof2b} is true, then
$$
\ba{c}
\| x_2 \|^\nu g_{p, p+\nu}(\tau_2) - \| x_1 \|^\nu g_{p, p+\nu}(\tau_1)
\leq (g_{p, p+\nu}(0) + \alert{\tilde{H}_{p, \nu}}) \| x_2 -
x_1 \|^\nu,
\ea
$$
\alert{where $g_{p, p+\nu}(0) + \tilde{H}_{p, \nu} \leq A_
{p, \nu}$ in view of Proposition~\ref{proposition:boundary_values_of_polynomials},
Theorem~\ref{theorem:extending_holder_continuity_for_polynomials}
and Theorem~\ref{theorem:holder_constant_of_polynomials}.}
Thus, it remains to show \eqref{eq:holder_continuity_on_whole_space_proof2b},
or, equivalently, that
$$
\ba{rl}
\| x_2 \|^\nu &(g_{p, p+\nu}(\tau_2) - g_{p, p+\nu}(0)) \\
&\leq \| x_1 \|^\nu (g_{p, p+\nu}(\tau_1) - g_{p, p+\nu}(0))
+ \alert{\tilde{H}_{p, \nu}} ( \| x_2 \| \tau_2 - \| x_1 \|
\tau_1 )^\nu.
\ea
$$
Denote $\rho := \frac{\| x_1 \|}{\| x_2 \|} \in [0, 1]$. We need to prove that
\begin{equation}\label{eq:holder_continuity_on_whole_space_proof2b_1}
\ba{c}
g_{p, p+\nu}(\tau_2) - g_{p, p+\nu}(0)
\leq \rho^\nu (g_{p, p+\nu}(\tau_1) - g_{p, p+\nu}(0))
+ \alert{\tilde{H}_{p, \nu}} ( \tau_2 - \rho \tau_1 )^\nu.
\ea
\end{equation}
Note that the right-hand side of this inequality, as a function of $\rho \in [0, 1]$,
is \emph{concave} (and well-defined, since $\tau_1 \leq \tau_2$).
Hence, to justify \eqref{eq:holder_continuity_on_whole_space_proof2b_1},
we only need to prove the following two boundary cases:
$$
\ba{rl}
\rho=0: &\qquad g_{p, p+\nu}(\tau_2) - g_{p, p+\nu}(0) \leq
\alert{\tilde{H}_{p, \nu}} \tau_2^\nu. \\
\rho=1: &\qquad g_{p, p+\nu}(\tau_2) - g_{p, p+\nu}(\tau_1)
\leq \alert{\tilde{H}_{p, \nu}} (\tau_2-\tau_1)^\nu. \\
\ea
$$
But both of them follow from the definition of $\alert{\tilde{H}_{p, \nu}}$.\qed
\end{proof}

\balert{Comparing the result of
Theorem~\ref{theorem:holder_continuity_on_whole_space}
with the lower bound $C_{p, \nu}$, given by
Theorem~\ref{theorem:lower_bound_for_holder_constant}, we
see, that for \emph{odd} values of $p$, the constant}
$\tilde{A}_{p, \nu}$ is \emph{optimal}. Unfortunately, this
is no longer true for \emph{even}
values of $p$. Nevertheless, the constant $\tilde{A}_{p,
\nu}$ is still quite accurate. Indeed, since
$$
\ba{c}
(p-1)!! = \prod_{i=1}^{p/2} (2i-1) \leq \prod_{i=1}^{p/2} (\nu+2i-1),
\ea
$$
we have
$$
\ba{c}
(p-1)!! \prod_{i=1}^{p/2} (\nu+2i) \leq \prod_{i=1}^p (\nu+i).
\ea
$$
\balert{Thus, the constant $\tilde{A}_{p, \nu}$ is at most
two times suboptimal: $\tilde{A}_{p, \nu} \leq 2 C_{p,
\nu}$.}

One may think that the reason, why we obtained a suboptimal
bound for even values of $p$, is related to the fact that
we had used a suboptimal value for the H\"older constant
$H_{p, \nu}$ of the polynomial $g_{p, p+\nu}$ (see the
corresponding discussion at the end of
Section~\ref{section:holder_constants_of_polynomials}).
However, this is not the actual reason. Indeed, let us
look what happens when we use the optimal value for $H_{p,
\nu}$ in the particular case $p=2$. Recall that the
optimal constant in this case is
$$
\ba{c}
H_{2, \nu} = \nu (\nu+2) \frac{2^{2-\nu} (1-\nu)^{1-\nu}}{(2-\nu)^{2-\nu}}.
\ea
$$
Substituting this expression into \eqref{eq:definition_tilde_A}, we obtain an improved estimate
$$
\ba{c}
A_{2, \nu} = \nu+2 + H_{2, \nu}
= (\nu+2) \left( \nu+1 + \nu \frac{2^{2-\nu} (1-\nu)^{1-\nu}}{(2-\nu)^{2-\nu}} \right).
\ea
$$
However, this new estimate is still different from the lower bound
$$
\ba{c}
C_{2, \nu} = (\nu+1) (\nu+2).
\ea
$$
At the same time, for small values of $\nu$, the difference between
$A_{p, \nu}$ and $C_{p, \nu}$ is almost negligible.

\section{Lipschitz Constants of Derivatives of Powers of Euclidean Norm}
\label{section:lipschitz_constants_of_derivatives}

For even values of $p$, our estimate $A_{p, \nu}$ of the H\"older constant
of $D^p f_{p+\nu}$ was suboptimal. It turns out, that in the special case
when $\nu=1$, it is actually very simple to eliminate this drawback
and obtain an optimal constant for \emph{all} values of $p$.
This case corresponds to \emph{Lipschitz} continuity.

\begin{theorem}\label{theorem:lipschitz_constant}
For any integer $p \geq 0$, the derivative $D^p f_{p+1}$ is Lipschitz continuous
with constant
$$
\ba{c}
C_{p, 1} = (p+1)!,
\ea
$$
where $n!$ for a non-negative integer $n$ denotes the factorial of $n$.
\end{theorem}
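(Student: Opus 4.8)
The plan is to bound the $(p+1)$-st derivative of $f_{p+1}$ uniformly on $\E\setminus\{0\}$ and then convert this bound into a global Lipschitz estimate for $D^p f_{p+1}$, dealing with the origin by a continuity argument.

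First I would record what is already available. By Theorem~\ref{theorem:formula_for_derivative} (applicable since $q:=p+1>p$), $f_{p+1}$ is $p$ times differentiable on all of $\E$, with $D^p f_{p+1}(x)[h]^p=\|x\|\,g_{p,p+1}(\tau_h(x))$ for unit $h$ and $D^p f_{p+1}(0)=0$; moreover, since $|\tau_h(x)|\le 1$, one has $\|D^p f_{p+1}(x)\|\le\|x\|\max_{[-1,1]}|g_{p,p+1}|\to 0$ as $x\to 0$, so $D^p f_{p+1}$ is continuous on $\E$. On $\E\setminus\{0\}$ the function $f_{p+1}$ is $C^\infty$ (a composition of $x\mapsto\langle Bx,x\rangle$ and $t\mapsto t^{(p+1)/2}$), hence $D^{p+1}f_{p+1}$ exists there. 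Differentiating the above formula once more in the direction $h$, exactly as in the proof of Theorem~\ref{theorem:formula_for_derivative} (using $D\|\cdot\|(x)[h]=\tau_h(x)$ and $D\tau_h(x)[h]=(1-\tau_h^2(x))/\|x\|$) and invoking Definition~\ref{definition:polynomials} with index $p+1$ and $q=p+1$, I would obtain $D^{p+1}f_{p+1}(x)[h]^{p+1}=g_{p+1,p+1}(\tau_h(x))$ for all $x\neq 0$ and all unit $h$.

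Next I would bound $\|D^{p+1}f_{p+1}(x)\|$. By \eqref{eq:norm_of_symmetric_multilinear_operator} and $|\tau_h(x)|\le 1$, one gets $\|D^{p+1}f_{p+1}(x)\|=\max_{\|h\|=1}|g_{p+1,p+1}(\tau_h(x))|\le\max_{[-1,1]}|g_{p+1,p+1}|$ for every $x\neq 0$. Applying Proposition~\ref{proposition:maximal_absolute_value_of_polynomial} with the integer $p+1$ and the real $q=p+1$ (the hypothesis $q\ge p+1$ holding with equality), this maximum equals $\prod_{i=0}^{p}(p+1-i)=(p+1)!$. Hence $\|D^{p+1}f_{p+1}(x)\|\le(p+1)!$ throughout $\E\setminus\{0\}$.

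Finally I would deduce the Lipschitz bound for $F:=D^p f_{p+1}$, using that $DF(x)=D^{p+1}f_{p+1}(x)$, that by \eqref{eq:norm_of_symmetric_multilinear_operator} the operator norm of $DF(x)$ coincides with the multilinear-form norm of $D^{p+1}f_{p+1}(x)$, and that $\|L[v]\|\le\|L\|\,\|v\|$. The main obstacle is the origin: $D^{p+1}f_{p+1}$ need not exist there (for even $p$, $\|x\|^{p+1}$ is genuinely non-smooth at $0$) and the segment $[x_1,x_2]$ may contain $0$. I would get around this by first showing, for any $x\neq 0$, that integrating $F$ along the ray $s\mapsto sx$ for $s\in[t,1]$ and letting $t\to 0^+$ (using continuity of $F$ at $0$ and $F(0)=0$) yields $\|F(x)-F(0)\|\le(p+1)!\,\|x\|$. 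Then, for arbitrary $x_1,x_2$: if $0\in[x_1,x_2]$ (in particular if either point is $0$), I would split at the origin, $\|F(x_2)-F(x_1)\|\le\|F(x_2)-F(0)\|+\|F(0)-F(x_1)\|\le(p+1)!\,(\|x_1\|+\|x_2\|)=(p+1)!\,\|x_2-x_1\|$; otherwise $[x_1,x_2]\subset\E\setminus\{0\}$ and integrating $F$ along it, with $\|D^{p+1}f_{p+1}(x_1+t(x_2-x_1))[x_2-x_1]\|\le(p+1)!\,\|x_2-x_1\|$, gives the same bound. This proves that $D^p f_{p+1}$ is $(p+1)!$-Lipschitz, and since $C_{p,1}=(p+1)!$ is a lower bound by Theorem~\ref{theorem:lower_bound_for_holder_constant} (for every $p$, whether even or odd), the constant is optimal.
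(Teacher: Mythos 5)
Your proof is correct and follows essentially the same route as the paper: reduce to the uniform bound $\|D^{p+1}f_{p+1}(x)\|\le\max_{[-1,1]}|g_{p+1,p+1}|=(p+1)!$ via Theorem~\ref{theorem:formula_for_derivative} and Proposition~\ref{proposition:maximal_absolute_value_of_polynomial}. The only difference is that you spell out the details the paper leaves implicit (the $(p+1)$-st derivative existing only on $\E\setminus\{0\}$, and the splitting of segments through the origin), which is a welcome tightening rather than a new approach.
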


\begin{proof}\balert{
It suffices to prove that $| D^{p+1} f_{p+1}(x)[h]^{p+1} |
\leq (p+1)!$ for all $x \in \E$ and all unit $h \in \E$. By
Theorem~\ref{theorem:formula_for_derivative}, we have
$D^{p+1} f_{p+1}(x)[h]^{p+1} = g_{p+1, p+1}(\tau_h(x))$.
Since $|\tau_h(x)| \leq 1$, we obtain $| D^{p+1}
f_{p+1}(x)[h]^{p+1} | \leq \max_{[-1, 1]} | g_{p+1, p+1} |$.
The claim now follows from
Proposition~\ref{proposition:maximal_absolute_value_of_polynomial}.\qed
}\end{proof}

\section{Conclusions}

In this work, we have proved that derivatives of powers of
Euclidean norm are H\"older continuous and have obtained
explicit expressions for the corresponding H\"older
\balert{constants. We have shown that our constants are
optimal for odd derivatives and at most two times suboptimal
for the even ones. In the particular case of integer powers,
when the H\"older condition corresponds to the Lipschitz
condition, we have managed to improve our result and
obtained optimal constants in all cases. We believe that
in general it should be possible to obtain optimal
constants for even derivatives as well. However, this
seems to be a difficult problem.}

\begin{acknowledgements}
\balert{The presented results were supported by ERC Advanced
Grant 788368. We are grateful to the anonymous referees for
their attentive reading and valuable comments.}
\end{acknowledgements}


\end{document}